\newtheorem{Theorem}{Theorem}[section]
\newtheorem{Lemma}{Lemma}[section]
\newtheorem{Proposition}{Proposition}[section]
\newtheorem{Corollary}{Corollary}[section]
\theoremstyle{definition}
\theoremstyle{remark}
\newtheorem{Remark}{Remark}[section]
\numberwithin{equation}{section}
\renewcommand{\r}{\rho}
\def\i{\varepsilon}
\renewcommand{\u}{{\bf u}}
\newcommand{\R}{{\mathbb R}}
\newcommand{\Dv}{{\rm div}}
\newcommand{\Cu}{{\rm curl}}
\newcommand{\dl}{\delta}
\def\f{\frac}
\renewcommand{\O}{\Omega}
\def\ov{\overline}
\def\D{\Delta }
\def\hf1{^\f{1}{1-\xi^2}}
\def\be{\begin{equation}}
\def\en{\end{equation}}
\def\bs{\begin{split}}
\def\es{\end{split}}
\newcommand{\F}{{\mathtt F}}
\newcommand{\supp}{{\rm supp}}
\author{Xianpeng Hu}
\address{Department of Mathematics, City University of Hong Kong, Hong Kong, PRC.} \email{xianpehu@cityu.edu.hk}
\title[Hausdorff dimension of concentration]
{Hausdorff dimension of concentration for isentropic compressible Navier-Stokes equations}
\keywords{Compressible Navier-Stokes equations, weak solutions, Hausdorff dimension}
\subjclass[2010]{35A01, 76N10, 35Q30.}
\date{\today}
\begin{document}

\begin{abstract}
The concentration phenomenon of the kinetic energy, $\r|\u|^2$, associated to isentropic compressible Navier-Stokes equations, is addressed in $\R^n$ with $n=2,3$ and the adiabatic constant $\gamma\in[1,\f{n}{2}]$. Except a space-time set with Hausdorff dimension less than or equal to $\Gamma(n)+1$ with $$
\Gamma(n)=\max\left\{\gamma(n), n-\f{n\gamma}{\gamma(n)+1}\right\}\quad\textrm{and}\quad\gamma(n)=\f{n(n-1)-n\gamma}{n-\gamma},$$ no concentration phenomenon occurs.
\end{abstract}

\maketitle

\section{Introduction}

In his pioneering work \cite{PL}, Lions establishes the global-in-time existence of weak solutions with finite energy for $n-$dimensional isentropic compressible Navier-Stokes equations
\begin{equation}\label{e1}
\begin{split}
&\partial_t\r+\Dv(\r\u)=0\\
&\partial_t(\r\u)+\Dv(\r\u\otimes\u)-\mu\D\u-\xi\nabla\Dv\u+\nabla\r^\gamma=0\\
&(\r,\u)|_{t=0}=(\r_0,\u_0)
\end{split}
\end{equation}
for $\gamma\ge\f32$ as $n=2$ and $\gamma\ge \f95$ as $n=3$, where $\mu>0$, $\mu+\xi>0$, the nonnegative scalar function $\r(x,t)$ stands for the density, and the vector-valued function $\u(x,t)\in \R^n$ is the velocity of the fluid. The weak solutions in \cite{PL} satisfy the energy law of \eqref{e1}
\begin{equation}\label{energy1}
\begin{split}
&E(\r(t),\u(t))+\int_0^t\int_{\R^n}\Big(\mu|\nabla\u|^2+\xi|\Dv\u|^2\Big)dxds\le E(\r_0,\u_0)
\end{split}
\end{equation}
with
\begin{equation*}
E(\r,\u)=
\begin{cases}
\f12\|\r|\u|^2\|_{L^1(\R^n)}+\|\r\|_{L^1(\R^n)}+\f{1}{\gamma-1}\|\r\|_{L^\gamma}^\gamma\quad&\textrm{if}\quad \gamma>1\\
\f12\|\r|\u|^2\|_{L^1(\R^n)}+\|\r\|_{L^1(\R^n)}+\|\r\ln\r\|_{L^1}\quad&\textrm{if}\quad \gamma=1.
\end{cases}
\end{equation*}
The existence result in \cite{PL} was later improved by Feireisl, Novotn\'{y} and Petzeltov\'{a} in \cite{FNP} to allow $\gamma>\f{n}{2}$ (see also \cite{BD, ER, F, NP}). One of the key ingredients in \cite{PL, FNP} is the higher integrability of the density due to the elliptic structure of the pressure $P(\r)=\r^\gamma$. Indeed, Lions used the elliptic structure of the pressure to improve the integrability of the density as
\begin{equation}\label{e1a}
\r(x,t)\in L_{loc}^q(\R^n\times\R^+)\quad\textrm{with}\quad q=\gamma+\f{2}{n}\gamma-1.
\end{equation}
The improved integrability \eqref{e1a} of $\rho$ fails as $\gamma\in[1,n/2]$. The energy estimate \eqref{energy1} implies that the concentration of the kinetic energy $\r|\u|^2$ never happens and the key issue for the construction of weak solutions for \eqref{e1} is the possible oscillation of the pressure term $P(\r)=\r^\gamma$ as $\gamma>n/2$. With the aid of \eqref{e1a}, the latter issue had been elegantly overcome by a convex argument in \cite{PL, FNP} in the framework of renormalised solutions. Since then whether those results in \cite{PL, FNP} still hold true for $\gamma\in[1,n/2]$ remains an outstanding open question which was listed among other unsolved problems of fluid mechanics in \cite{PL1}. 

As $\gamma\in[1,n/2]$, the main difficulty\footnote{As an example, quite different from the case as $\gamma>1$, the case $\gamma=1$ does not arise any oscillation issue for the pressure since it is a linear function of the density.} now turns to the possible concentration of the convective term, $\r\u\otimes\u$ and there naturally raises the question
\begin{equation}\tag{$\mathcal{Q}$}
\begin{split}
&\textit{How large the concentration set of the kinetic energy } \r|\u|^2\\
&\textit{is if only a priori energy estimate \eqref{energy1} is allowed?}
\end{split}
\end{equation}
This question is the core for the global-in-time existence of weak solutions with finite energy to \eqref{e1} as $\gamma\in [1,n/2]$, see \cite{SP, SP1, MP, MP1, PW}.  Note that as $\gamma>n/2$, the improved integrability \eqref{e1a} of the density and the \textit{a priori} energy estimate \eqref{energy1} ensure that $\r|\u|^2$ is bounded in $L^p_{loc}(\R^n\times\R^+)$ for some $p>1$, and this estimate prohibits the possibility of concentrations; while as $\gamma\in[1,n/2]$, energy estimates \eqref{energy1} imply that the kinetic energy $\r|\u|^2$ is merely bounded in $L^\infty(0,\infty; L^1(\R^n))$ which is not suitable for the weak convergence due to the non-reflexivity of $L^1$ and hence induces a possibility of a concentration phenomenon for the convective term $\r\u\otimes\u$.

A number of efforts have been paid on this subject for both the time-discretizied and the steady counterparts of \eqref{e1} as $\gamma\in[1,n/2]$. For the time-discretizied version of \eqref{e1}, the concentration-cancellation occurs in dimensions two (see section 6.6 in \cite{PL}), and in \cite{PS}, the concentration set has been verified to be $(\mathcal{H}^1, 1)$ rectifiable in dimensions three, where $\mathcal{H}^1$ stands for the one dimensional Hausdorff measure. For the steady version of \eqref{e1} as $\gamma=1$,  the concentration cancellation in dimensions two has been verified in the work \cite{FSW} due to a potential type estimate of the density (see Lemma 3.1 in \cite{FSW}), see also the most recent improvement along this direction for three dimensional cases in \cite{NP1, PW1}. However the size of the concentration set of \eqref{e1}, that is the question $\mathcal{Q}$, still remains an outstanding open question in multi-dimensional spaces.

In \cite{PL}, based on his concentration-compactness lemma, Lions introduced the weak star defect measure $\nu$ associated with the kinetic energy $\r|\u|^2$ for the time-discretized case to establish the concentration cancellation, but it is hard to apply this strategy to the time-dependent case \eqref{e1}. Indeed even through we know for the time-discretizied case in dimensions two, the concentration set of $\r|\u|^2$ is a countable union of delta mass (see Lemma 6.1 or Lemma 6.2 in \cite{PL}), what's the behaviour of the density away from these concentration points is still not clear. In this paper, we aim at the Hausdorff dimension of the concentration set of the kinetic energy $\r|\u|^2$ and also the property of the density on sets where the concentration does not occur. Moreover, as a byproduct, it will be verified that whenever the Hausdorff dimension of the concentration set is strictly less than one, no concentration occurs for the kinetic energy.

More precisely, this paper aims to provide a quantitative answer to the question ($\mathcal{Q}$) in $\R^n$ as $\gamma\in[1,n/2]$. One of our main tools in assessing the size of energy concentrations is the reduced defect measure $\theta$ associated with an arbitrary $H^1$ weakly convergent sequence $\u^\i (\u^\i\rightarrow\u)$ and a sequence of the nonnegative weights $\r^\i$ through the definition
$$\theta(E)=\limsup_{\i\rightarrow 0}\int_E\r^\i|\u^\i-\u|^2 dx$$
where $E$ is a Borel subset of $\R^n\times\R^+$. The set function $\theta$ is a nonnegative outer measure which vanishes precisely on these sets $E$ where $\sqrt{\r^\i}(\u^\i-\u)$ converges strongly to $0$; $\theta$ is concentrated on the $L^2$ exceptional sets of the sequences $\sqrt{\r^\i}(\u^\i-\u)$ where convergence is weak but not strong.

The first main result in this paper is an estimate for the Hausdorff dimension of the concentration set in terms of the reduced defect measure. In order to state this theorem, we recall that the weight which $\alpha-$order spherical Hausdorff measure assigns to a set is expressed as a limit of premeasures
$$\mathcal{H}^\alpha(E)=\lim_{r\rightarrow 0}\mathcal{H}^\alpha_r(E)=\sup_{r>0}\mathcal{H}^\alpha_r(E)$$
with the premeasure $\mathcal{H}_r^\alpha(E)$ given by
$$\mathcal{H}_r^\alpha(E)=\inf\Big\{\sum r_j^{\alpha}: E\subset \cup_{j=1}^\infty B_j, \quad 0<r_j\le r\Big\}.$$
Here $\{B_j\}$ is a countable covering of $E$ by open balls. The weight which $(\alpha, \beta)-$order cylindrical Hausdorff measure assigns to a space-time set is expressed as a limit of premeasures
$$\mathcal{H}^{\alpha, \beta}(E)=\lim_{r\rightarrow 0}\mathcal{H}^{\alpha,\beta}_r(E)=\sup_{r>0}\mathcal{H}^{\alpha,\beta}_r(E)$$
with the premeasure $\mathcal{H}^{\alpha, \beta}(E)$ being determined by the most efficient countable cover with cylinders $C(r_j, h_j)$ of height $h_j$ and sectional radius $r_j$:
$$\mathcal{H}_r^{\alpha, \beta}(E)=\inf\Big\{\sum r_j^\alpha h_j^\beta: E\subset \bigcup C(r_j, h_j),\quad 0\le r_j, h_j\le r\Big\}.$$ 
In general the cylindrical Hausdorff measure of order $(\alpha, \beta)$ is linked with the spherical Hausdorff measure of order $\alpha+\beta$ via
\begin{equation}\label{sc}
\mathcal{H}^{\alpha,\beta}(E)\le C(n)\mathcal{H}^{\alpha+\beta}(E).
\end{equation}
The main contribution of this paper reads as follows.
\begin{Theorem}\label{mt2}
Assume that $\gamma\in [1, n/2]$ and $(\r^\i,\u^\i)$ is a solution sequence (or an approximate solution sequence) of \eqref{e1} which satisfies the energy estimate \eqref{energy1} uniformly in $\i$. Then the reduced defect measure $\theta$ of a subsequence is concentrated inside a space-time set $E$ of Hausdorff dimension less than or equal to $\Gamma(n)+1$, i.e.
$$\mathcal{H}^{\Gamma(n)+\alpha, 1+\beta}(E)=0\quad\textrm{for all}\quad \alpha, \beta>0$$
with
$$\Gamma(n)=\max\left\{\gamma(n), n-\f{n\gamma}{\gamma(n)+1}\right\}\quad\textrm{and}\quad\gamma(n)=\f{n(n-1)-n\gamma}{n-\gamma}.$$
Moreover in the complement set of $E$, the weak limit $(\r,\u)$ of $(\r^\i,\u^\i)$ is a weak solution of \eqref{e1} in the sense of distributions and $\r^\i$ converges strongly to $\r$ in $L^1(E^c)$.
\end{Theorem}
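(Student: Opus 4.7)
The plan is to reduce the Hausdorff-dimension bound to a single scale-invariant local estimate on the defect measure $\theta$, and then run a covering argument. First, from the energy inequality \eqref{energy1} I extract the uniform a priori bounds
\[
\|\sqrt{\r^\i}\,\u^\i\|_{L^\infty(0,\infty;L^2(\R^n))}+\|\u^\i\|_{L^2(0,\infty;H^1(\R^n))}+\|\r^\i\|_{L^\infty(0,\infty;L^\gamma\cap L^1)}\le C,
\]
and the Sobolev embedding $\u^\i\in L^2(0,\infty;L^{2n/(n-2)})$ for $n\ge 3$ (any $L^q$ for $n=2$). These are the only ingredients available in the range $\gamma\in[1,n/2]$, since the Lions--Feireisl higher integrability \eqref{e1a} breaks down here.

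The heart of the argument is to establish the local bound
\[
\theta(C(r,h))\le C\,r^{\Gamma(n)}\,h\qquad\text{for every cylinder }C(r,h).
\]
I plan to obtain it by starting from the pointwise factorisation $\r^\i|\u^\i-\u|^2=(\sqrt{\r^\i}(\u^\i-\u))^{2-s}(\r^\i)^{s/2}|\u^\i-\u|^s$ with a free parameter $s\in(0,2)$, distributing the three factors via generalised H\"older using the weighted bound $\sqrt{\r^\i}(\u^\i-\u)\in L^\infty_tL^2_x$, the density bound $\r^\i\in L^\infty_tL^\gamma_x$, and a Sobolev--Poincar\'e estimate $\|\u^\i-\u\|_{L^{2^*}(B_r)}\lesssim\|\nabla(\u^\i-\u)\|_{L^2(B_r)}$. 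Optimising the resulting spatial $r$-exponent in $s$ yields the two candidates inside the $\max$ defining $\Gamma(n)$: one value of $s$ favours the raw Sobolev gain (giving $\gamma(n)$), the other spends more of the Hölder budget on $\r^\i\in L^\gamma$ (giving $n-n\gamma/(\gamma(n)+1)$). Integration in time over the interval $I_h$ of length $h$ then contributes only the linear factor $h$, since $\r^\i|\u^\i|^2$ is bounded merely in $L^\infty_tL^1_x$ with no extra gain in the time variable.

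With the local bound in hand, the Hausdorff estimate follows from a standard Vitali covering argument: fixing $\alpha,\beta>0$, on any Vitali selection of small cylinders $C_j=C(r_j,h_j)$ with $r_j,h_j\le r_0$ covering $E$ and along which $\theta(C_j)$ is comparable to $r_j^{\Gamma(n)}h_j$, one has
\[
\mathcal{H}^{\Gamma(n)+\alpha,1+\beta}_{r_0}(E)\le\sum_j r_j^{\Gamma(n)+\alpha}h_j^{1+\beta}\le r_0^{\alpha+\beta}\sum_j r_j^{\Gamma(n)}h_j\le C\,r_0^{\alpha+\beta}\,\theta(\R^n\times\R^+)\to 0,
\]
as $r_0\to 0$. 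Finally, on the complement $E^c$ the vanishing of $\theta$ gives $\sqrt{\r^\i}(\u^\i-\u)\to 0$ strongly in $L^2_{loc}$; hence $\r^\i\u^\i\otimes\u^\i\to\r\u\otimes\u$ in $\mathcal{D}'(E^c)$, and combining this with the Lions--Feireisl effective-viscous-flux identity and DiPerna--Lions renormalisation on open subsets of $E^c$ yields strong $L^1$-convergence of $\r^\i$ and the passage to the limit in \eqref{e1}. The principal obstacle is the sharp interpolation in the middle step: because $\gamma\le n/2$, the natural H\"older pair $(\gamma,\gamma/(\gamma-1))$ drives $\u$ beyond the Sobolev exponent $2^*$, so the weight $\sqrt{\r^\i}\u^\i\in L^\infty_tL^2_x$ has to be brought back into the estimate, and the resulting two-parameter optimisation over the interpolation weight is what produces the max defining $\Gamma(n)$.
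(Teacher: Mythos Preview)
Your proposed local bound $\theta(C(r,h))\le C\,r^{\Gamma(n)}h$ cannot be obtained from the interpolation you describe. Take $n=3$, $\gamma=3/2$ (so $\Gamma(n)=1$): writing $\r|v|^2=(\sqrt{\r}\,v)^{2-s}\r^{s/2}|v|^s$ and using $\sqrt{\r}\,v\in L^2$, $\r\in L^{3/2}$, $v\in L^6$ forces the H\"older exponents $\tfrac{2-s}{2}+\tfrac{s}{3}+\tfrac{s}{6}=1$ to saturate exactly for every $s\in(0,2)$, leaving no room for an $r$-factor. For $\gamma<n/2$ the sum exceeds $1$ and the triple H\"older does not even close; lowering the density exponent via $\|\r\|_{L^p(B_r)}\le |B_r|^{1/p-1/\gamma}\|\r\|_{L^\gamma}$ only produces negative powers of $r$. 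The only bound that survives is the trivial $\theta(C(r,h))\le Ch$ from $\r|\u|^2\in L^\infty_tL^1_x$, which carries no spatial decay when $\Gamma(n)>0$. This is not an accident: the entire difficulty in the range $\gamma\le n/2$ is precisely that the kinetic energy can concentrate spatially, so a uniform decay estimate of the type you want cannot hold at every point.

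Even granting the local bound, your covering step is circular. To get $\sum_j r_j^{\Gamma(n)}h_j\le C\,\theta(\R^n\times\R^+)$ you need a \emph{lower} bound $\theta(C_j)\ge c\,r_j^{\Gamma(n)}h_j$ on cylinders meeting $E$ (not the upper bound you proved), together with superadditivity of $\theta$ on the disjoint Vitali family; but $\theta$ is a $\limsup$ and is only subadditive. The paper's route is completely different and does not pass through a density estimate for $\theta$ at all: the exceptional set is built \emph{a priori} as the set where the density maximal function $\mathcal{M}\r^\i(s,x,t)$ fails the decay $s^{\gamma(n)+\alpha}$, its $(\Gamma(n)+\alpha)$-premeasure is controlled via Bessel potentials $g_{n-\gamma(n)-\alpha}\star\r^\i$ and the fine-property theory of $W^{1,p}$ functions (this is where $\Gamma(n)$ actually enters, through the choice of $p$ in \eqref{in}), and the space-time version is assembled by combining this with temporal H\"older continuity of $I_1\r^\i$. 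Only then, on the \emph{complement} $G_r$, does one invoke the momentum equation, the Div-Curl structure \eqref{51}, and the embedding $W^{1,p}\hookrightarrow L^2(\r\,dx)$ of Proposition~\ref{p2} to gain the extra integrability $\r^\i\in L^{\gamma+\lambda}(G_r)$ and rule out concentration there. Your plan omits the first half of this machinery entirely and cannot recover it by interpolation.
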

\begin{Remark}
Since
$$\gamma(n)=n-\f{n}{n-\gamma}$$
and
$$n-\f{n\gamma}{\gamma(n)+1}=n-\f{n}{\f{n}{\gamma}-\f{1}{n-\gamma}},$$
the functions $\Gamma(n)$ and $\gamma(n)$ are strictly decreasing with respect to $\gamma$, and $n-2\le \gamma(n)\le\Gamma(n)<n-1$ as $\gamma\in [1,n/2]$. In particular, the cylindrical Hausdorff dimension $\Gamma(n)+1$ reads
\begin{equation*}
\Gamma(n)+1=
\begin{cases}
1 &\textrm{if}\quad n=2 \quad\textrm{and}\quad \gamma=1\\
2 &\textrm{if}\quad n=3 \quad\textrm{and}\quad \gamma=3/2\\
14/5 &\textrm{if}\quad n=3 \quad\textrm{and}\quad \gamma=1.
\end{cases}
\end{equation*}
\end{Remark}

Theorem \ref{mt2} is relied on an improved integrability of Riesz potential $I_1\r^\i$ of the density in $L^p(E^c)$ with $p>n$ and the space-time set $E^c$ is the collection of points where the maximal function of the density
$$\mathcal{M}\r(s,x)=\int_{|x-y|\le s}\r(y)dy$$ decays suitably faster, see Theorem \ref{4m}. 
In fact outside the exceptional set $E$, the maximal function of the density has the decay rate $\mathcal{M}\r(s,x)\le s^{\gamma(n)}$. Here the exponent $\gamma(n)$ is needed to improve the integrability of $I_1\r$ in $L^n$, see Lemma \ref{L52}, and the exceptional set $E$ is constructed by analysing the relation between the decay rate of the maximal function of the measure $\r dx$ and the integrability of the Bessel potential of that measure, see Theorem \ref{t3} and Theorem \ref{4m}. This idea is motivated by the work of DiPerna and Majda in \cite{DM, BM} for two dimensional incompressible Euler equations with vortex sheet initial data. The improved integrability of Riesz potential of the density in turn improves the integrability of the density itself, see Proposition \ref{p3}, outside the exceptional set $E$ due to the elliptic structure of the pressure in the momentum equation by adapting the argument in \cite{PL}. Except the elliptic structure of the pressure, the other two novel ingredients in improving the integrability of the density are the Div-Curl structure \eqref{51} where the concentration-cancellation phenomenon occurs and the improved estimate of Riesz potentials for momentum (see Lemma \ref{L51}). With the improved integrability of the density in hand, the classical convexity argument initially proposed by Lions in \cite{PL, FNP} produces the strong convergence of the density in the complement set of $E$. 

It is no surprise that the Riesz potential of the density, $I_1\r$, plays an important role in characterizing the concentration set of the kinetic energy. As an example, for the steady version of \eqref{e1} as $\gamma=1$, the entropy bound $\|\r\ln(1+\r)\|_{L^1}$ is not available, and only the total mass $\|\r\|_{L^1}$ can be assumed to be bounded. Under this situation, the Moser-Trudinger's inequality (see \cite{PL}) loses its power to control the concentration set in dimensions two. Fortunately, for the steady case, the density satisfies the following uniform potential estimate (see Lemma 3.1 in \cite{FSW})
\begin{equation}\label{pe}
\int_{B_r(x)}\f{\r}{|x-y|^\alpha}dy\le C(1+\|\r\|_{L^1}),
\end{equation}
which eliminates the concentration phenomena, where $\alpha>0$ and $B_r(x)$ stands for the ball with centre $x$ and radius $r$. The estimate \eqref{pe} indicates particularly that the density maximum function has the decay rate in the order $\alpha$, and hence there is no concentration for the steady isothermal compressible Navier-Stokes equations in dimensions two. Moreover the potential estimate \eqref{pe} also holds true both for the time-discretizied case (see Proposition 1 in \cite{PS}) and for the steady case \cite{NP1, PW1} in dimensions three. It is therefore natural to conjecture that the potential estimate of the type \eqref{pe} plays a similar role on characterizing concentration sets of the kinetic energy for the system \eqref{e1}. Furthermore notice that whenever $\gamma>n/2$, the \textit{a priori} estimate \eqref{energy1} implies that $\r^\gamma\in L^1$ and hence a natural decay for the density follows from H\"{o}lder's inequality
$$\mathcal{M}\r(x,s)=\int_{B_s(x)}\r(y)dy\le C\|\r\|_{L^{\gamma}(\R^n)} s^{n(\gamma-1)/\gamma},$$
which implies that no concentration occurs for $\gamma>n/2$ since $n(\gamma-1)/\gamma>\gamma(n)$.\footnote{Note also that $\gamma(n)=n(\gamma-1)/\gamma=n-2$ as $\gamma=n/2$ and as $\gamma\in[1,n/2)$, $n(\gamma-1)/\gamma<\gamma(n).$}

Due to the unidirectional relation \eqref{sc}, the other result in this paper concerns the spherical Hausdorff dimension of the reduced defect measure.
\begin{Theorem}\label{mt1}
If the reduced defect measure is concentrated inside a space-time set with spherical Hausdorff dimension less than one, then the reduced defect measure vanishes.
\end{Theorem}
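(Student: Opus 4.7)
The plan is to establish the linear ball estimate $\theta(B_r(x_0,t_0))\le 2Mr$ on every space-time ball of radius $r<1$, for a constant $M$ depending only on the initial energy $E_0$. Once this is in hand, the conclusion follows from a routine Hausdorff covering: if $\theta$ concentrates on $E$ with spherical dimension $<1$, pick $s_0\in(\dim E,1)$ so that $\mathcal{H}^{s_0}(E)=0$, and for every $\eta>0$ take a cover $E\subset\bigcup_j B_{r_j}$ with $r_j<1$ and $\sum_j r_j^{s_0}<\eta$. Since $0<r_j<1$ and $s_0<1$ yield $r_j<r_j^{s_0}$, subadditivity of the outer measure $\theta$ gives
$$\theta(E)\le\sum_j\theta(B_{r_j})\le 2M\sum_j r_j\le 2M\sum_j r_j^{s_0}<2M\eta,$$
and letting $\eta\to 0$ yields $\theta(E)=0$, so $\theta\equiv 0$ by the concentration hypothesis.

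To get the ball estimate, since $B_r(x_0,t_0)\subset B_r(x_0)\times(t_0-r,t_0+r)$, Fubini reduces it to the uniform bound $\sup_{t,\i}\int_{\R^n}\r^\i|\u^\i-\u|^2\,dx\le M$. I would expand $\r^\i|\u^\i-\u|^2=\r^\i|\u^\i|^2-2\r^\i\u^\i\cdot\u+\r^\i|\u|^2$: the first term is bounded by $2E_0$ from \eqref{energy1}, the cross term by Cauchy--Schwarz in $L^2(\r^\i\,dx)$, so everything reduces to bounding $\int\r^\i|\u|^2\,dx$ uniformly in $t$ and $\i$. For this, H\"older with $\r^\i\in L^\infty_tL^\gamma_x$ and the Sobolev embedding $H^1(\R^n)\hookrightarrow L^{2\gamma'}(\R^n)$ (with a Trudinger--Moser substitute in the critical case $\gamma=1$, $n=2$) allows one to estimate it in terms of $\|\na\u\|_{L^2_x}^2$ pointwise in $t$.

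The principal obstacle lies in that last uniform-in-$t$ step: the weak limit $\u$ belongs a priori only to $L^2_tH^1_x$, so $\|\na\u(t)\|_{L^2_x}$ is merely in $L^2_t$, not $L^\infty_t$, and without uniform-in-$t$ control the ball estimate degrades to $r^{1-1/p}$ with $p<\infty$, which would only give the conclusion for dimensions below $1-1/p<1$. I would overcome this either by invoking lower semicontinuity of $\r\mapsto\int\r|\u|^2\,dx$ together with the weak-$\ast$ convergence $\sqrt{\r^\i}\u^\i\rightharpoonup\sqrt{\r}\u$ in $L^\infty_tL^2_x$, thereby transferring the $L^\infty_t$-bound on $\int\r^\i|\u^\i|^2\,dx$ to a bound on $\int\r^\i|\u|^2\,dx$, or by a mollification argument: approximate $\u$ by $\u_\dl\in C^\infty_c$, so that $\int\r^\i|\u_\dl|^2\,dx$ is bounded trivially by $\|\u_\dl\|_{L^\infty}^2\|\r^\i\|_{L^1}$, while the remainder $\int\r^\i|\u-\u_\dl|^2\,dx$ is shown to vanish uniformly in $\i$ by the H\"older--Sobolev machinery above.
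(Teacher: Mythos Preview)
Your overall strategy is the same as the paper's: exploit that a space-time set of spherical Hausdorff dimension $<1$ has time-projection of arbitrarily small Lebesgue measure, and combine this with the $L^\infty_t$ control on the kinetic energy to force $\theta$ to vanish. The cosmetic difference is that you package this as a uniform ball estimate $\theta(B_r)\le 2Mr$ followed by countable subadditivity over a cover, whereas the paper works directly with the single exceptional set $F_r^c$, shows $m_1(\pi F_r^c)\le \sum r_j\le r^{1-\alpha}\sum r_j^\alpha\le C r^{1-\alpha}$, and then bounds $\theta(F_r^c)\le\int_{\pi F_r^c}\big(\sup_t\int_{B_R}\r^\i|\u^\i-\u|^2\,dx\big)\,dt$. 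The paper's version avoids the overlap bookkeeping in your ball-by-ball sum.

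On the point you flag: the paper simply \emph{asserts} the bound $\max_t\int_{B_R}\r^\i|\u^\i-\u|^2\,dx\le C$ as ``the \textit{a priori} kinetic energy'' and moves on, without isolating the $\int\r^\i|\u|^2$ term you worry about. So your concern is legitimate, but neither of your proposed fixes resolves it: lower semicontinuity under $\sqrt{\r^\i}\u^\i\rightharpoonup\sqrt{\r}\u$ would bound $\int\r|\u|^2$, not $\int\r^\i|\u|^2$; and the mollification remainder $\int\r^\i|\u-\u_\dl|^2$ inherits the same $L^\infty_t$ deficit. If one does not want to take the paper's assertion for granted, the cleanest repair is to abandon the pointwise-in-$t$ ball estimate and instead argue, as the paper's framework allows, that $\int_{\pi F_r^c}\int_{B_R}\r^\i|\u|^2\,dx\,dt\to 0$ uniformly in $\i$ by absolute continuity once one has an $\i$-independent $L^1_t$ majorant for $t\mapsto\int_{B_R}\r^\i|\u|^2\,dx$; this uses only that $m_1(\pi F_r^c)\to 0$ and circumvents the need for $L^\infty_t$ control.
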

The argument of Theorem \ref{mt1} in Section 5 is based on the uniform bound of the kinetic energy $\r|\u|^2$ and indicates that the concentration set has time coordinates of positive Lebesgue measure. The time axis is singled out because of the nature of the \textit{a priori} kinetic energy bound. This argument does not rely on the information of the density, and hence it is unable to deal with the oscillation and concentration issue for the pressure.


Let us end this introduction by emphasising that throughout this paper, only the energy estimate \eqref{energy1}, which contains the $L^1$ bound of the density or the conservation of total mass, is assumed, and the result shows the concentration only occurs in the set where the density concentrates very fast.  In other words, the pointwise estimate for the decay rate of the density maximal function implies a finer structure of the concentration set for the time-dependent system \eqref{e1}. 
  
The rest of this paper is organized as follows. In Section 2, the reduced defect measure is introduced and the relation between the reduced defect measure and the compactness is investigated. In Section 3, a uniformization estimate is established in order to make the uniform space exceptional set of the kinetic energy well defined. The improved estimate of the Riesz potential of the density in the complement of the space-time exceptional set is achieved in Section 4.
Finally, in Section 5 the integrability of the density away from the space-time exceptional set is improved and the proof of Theorem \ref{mt2} is demonstrated. Theorem \ref{mt1} is also verified in Section 5. Throughout this paper, the letters $\alpha, \beta,\lambda$ denote small but strictly positive parameters.

\bigskip\bigskip


\section{Reduced defect Measures and Compactness}

This section aims at the measurement of the concentration set of $\r^\i|\u^\i-\u|^2$ when both $\r^\i$, $\u^\i$, and $\u$ are functions of the spatial variables only and $\u^\i$ converges weakly to $\u$ in $H^1(\R^n)$.

Recall that in dimensions two for any $\r^\i\ge 0$ with $\|\r^\i\ln(1+\r^\i)\|_{L^1}<\infty$, there holds $\r^\i|\u^\i-\u|^2\in L^1_{loc}$ due to the inequality
\begin{equation}\label{20}
\r^\i|\u^\i-\u|^2\le \lambda\r^\i\ln(1+\r^\i)+\lambda\exp\left(\f{|\u^\i-\u|^2}{\lambda}-1\right)\quad\textrm{for all}\quad \lambda>0,
\end{equation}
and the Moser-Trudinger's inequality, $\exp(c|\u^\i-\u|^2)<\infty$, for some $c>0$ if $\u^\i-\u\in H^1(\O)$ with $\O\Subset\R^2$.
Based on the inequality \eqref{20}, Lions introduces the weak star defect measure in the framework of concentration-compactness which is the Radon measure $\nu$ so that
$$\r^\i|\u^\i-\u|^2\rightarrow \nu\quad \textrm{weak star},$$
and proves
(see Lemma 6.1 in \cite{PL}) that
\begin{equation}\label{2}
\nu=\sum_{i\in I}\nu_i\dl_{x_i}\quad \textrm{with} \quad\sum_{i\in I}\nu_i^{\f12}<\infty,
\end{equation}
where $I$ is an at most countable set, distinct points $\{x_i\}_{i\in I}\subset\R^2$, and positive constants $\{\nu_i\}_{i\in I}\subset\R^+.$ Moreover in terms of \eqref{20}, Lions's argument (Lemma 6.2 in \cite{PL}) shows that the concentration only occurs in the set where the embedding $H^1(\R^2)\mapsto e^{|\u^\i-\u|^2}$ concentrates.

In general, however, the vanishing of $\nu$ in \eqref{2} is not equivalent to the strong convergence of $\sqrt{\r^\i}(\u^\i-\u)$ in $L^2$. This observation motivates us to investigate the property of $\r^\i$ on the set where the weak star measure $\nu$ concentrates.

\subsection{Reduced defect measure and density maximal function}
We define the reduced defect measure $\theta$ as
$$\theta(E)=\limsup_{\i\rightarrow 0}\int_{E}\r^\i|\u^\i-\u|^2dx\quad \textrm{for every}\quad E\subset\R^n.$$
Generally speaking, due to its non-negativity, the density function $\r^\i$ in the definition of $\theta$ is treated as a weight of the integral, and the property of $\theta$ highly depends on the decay rate of the function $\r^\i$ (\cite{VM}).

An important consequence from this definition is
\begin{equation}\label{0a}
\theta(E)=0\Longleftrightarrow \sqrt{\r^\i}(\u^\i-\u)\rightarrow 0\quad\textrm{strongly in}\quad L^2(E).
\end{equation}
The partial relation between the weak star defect measure and the reduced defect measure is stated in the following proposition.
\begin{Proposition}
If $E$ is a closed set, then there hold
\begin{subequations}
\begin{align}
&\nu(E)=0\Longrightarrow \lim_{\i\rightarrow 0}\int_E\r^\i|\u^\i-\u|^2dx=0;\label{a}\\
&\nu(E)\ge \theta(E).\label{b}
\end{align}
\end{subequations}
\end{Proposition}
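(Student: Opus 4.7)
The plan is to prove (b) directly by a portmanteau-type cutoff argument and then deduce (a) as an immediate consequence, since the integrand $\rho^\varepsilon|\mathbf{u}^\varepsilon-\mathbf{u}|^2$ is nonnegative. Throughout, I will exploit three ingredients: the outer regularity of the Radon measure $\nu$, the definition of weak-star convergence of the sequence of measures $\mu^\varepsilon:=\rho^\varepsilon|\mathbf{u}^\varepsilon-\mathbf{u}|^2\,dx$ to $\nu$ against test functions in $C_c(\mathbb{R}^n)$, and a uniform $L^1$ tail bound inherited from the energy estimate \eqref{energy1}.

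\textbf{Proof of (b).} Fix $\delta>0$. First restrict to a compact piece: choose $R$ so large that $\limsup_{\varepsilon\to 0}\int_{E\setminus \overline{B_R}}\rho^\varepsilon|\mathbf{u}^\varepsilon-\mathbf{u}|^2\,dx<\delta$; this is possible because $\mathbf{u}^\varepsilon-\mathbf{u}\to 0$ in $L^2_{loc}$ (by Rellich), $\rho^\varepsilon$ is uniformly bounded in $L^1$, and the kinetic energy is uniformly integrable at infinity from \eqref{energy1}. Set $K:=E\cap\overline{B_R}$, a compact set. By outer regularity of $\nu$, pick an open set $U\supset K$ with $\nu(U)<\nu(K)+\delta\le\nu(E)+\delta$, and by Urysohn's lemma choose $\phi\in C_c(U)$ with $0\le\phi\le 1$ and $\phi\equiv 1$ on $K$. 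Then
\begin{equation*}
\int_K\rho^\varepsilon|\mathbf{u}^\varepsilon-\mathbf{u}|^2\,dx\le\int_{\mathbb{R}^n}\phi\,\rho^\varepsilon|\mathbf{u}^\varepsilon-\mathbf{u}|^2\,dx.
\end{equation*}
Passing to the $\limsup$ as $\varepsilon\to 0$ and using the weak-star convergence $\mu^\varepsilon\to\nu$ tested against $\phi\in C_c$, the right-hand side converges to $\int\phi\,d\nu\le\nu(U)<\nu(E)+\delta$. Combining with the tail estimate gives
\begin{equation*}
\limsup_{\varepsilon\to 0}\int_E\rho^\varepsilon|\mathbf{u}^\varepsilon-\mathbf{u}|^2\,dx\le\nu(E)+2\delta.
\end{equation*}
Since $\delta>0$ is arbitrary, the left side is bounded by $\nu(E)$, which is precisely $\theta(E)\le\nu(E)$, i.e.\ (b).

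\textbf{Proof of (a).} Assume $\nu(E)=0$. Since $\rho^\varepsilon|\mathbf{u}^\varepsilon-\mathbf{u}|^2\ge 0$, the inferior limit of $\int_E\rho^\varepsilon|\mathbf{u}^\varepsilon-\mathbf{u}|^2\,dx$ is nonnegative, while by (b) its superior limit is bounded by $\nu(E)=0$. Hence the sequence converges and its limit is zero, which is (a).

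\textbf{Main obstacle.} The conceptual content of the proof is standard once it is recognized as the upper semicontinuity side of the portmanteau theorem for weak-star convergent Radon measures on a locally compact Hausdorff space. The only subtle point is that $E$ is closed but possibly unbounded in $\mathbb{R}^n$, so a plain Urysohn cutoff does not apply globally; this is handled by the tail estimate in the first step, which reduces matters to the compact set $K=E\cap\overline{B_R}$. The tail estimate itself is not automatic from weak-star convergence and is where the uniform energy bound \eqref{energy1} and weak $H^1$ convergence of $\mathbf{u}^\varepsilon$ are invoked.
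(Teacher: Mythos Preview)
Your argument is essentially the paper's: establish (b) first via outer regularity of $\nu$, a Urysohn cutoff, and weak-star convergence against a continuous test function, then deduce (a) from (b) together with nonnegativity of the integrand. The paper applies Urysohn directly to the closed set $E$ inside the open $G$ without first truncating to a compact piece, so your tail-cutting step is an extra precaution not present there.

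One caveat on that extra step: the justification you give for the tail bound is not quite right. The energy estimate \eqref{energy1} yields a uniform $L^1(\mathbb{R}^n)$ bound on $\rho^\varepsilon|\mathbf{u}^\varepsilon|^2$, but a uniform $L^1$ bound does not by itself give tightness (uniform smallness of $\int_{|x|>R}$), and the Rellich argument you invoke concerns only local behavior, not tails. So ``uniform integrability at infinity from \eqref{energy1}'' is asserted rather than proved. The paper sidesteps this issue by tacitly assuming a $C_0$ cutoff separating $E$ from $G^c$ exists, which is unproblematic when $E$ is compact---the situation actually used downstream. If you want your version to be fully rigorous for unbounded closed $E$, you would need an additional tightness hypothesis on the sequence $\rho^\varepsilon|\mathbf{u}^\varepsilon-\mathbf{u}|^2$, not just the global energy bound.
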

\begin{proof}
Equation \eqref{a} follows easily from relations \eqref{0a} and \eqref{b}, and we are left to prove \eqref{b}. Because $\nu$ is a Radon measure, $\nu$ is outer regular. Thus for any $\dl>0$, there exists an open set $G$ with $E\subset G$ and
$$\nu(E)+\dl\ge \nu(G).$$
By Urysohn's lemma, there exists $\psi\in C_0(\R^n)$, $0\le \psi\le 1$, and
\begin{equation}\label{u}
\psi(x)=
\begin{cases}
1, \quad x\in E\\
0, \quad x\in G^c
\end{cases}.
\end{equation}
There follows
\begin{equation*}
\begin{split}
\nu(E)+\dl&\ge \nu(G)\ge \int_G \psi d\nu\\
&=\lim_{\i\rightarrow 0}\int_G\psi\r^\i|\u^\i-\u|^2dx\\
&\ge\limsup_{\i\rightarrow 0}\int_E \psi\r^\i|\u^\i-\u|^2dx=\theta(E),
\end{split}
\end{equation*}
and letting $\dl\rightarrow 0$ yields the desired.
\end{proof}

We introduce the density maximal function $\mathcal{M}\r(s,x)$ of the density $\r(x)\ge 0$ as 
$$\mathcal{M}\r(s,x)=\int_{|x-y|\le s}\r(y)dy=\int_{B_s(x)}\r (y)dy,$$ where $B_s(x)$ stands for the ball with radius $s$ and centre at $x$. The density decays at every point of $\R^n$ with a power rate.

\begin{Proposition}\label{p1}
\begin{itemize}
\item For $\gamma>1$, the density maximal function $\mathcal{M}\r(s,x)$ has the decay rate
$$\mathcal{M}\r(s,x)\le C(n) \|\r\|_{L^\gamma(\R^n)}s^{n(\gamma-1)/\gamma}$$
for all $x\in\R^n$ and $s>0$.
\item For $\gamma=1$, the density maximal function $\mathcal{M}\r(s,x)$ has the decay rate
$$\mathcal{M}\r(s,x)\le C(n)(\ln s^{-1})^{-1}$$ for all $x\in\R^n$ and $s\in (0,1)$, where $C(n)$ depends only on the quantity $\|\r\ln(1+\r)\|_{L^1(\R^n)}$.
\end{itemize}
\end{Proposition}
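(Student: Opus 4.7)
\medskip

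\noindent\textbf{Proof proposal for Proposition \ref{p1}.}

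The plan is to treat the two regimes separately, as they rely on different tools.

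For the case $\gamma>1$, the argument is a direct application of H\"older's inequality. Writing the integral as a pairing of $\rho$ against the indicator of $B_s(x)$ and using the conjugate exponent $\gamma/(\gamma-1)$, one obtains
\begin{equation*}
\mathcal{M}\rho(s,x)=\int_{B_s(x)}\rho(y)\,dy\le \|\rho\|_{L^\gamma(\R^n)}|B_s(x)|^{(\gamma-1)/\gamma}=C(n)\|\rho\|_{L^\gamma(\R^n)}s^{n(\gamma-1)/\gamma},
\end{equation*}
which is the desired decay rate. This step is routine.

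For the case $\gamma=1$, H\"older is no longer available and one must exploit the Orlicz-type control $\|\rho\ln(1+\rho)\|_{L^1}<\infty$. The key idea is a truncation at a height $M>0$ to be chosen: split
\begin{equation*}
\int_{B_s(x)}\rho\,dy=\int_{B_s(x)\cap\{\rho\le M\}}\rho\,dy+\int_{B_s(x)\cap\{\rho>M\}}\rho\,dy.
\end{equation*}
The first integral is bounded by $M|B_s(x)|=C(n)Ms^n$. For the second, on the set $\{\rho>M\}$ one has the elementary pointwise inequality $\rho\le \rho\ln(1+\rho)/\ln(1+M)$, which yields
\begin{equation*}
\int_{B_s(x)\cap\{\rho>M\}}\rho\,dy\le \frac{\|\rho\ln(1+\rho)\|_{L^1(\R^n)}}{\ln(1+M)}.
\end{equation*}
Combining the two estimates gives
\begin{equation*}
\mathcal{M}\rho(s,x)\le C(n)Ms^n+\frac{\|\rho\ln(1+\rho)\|_{L^1(\R^n)}}{\ln(1+M)}.
\end{equation*}

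The final step is to optimise in $M$. Choosing $M=s^{-n}(\ln s^{-1})^{-1}$, valid for $s\in(0,1)$ sufficiently small, the first term becomes $C(n)(\ln s^{-1})^{-1}$, while for small $s$ one has $\ln(1+M)\sim n\ln s^{-1}$, so the second term is also controlled by $C(n)\|\rho\ln(1+\rho)\|_{L^1}(\ln s^{-1})^{-1}$. Putting these together produces the claimed bound $\mathcal{M}\rho(s,x)\le C(n)(\ln s^{-1})^{-1}$, where the constant absorbs the quantity $\|\rho\ln(1+\rho)\|_{L^1}$. The only mild subtlety is the calibration of $M$ so that both terms decay like $(\ln s^{-1})^{-1}$ simultaneously; this is the heart of the argument but is standard once the truncation is set up.
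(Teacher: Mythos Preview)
Your argument is correct. For $\gamma>1$ it coincides with the paper's proof, which is simply H\"older's inequality.

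For $\gamma=1$ your route is genuinely different from the paper's. The paper does not truncate; instead it inserts the logarithmic weight $\ln^{-}|x-y|=\ln(|x-y|^{-1})$ and uses the Young/Orlicz inequality $ab\le a\ln(1+a)+e^{b-1}$ (this is the inequality \eqref{20} with $\lambda=1$) pointwise with $a=\rho(y)$ and $b=\ln^{-}|x-y|$. Since $\ln^{-}|x-y|\ge \ln s^{-1}$ on $B_s(x)$ and $e^{\ln^{-}|x-y|}=|x-y|^{-1}$ is integrable on a ball, this immediately gives
\[
(\ln s^{-1})\,\mathcal{M}\rho(s,x)\le \|\rho\ln(1+\rho)\|_{L^1}+e^{-1}\!\int_{B_s(x)}|x-y|^{-1}\,dy\le \|\rho\ln(1+\rho)\|_{L^1}+C(n).
\]
Your truncation--and--optimisation argument reaches the same conclusion by a more elementary path that avoids any Orlicz duality; the paper's approach is slicker and has the side benefit that the intermediate quantity $\int_{B_s(x)}\ln^{-}|x-y|\,\rho(y)\,dy$ (essentially the $2$D Newtonian potential) is exactly what is refined in the next lemma (estimate \eqref{21}). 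One small remark: your optimisation is written for $s$ sufficiently small, but the full range $s\in(0,1)$ follows trivially since $(\ln s^{-1})^{-1}$ is bounded below on any interval $[s_0,1)$ while $\mathcal{M}\rho(s,x)$ is bounded above by $C(n)+\|\rho\ln(1+\rho)\|_{L^1}/\ln 2$ (take $M=1$ in your own splitting).
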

\begin{proof}
The decay estimate for $\mathcal{M}\r(s,x)$ as $\gamma>1$ follows directly from H\"{o}lder's inequality. For $\gamma=1$,
set $\ln^{-}(s)=-\ln\min\{s,1\}$. For $s\in (0,1)$ we deduce from \eqref{20} that
\begin{equation*}
\begin{split}
(\ln s^{-1})\mathcal{M}\r(s,x)&\le \int_{|x-y|\le s}\ln^{-}|x-y| \r(y) dy\\
&\le \int_{|x-y|\le s}\r\ln(1+\r)dy+e^{-1}\int_{|x-y|\le s} e^{\ln^{-}|x-y|}dy\\
&\le \|\r\ln(1+\r)\|_{L^1}+e^{-1}\int_{|x-y|\le s}|x-y|^{-1}dy\\
&\le \|\r\ln(1+\r)\|_{L^1}+C(n) e^{-1}
\end{split}
\end{equation*}
as claimed.
\end{proof}

A slightly higher decay rate of the density maximal function yields the smoothness of the Newtonian potential of the density.
\begin{Lemma}
\begin{itemize}
\item For $n=2$, a slightly higher decay rate as
\begin{equation}\label{21a}
\mathcal{M}\r(s,x)\le (\ln s^{-1})^{-\beta} \quad\textrm{with}\quad \beta>1
\end{equation}
guarantees that
\begin{equation}\label{21}
\int_{|x-y|\le s}\ln^{-}|x-y| \r(y) dy\le \Big(1+1/(\beta-1)\Big)(\ln s^{-1})^{1-\beta}.
\end{equation}
\item For $n=3$, a slightly higher decay rate as
\begin{equation}\label{21aa}
\mathcal{M}\r(s,x)\le s^{\beta} \quad\textrm{with}\quad \beta>1
\end{equation}
also guarantees that
\begin{equation}\label{21aa1}
\int_{|x-y|\le s}|x-y|^{-1} \r(y) dy\le \Big(1+1/(\beta-1)\Big)s^{\beta-1}.
\end{equation}
\end{itemize}
\end{Lemma}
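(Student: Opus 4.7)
The plan is to recognize both integrals as having the common form $\int_{B_s(x)} K(|x-y|)\rho(y)\,dy$ where $K$ is a singular, nonincreasing kernel ($K(r)=\ln(1/r)$ when $n=2$ and $K(r)=1/r$ when $n=3$), and to handle them uniformly by integration by parts against the cumulative mass $M(r):=\mathcal{M}\r(r,x)$. The hypotheses \eqref{21a} and \eqref{21aa} give pointwise bounds on $M(r)$, so once the integration by parts reduces everything to integrals of $M(r)$ weighted by $|K'(r)|$, the proof becomes a one-line elementary computation in each dimension.

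First I would write, for any nonincreasing $C^1$ kernel $K$ on $(0,s]$,
\begin{equation*}
\int_{B_s(x)}K(|x-y|)\r(y)\,dy=\int_0^s K(r)\,dM(r),
\end{equation*}
which is just the layer-cake expression of the left-hand side with respect to the monotone function $M$. Using $K(r)=K(s)+\int_r^s|K'(t)|\,dt$ and Fubini in the resulting double integral yields the key identity
\begin{equation*}
\int_{B_s(x)}K(|x-y|)\r(y)\,dy=K(s)M(s)+\int_0^s|K'(t)|\,M(t)\,dt.
\end{equation*}
This holds rigorously because $M$ is nondecreasing and continuous from the right, so the Lebesgue--Stieltjes manipulations are legitimate, and the boundary term at $r=0$ vanishes provided $K(r)M(r)\to 0$, which is guaranteed by the decay hypotheses ($M(r)\lesssim r^\beta$ kills $1/r$; $M(r)\lesssim(\ln 1/r)^{-\beta}$ kills $\ln(1/r)$ since $\beta>1$).

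Next I would substitute the two concrete kernels. For $n=3$, $K(r)=1/r$ and $|K'(t)|=1/t^2$, so inserting $M(t)\le t^\beta$ gives
\begin{equation*}
\int_{B_s(x)}\frac{\r(y)}{|x-y|}dy\le\frac{s^\beta}{s}+\int_0^s\frac{t^\beta}{t^2}dt=s^{\beta-1}+\frac{s^{\beta-1}}{\beta-1},
\end{equation*}
which is exactly \eqref{21aa1}. For $n=2$, $K(r)=\ln(1/r)$ and $|K'(t)|=1/t$, so with $M(t)\le(\ln 1/t)^{-\beta}$,
\begin{equation*}
\int_{B_s(x)}\ln^{-}|x-y|\,\r(y)\,dy\le(\ln 1/s)^{1-\beta}+\int_0^s\frac{(\ln 1/t)^{-\beta}}{t}\,dt,
\end{equation*}
and the substitution $u=\ln(1/t)$, $du=-dt/t$, evaluates the remaining integral to $(\ln 1/s)^{1-\beta}/(\beta-1)$, giving \eqref{21}.

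There is no genuine obstacle here; the only mild subtlety is ensuring the integration by parts is valid despite the singularity of $K$ at $0$, but this is handled by noticing that $\beta>1$ is precisely the threshold that makes $K(r)M(r)\to 0$ and makes $\int_0^s|K'(t)|M(t)\,dt$ finite. In both cases the factor $1+1/(\beta-1)$ comes out automatically, and the hypothesis $\beta>1$ appears naturally as the condition needed for the potential on the right-hand side to be finite.
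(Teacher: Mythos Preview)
Your proof is correct and follows essentially the same approach as the paper: both rewrite the singular integral as a Stieltjes integral $\int_0^s K(r)\,dM(r)$ against the cumulative mass $M(r)=\mathcal{M}\r(r,x)$ and integrate by parts (you phrase it via Fubini, the paper via the standard Stieltjes formula), then insert the decay hypothesis on $M$ and compute the resulting one-variable integral. Your uniform treatment of the two kernels is a minor repackaging, but the substance is identical.
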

\begin{proof}
The arguments for $n=2,3$ are basically same, and we consider $n=2$ for instance.
For \eqref{21} integration by parts gives
\begin{equation*}
\begin{split}
\int_{|x-y|\le s}\ln^{-}|x-y| \r(y) dy&=\int_0^s \ln \sigma^{-1}d\mathcal{M}\r(\sigma,x)\\
&=(\ln \sigma^{-1})\mathcal{M}\r(\sigma, x)\Big|_{\sigma=0}^{\sigma=s}+\int_0^s\sigma^{-1} \mathcal{M}\r(\sigma,x) d\sigma\\
&\le (\ln s^{-1})^{1-\beta}+\int_0^s \sigma^{-1}(\ln\sigma^{-1})^{-\beta}d\sigma\\
&= (\ln s^{-1})^{1-\beta}+\int_{\ln s^{-1}}^\infty \tau^{-\beta}d\tau\\
&\le \Big(1+1/(\beta-1)\Big)(\ln s^{-1})^{1-\beta}.
\end{split}
\end{equation*}
\end{proof}

\begin{Remark}
The H\"{o}lder-type inequality (see \cite{ER}) gives
$$ab\le a\ln^\beta(1+a)+\exp(b^{1/\beta})\quad \textrm{for all}\quad a\ge 0,$$
where $\beta>1$, and thus the higher decay rate of the density maximal function \eqref{21a} holds true whenever $\|\r\ln^\beta(1+\r)\|_{L^1}$ is finite. The higher decay rate of the density maximal function as \eqref{21a} plays an important role in the vanishing of the reduced defect measure. As we will see later on that for every set where the decay rate \eqref{21a} holds the concentration never happens.
\end{Remark}

\begin{Remark} The decay rate as $\gamma=1$ in Proposition \ref{p1} is optimal in dimensions two. For instance the function $\r=\f{m^2}{\ln m}1_{B_{1/m}}$ has the exact decay rate 
$$\mathcal{M}\r(1/m,0)=(\ln m)^{-1}.$$
\end{Remark}

\subsection{Compactness}

The objective of this subsection is to investigate the decay rates of the density maximal function that allow the weighted $L^2(\r dx)$ norm of a function to be estimated by the $L^p$ norm of its partial derivatives as $p\in (1,2]$. Let $\mu=\r dx$ be a finite nonnegative Radon measure and $\mu_K(E)=\mu(E\cap K)$ for every Borel set $E$ and compact set $K\Subset \R^n$. Maz'ya established \cite{VM} that the embedding $H^{1}\mapsto L^2(d\mu)$ is continuous if and only if $\mu(E)\le c Cap(E)$ for all Borel set $E\Subset R^n$. Moreover, Admas \cite{AD} proved that the embedding is continuous if $\|(-\D)^{-\f12}\mu_K\|_{L^2}^2\le C\mu(K)$. Note that formally we have
$$\|(-\D)^{-\f12}\mu_K\|_{L^2}^2=-\int_{\R^n}\D^{-1}\mu_Kd\mu_K\le \sup_{\R^n}|\D^{-1}\mu_K|\mu(K).$$
Motivated by the inequality above, it is expected that the $L^\infty$ bound of the potential $(-\D)^{-1}\mu=\int G(x-y)d\mu(y)$, where $G(x)$ is the fundamental solution of $-\D$ in $\R^n$, ensures the continuity of the embedding, and a smoothness condition of the potential guarantees the compactness of the embedding. Indeed the decay rate as \eqref{21a} or \eqref{21aa} is good enough to ensure the continuity and the compactness of the embedding $H^1\mapsto L^2(\r dx)$.

\begin{Proposition}\label{p2}
For compact subsets $K\Subset\R^n$, there exists a constant $c_0$ depending only on $K$ and $p$ such that for all $\u\in W_0^{1,p}(K)$ with $p\in (\f{2n}{n+1},2]$
$$\int_K|\u|^2\r dx\le c_0\left(\omega_p(s)^{2(p-1)/p}\|\nabla\u\|_{L^{p}(K)}^2+(1+s^{-2n})^2\Big(\|\u\|^{2p}_{L^p(K)}+\|\u\|_{L^1(K)}^{2/p}\Big)\right),$$
where
$$\omega_p(s)=\begin{cases}
\sup_{x\in K} \int_{|x-y|\le s}G(x-y)\r(y)dy\quad&\textrm{if}\quad p=2\\
\sup_{x\in K} \int_{|x-y|\le s}|x-y|^{(p-n)/(p-1)}\r(y)dy\quad&\textrm{if}\quad p\in (\f{2n}{n+1},2),
\end{cases}$$
and $G(x)$ is the fundamental solution of $-\D$ in $\R^n$.
\end{Proposition}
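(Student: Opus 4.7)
\textbf{Proof plan for Proposition \ref{p2}.}

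The strategy is to represent $\u$ via a Riesz potential of its gradient, split at the radius $s$, and dispatch the two pieces by different means: the smooth piece by elementary sup bounds, and the oscillatory piece by a Hölder split matched to the kernel in the definition of $\omega_p(s)$.

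I would begin from the classical pointwise inequality, valid for $\u\in W_0^{1,p}(K)$,
$$|\u(x)|\le C\int_K|x-y|^{1-n}|\nabla\u(y)|\,dy,$$
and decompose $\u=\u_s+(\u-\u_s)$, where $\u_s=\u*\phi_s$ is the convolution with a standard mollifier of scale $s$. The smooth part $\u_s$ satisfies the elementary bounds $|\u_s(x)|\le Cs^{-n}\|\u\|_{L^1(K)}$ and $|\u_s(x)|\le Cs^{-n/p}\|\u\|_{L^p(K)}$, so that, after squaring and integrating against the finite measure $\r\,dx$ on $K$ (whose total mass is controlled by $\|\r\|_{L^1}$), Young's inequality balances the two bounds and yields the lower-order contribution $C(1+s^{-2n})^2\bigl(\|\u\|^{2p}_{L^p(K)}+\|\u\|^{2/p}_{L^1(K)}\bigr)$.

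For the oscillatory part I use the truncated Riesz estimate
$$|(\u-\u_s)(x)|\le C\int_{|x-y|\le cs}|x-y|^{1-n}|\nabla\u(y)|\,dy,$$
which follows from $\u-\u*\phi_s=(\u-\u(x))*\phi_s$ and the fundamental theorem of calculus applied to $\u$. The central step is to apply Hölder's inequality with the conjugate exponents $p$ and $p'=p/(p-1)$, splitting the kernel $|x-y|^{1-n}=|x-y|^a\cdot|x-y|^{1-n-a}$ with $a$ chosen so that the $p'$-side integral equals a constant multiple of the potential $\int_{|x-y|\le cs}|x-y|^{(p-n)/(p-1)}\r(y)\,dy$ (or its $p=2$ analogue involving $G$) appearing in the definition of $\omega_p(s)$. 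Squaring the resulting pointwise bound, integrating against $\r(x)\,dx$, and then using Fubini to carry out the $\r$-weighted $x$-integration inside the ball of radius $cs$ produces precisely the power $2/p'=2(p-1)/p$ of $\omega_p(s)$, while the remaining $p$-side integral, after a second application of Fubini, collapses to $\|\nabla\u\|_{L^p(K)}^2$.

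The main obstacle is the exponent bookkeeping in this Hölder split: the decomposition of $|x-y|^{1-n}$ has to be calibrated so that the $p'$-side reproduces exactly the kernel $|x-y|^{(p-n)/(p-1)}$ (or $G(x-y)$ at $p=2$) that defines $\omega_p(s)$, so that the decay of the density maximal function can be invoked in the form of the finite quantity $\omega_p(s)$, while simultaneously ensuring that the $p$-side, after absorbing $\r$ via Fubini, yields the unweighted $L^p$ norm of $\nabla\u$ rather than a weighted or higher-order variant. For $p=2$ the arithmetic is cleaner because the kernel $G$ pairs self-adjointly with Cauchy–Schwarz; for $p\in(1,2)$ the precise kernel $|x-y|^{(p-n)/(p-1)}$ and the fractional power $2(p-1)/p$ emerge directly from the choice of Hölder exponents, and once this matching is carried out the remainder of the estimate is routine.
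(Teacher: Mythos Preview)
There is a genuine gap in the oscillatory part of your plan. In the truncated Riesz bound
\[
|(\u-\u_s)(x)|\le C\int_{|x-y|\le cs}|x-y|^{1-n}|\nabla\u(y)|\,dy
\]
the density $\r$ does not appear; it only enters through the outer integration $\int(\cdot)\,\r(x)\,dx$. So a H\"older split of the kernel in the $y$-integral cannot place $\r(y)$ on the $p'$-side as you describe. If instead one interprets your ``Fubini'' step as expanding the square and integrating in $x$ first, one is led to the bilinear kernel
\[
K(y,z)=\int_{|x-y|,|x-z|\le cs}|x-y|^{1-n}|x-z|^{1-n}\r(x)\,dx,
\]
and the natural Schur-type bound $\sup_y\int K(y,z)\,dz\le Cs\,\sup_y\int_{|x-y|\le cs}|x-y|^{1-n}\r(x)\,dx$ produces the wrong potential: for $n=3$ this involves $|x-y|^{-2}$ rather than $|x-y|^{-1}\sim G(x-y)$, and under the decay hypothesis $\mathcal{M}\r(r,x)\le r^{\gamma(n)+\alpha}$ with small $\alpha$ this quantity is infinite. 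The specific kernel $|x-y|^{(p-n)/(p-1)}$ (or $G$ at $p=2$) in $\omega_p(s)$ does not come from a single H\"older split; it is exactly the self-convolution $g^{p/(2(p-1))}\star g^{p/(2(p-1))}$ of the truncated Riesz kernel $g=|\cdot|^{1-n}\mathbf 1_{|\cdot|\le s}$, and this identity (Lemma~\ref{la1} in the appendix) is the technical heart of the argument.

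The paper therefore does not attack $\int|\u|^2\r\,dx$ directly. It first proves the \emph{linear} inequality $\int_{E}|v|\r\,dx\le c\,\omega_p(s)^{(p-1)/p}\|\nabla v\|_{L^p}\|\r\|_{L^1(E)}^{1/2}+c\,s^{-n}\|v\|_{L^1}\|\r\|_{L^1(E)}^{1/2}$ for any Borel set $E$, using H\"older in $y$ to obtain $\|g\star(\r\mathbf 1_E)\|_{L^{p'}}$ and then the convolution lemma to estimate that norm by $\omega_p(s)^{(p-1)/p}\|\r\|_{L^1(E)}^{1/p}$. This linear bound is recast as a capacitary estimate $\|\r\|_{L^1(E)}\le c\,\textrm{Cap}_{\mathcal L}(E)^{2/p}$, and the quadratic inequality then follows from a Maz'ya--Adams level-set decomposition: one writes $\int|\u|^2\r=\int_0^\infty\|\r\|_{L^1(N_\lambda)}\,d\lambda^{4/p}$ with $N_\lambda=\{|\u|\ge\lambda^{2/p}\}$, and sums the capacities of the dyadic shells $N_{2^{j-1}}\setminus N_{2^j}$. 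The unusual lower-order powers $\|\u\|_{L^p}^{2p}$ and $\|\u\|_{L^1}^{2/p}$ in the statement are an artifact of this summation, not of a mollifier/Young-inequality trade-off. Your mollification handles the far-field essentially the same way the paper's cutoff does, but the passage from a linear to a quadratic weighted estimate genuinely requires the capacity machinery and the kernel self-convolution; the direct H\"older route you sketch does not close.
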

\begin{proof}
Set the quadratic form $\mathcal{L}: W^{1,p}(E)\mapsto \R$ by
$$\mathcal{L}(v)=\omega_p(s)^{p-1}\|\nabla v\|^{p}_{L^p(E)}+(1+s^{-2n})^{p}\|v\|_{L^1(E)}^{p}$$
and
$$Cap_{\mathcal{L}}(E):=\inf\{\mathcal{L}(v): v\in C_0^\infty(\R^n)\quad\textrm{and}\quad v(x)\ge 1\quad \textrm{for all} \quad x\in E\}.$$

\texttt{Step 1.} We claim that there is a constant $c$ depending only on $K$ such that 
$$\|\r\|_{L^1(E\cap K)}\le c \Big(Cap_{\mathcal{L}}(E)\Big)^{2/p}\qquad \textrm{for all Borel sets } E.$$

To this end, we notice that for any $v\in W^{1,p}_0(K)$
$$v(x)=\f{1}{n\varpi_n}\int_{K}\f{x-y}{|x-y|^n}\cdot\nabla v(y)dy,$$where $\varpi_n$ denotes the volume of the unit ball in $\R^n$.
For an increasing function $\theta(\sigma)\in C^\infty(\R)$ such that $\theta(\sigma)=0$ for $\sigma<s/8$, $\theta(\sigma)=1$ for $\sigma>s/4$ and $\theta'(\sigma)<16/s$, one has
\begin{subequations}\label{22}
\begin{align}
v(x)&=\f{1}{n\varpi_n}\int_{K}(1-\theta(|x-y|))\f{x-y}{|x-y|^n}\cdot\nabla v(y)dy\label{22a}\\
&\quad+\f{1}{n\varpi_n}\int_{K}\theta(|x-y|)\f{x-y}{|x-y|^n}\cdot\nabla v(y)dy.\label{22b}
\end{align}
\end{subequations}

For \eqref{22a}, one has
\begin{equation*}
\begin{split}
\int_{E\cap K} |\eqref{22a}|\r(x)dx &\le c\int_{E\cap K} g\star |\nabla v|(x) \r(x)dx\\
&=c\int_{\R^n}|\nabla v|(y)\int_{E\cap K}g(y-x)\r(x)dx dy\\
&\le c\|\nabla v\|_{L^p}\left(\int_{\R^n}\left(\int_{E\cap K} g(y-x)\r(y) dy\right)^{p/(p-1)}dx\right)^{(p-1)/p}
\end{split}
\end{equation*}
where $g(x)=|x|^{1-n}$ for $|x|\le s/4$ and $g(x)=0$ otherwise. Due to Lemma \ref{la1} one knows
$g^{\f{p}{2(p-1)}}\star g^{\f{p}{2(p-1)}}(x)\le \ov{G}(x)$ vanishes for $s\le |x|\le 1$, and 
for $1<p\le 2$, one has
\begin{equation*}
\begin{split}
\left(\int_{E\cap K}g(y-x)\r(y)dy\right)^{\f{p}{2(p-1)}}&\le \left(\int_{E\cap K} g^{\f{p}{2(p-1)}}(x-y)\r(y)dy\right)\left(\int_{E\cap K}\r(y)dy\right)^{\f{2-p}{2(p-1)}}\\
&= \|\r\|_{L^1(E\cap K)}^{\f{2-p}{2(p-1)}}\int_{E\cap K} g^{\f{p}{2(p-1)}}(x-y)\r(y)dy.
\end{split}
\end{equation*}
Therefore
\begin{equation*}
\begin{split}
&\int_{\R^n}\left(\int_{E\cap K} g(y-x)\r(y)dy\right)^{p/(p-1)}dx\\
&\quad\le  \|\r\|_{L^1(E\cap K)}^{\f{2-p}{p-1}}\int_{\R^n}\left(\int_{E\cap K} g^{\f{p}{2(p-1)}}(y-x)\r(y)dy\right)^{2}dx\\
&\quad= \|\r\|_{L^1(E\cap K)}^{\f{2-p}{p-1}}\int_{E\cap K}\left(\int_{E\cap K}g^{\f{p}{2(p-1)}}\star g^{\f{p}{2(p-1)}}(y-x)\r (y)dy\right)\r(x)dx\\
&\quad\le  \|\r\|_{L^1(E\cap K)}^{\f{2-p}{p-1}}\int_{E\cap K}\left(\int_{|x-y|<s}\ov{G}(x-y)\r(y)dy\right)\r(x)dx\\
&\quad\le \omega_p(s)\|\r\|_{L^1({E\cap K})}^{\f{1}{p-1}},
\end{split}
\end{equation*}
which yields
\begin{equation}\label{22aa}
\begin{split}
\int_{E\cap K}|\eqref{22a}|\r(x)dx&\le \|\nabla v\|_{L^p}\omega_p(s)^{(p-1)/p}\|\r\|_{L^1({E\cap K})}^{1/p}\\
&\le C(p)\|\nabla v\|_{L^p}\omega_p(s)^{(p-1)/p}\|\r\|_{L^1({E\cap K})}^{1/2},
\end{split}
\end{equation}
due to $\|\r\|_{L^1(\R^n)}<\infty$ and $p\in(1,2]$.
Since the function $\theta(|y|)|y|^{-n}y$ is smooth and the norm of its gradient is bounded by $cs^{-n}$, one has
$$|\eqref{22b}|\le cs^{-n}\|v\|_{L^1(K)},$$ which implies
\begin{equation*}
\int_{E\cap K}|\eqref{22b}|\r(x)dx\le cs^{-n}\|v\|_{L^1(K)}\|\r\|_{L^1({E\cap K})}\le cs^{-n}\|v\|_{L^1(K)}\|\r\|_{L^1({E\cap K})}^{1/2}
\end{equation*}
since $\|\r\|_{L^1(\R^n)}<\infty$.
This, combining with \eqref{22aa}, yields
$$\int_{E\cap K}|v|\r dx\le c\mathcal{L}(v)^{1/p}\|\r\|_{L^1({E\cap K})}^{1/2}.$$
If $v\ge 1$ on $E$, there follows
$$\|\r\|_{L^1({E\cap K})}\le\int_{E\cap K}|v| \r dx\le c\mathcal{L}(v)^{1/p}\|\r\|_{L^1({E\cap K})}^{1/2},$$
which yields the claim.

\texttt{Step 2.} There exists a positive constant $c$, which depends only on $K, p$,  but does not depend on $s$, such that the inequality
\begin{equation*}
\begin{split}
\int_0^\infty \Big[Cap_{\mathcal{L}}(N_\lambda)\Big]^{2/p}d\lambda^{4/p}\le c\omega_p(s)^{2(p-1)/p}\|\nabla v\|_{L^p(K)}^2+c(1+s^{-2n})^2(\|v\|_{L^1(K)}^{2/p}+\|v\|^{2p}_{L^p(K)})
\end{split}
\end{equation*}
holds for every function $v\in C_0^\infty$, where $N_\lambda=\{x\in K: |v(x)|\ge \lambda^{2/p}\}.$

Since $Cap_{\mathcal{L}}(N_\lambda)$ decreases in $\lambda$, there holds
\begin{equation*}
\int_0^\infty \Big[Cap_{\mathcal{L}}(N_\lambda)\Big]^{2/p}d\lambda^{4/p}\le (2^{4/p}-1)\sum_{j=-\infty}^{j=\infty} 2^{4j/p} \Big[Cap_{\mathcal{L}}(N_{2^j})\Big]^{2/p}.
\end{equation*}
Let $\eta\in C^\infty(\R)$ be a non-decreasing function such that $\eta(s)=0$ for $s\le 0$, $\eta(s)=1$ for $s\ge 2^{2/p}-1$ and $\eta'(s)\le 2(2^{2/p}-1)^{-1}$ for all $s$. Denote $v_j=\eta(2^{2(1-j)/p}|v(x)|-1)$. Since $v_j=0$ whenever $|v(x)|\le 2^{2(j-1)/p}$ and $v_j(x)=1$ as $|v(x)|\ge 2^{2j/p}$. The definition of the capacity $Cap_{\mathcal{L}}$ gives
\begin{equation*}
\begin{split}
Cap_{\mathcal{L}}(N_{2^j})\le \omega_p(s)^{p-1}\int_{N_{2^{j-1}}\setminus N_{2^j}}|\nabla v_j|^pdx+(1+s^{-2n})^p\left(\int_{N_{2^{j-1}}}|v_j|dx\right)^p.
\end{split}
\end{equation*}
Note that $|\nabla v_j|\le c(p)2^{2(1-j)/p}|\nabla v|$ and $|v_j|\le 1$. Moreover if $x\in N_{2^{j-1}}$, then one has $|v_j(x)|\le 2^{2(1-j)/p}|v(x)|$. Therefore for $j\le 0$ one has
\begin{equation}\label{2201}
Cap_{\mathcal{L}}(N_{2^j})\le c(p)2^{2(1-j)}\omega_p(s)^{p-1}\int_{N_{2^{j-1}}\setminus N_{2^j}}|\nabla v|^pdx+(1+s^{-2n})^p2^{2(1-j)/p}|K|^{p-1}\int_{N_{2^{j-1}}}|v|dx,
\end{equation}
while for $j\ge 1$ one has
\begin{equation}\label{2202}
Cap_{\mathcal{L}}(N_{2^j})\le c(p)2^{2(1-j)}\omega_p(s)^{p-1}\int_{N_{2^{j-1}}\setminus N_{2^j}}|\nabla v|^pdx+(1+s^{-2n})^p2^{2(1-j)}\left(\int_{N_{2^{j-1}}}|v|dx\right)^p.
\end{equation}
Denote by $1_E$ the indicator function of the set $E$. Since $p\le 2$, Minkowski's inequality yields
\begin{equation*}
\begin{split}
\sum_{j=-\infty}^\infty\left(\int_{N_{2^{j-1}}\setminus N_{2^j}}|\nabla v|^pdx\right)^{2/p}&=\sum_{j=-\infty}^\infty\left(\int_{K}1_{N_{2^{j-1}}\setminus N_{2^j}}|\nabla v|^pdx\right)^{2/p}\\
&\le \left(\int_{K}\left(\sum_{j=-\infty}^\infty1_{N_{2^{j-1}}\setminus N_{2^j}}|\nabla v|^2\right)^{p/2}dx\right)^{2/p}\\
&\le \left(\int_{K}|\nabla v|^{p}dx\right)^{2/p}.
\end{split}
\end{equation*}
Therefore one deduces from \eqref{2201} and \eqref{2202} that
\begin{equation*}
\begin{split}
\int_0^\infty \Big[Cap_{\mathcal{L}}(N_\lambda)\Big]^{2/p} d\lambda^{4/p}&\le c(p)\omega_p(s)^{2(p-1)/p}\left(\int_{K}|\nabla v|^pdx\right)^{2/p}+c(K)(1+s^{-2n})^2\|v\|_{L^1(K)}^{2/p}\\
&\quad+c(1+s^{-2n})^2\sum_{j\ge 0}\left(\int_{N_{2^{j-1}}}|v|dx\right)^2.
\end{split}
\end{equation*}
It is easy to see that
$$\left(\int_{N_{2^{j-1}}}|v|dx\right)^2\le |N_{2^{j-1}}|^{2(p-1)/p}\left(\int_K|v|^pdx\right)^{2/p}\quad\textrm{and}\quad |N_{2^{j-1}}|\le 2^{2(1-j)}\int_K |v|^pdx,$$
Thus
$$\left(\int_{N_{2^{j-1}}}|v|dx\right)^2\le 2^{4(p-1)(1-j)/p}\|v\|_{L^p(K)}^{2p},$$
which yields the desired.

\texttt{Step 3.} We use Step 1 to obtain $\|\r\|_{L^1(K\cap E)}\le c\left(Cap_{\mathcal{L}}(E)\right)^{2/p}$ for every Borel set $E$. Thus there follows that
$$\int_K |v|^2\r dx=\int_0^\infty \|\r\|_{L^1(K\cap N_\lambda)}d\lambda^{4/p}\le c\int_0^\infty \Big[Cap_{\mathcal{L}}(N_\lambda)\Big]^{2/p}d\lambda^{4/p}.$$
This, combining Step 2, gives the desired.

\end{proof}

Proposition \ref{p2} implies that the bound of $\omega_p(s)$ gives rise to the continuity of the embedding $W^{1,p}\mapsto L^2(\r dx)$. Actually a little bit smoothness of $\omega_p(s)$ will produce the compactness of that embedding.

\begin{Corollary}\label{t2}
Let $E$ be the set of points with property \eqref{21a} or \eqref{21aa}. Then for any compact set $K\Subset\R^n$ and for any $\eta>0$ there exists a positive constant $c(\eta)$, which depends on $K, \eta$ and does not depend on $v$, such that the inequality
\begin{equation}\label{c1}
\int_{K\cap E}\r|v|^2dx\le \eta\|v\|_{H^{1}}^2+c(\eta)(\|v\|_{L^2}^4+\|v\|_{L^1(K)})
\end{equation}
holds for all $v\in H_0^{1}(K)$. 
\end{Corollary}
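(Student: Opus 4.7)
The strategy is to apply Proposition \ref{p2} with $p=2$ not to $\r$ itself but to the truncated density $\tilde\r := \r\, 1_E$. Since $\int_K \tilde\r\,|v|^2\,dx = \int_{K\cap E} \r\,|v|^2\,dx$, once we know that the associated quantity
\begin{equation*}
\tilde\omega_2(s):=\sup_{x\in K}\int_{|x-y|\le s}G(x-y)\tilde\r(y)\,dy
\end{equation*}
tends to $0$ as $s\to 0^+$, the desired inequality \eqref{c1} will follow by a direct reading of Proposition \ref{p2}: given $\eta>0$, pick $s=s(\eta)$ with $c_0\tilde\omega_2(s)\le \eta$, set $c(\eta):=c_0(1+s^{-2n})^2$, and absorb $\|\nabla v\|_{L^2}^2\le \|v\|_{H^1}^2$ on the right-hand side. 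The whole problem thus reduces to establishing $\tilde\omega_2(s)\to 0$ uniformly in $x\in K$.

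The key intermediate step is a uniform decay bound on the maximal function of $\tilde\r$. Setting $f(t)=(\ln t^{-1})^{-\beta}$ in dimension two (from \eqref{21a}) and $f(t)=t^{\beta}$ in dimension three (from \eqref{21aa}), I claim that $\mathcal{M}\tilde\r(t,x)\le f(2t)$ for every $x\in K$ and every $t>0$ small. This is a one-line covering argument: either $B_t(x)\cap E=\emptyset$, in which case $\mathcal{M}\tilde\r(t,x)=0$, or there exists $y^*\in B_t(x)\cap E$, in which case $B_t(x)\subset B_{2t}(y^*)$ and the defining decay of $E$ at $y^*$ yields $\mathcal{M}\tilde\r(t,x)\le \mathcal{M}\r(2t,y^*)\le f(2t)$.

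Equipped with this uniform bound, I would reproduce the Stieltjes integration by parts of Lemma 2.2 verbatim, but now for the measure $\tilde\r\,dy$ at an arbitrary $x\in K$, to obtain
\begin{equation*}
\int_{B_s(x)} G(x-y)\tilde\r(y)\,dy\;\le\; C\,G(s)f(2s)+C\int_0^s|G'(\sigma)|f(2\sigma)\,d\sigma.
\end{equation*}
Inserting $G(\sigma)\sim -\ln\sigma$ when $n=2$ and $G(\sigma)\sim\sigma^{-1}$ when $n=3$, together with the corresponding $f$, the right-hand side is of order $(\ln s^{-1})^{1-\beta}$ or $s^{\beta-1}$ respectively; since $\beta>1$, both quantities vanish in the limit $s\to 0$ uniformly in $x$, proving $\tilde\omega_2(s)\to 0$.

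The main obstacle is that the hypotheses \eqref{21a} and \eqref{21aa} control $\mathcal{M}\r$ only at points of $E$, whereas Proposition \ref{p2} demands a bound on the $\omega_2$-quantity supremized over all of $K$, including points $x\notin E$ where no a priori decay of $\mathcal{M}\r$ is available. The covering argument in paragraph two is precisely what bridges this gap: passing from $\r$ to $\tilde\r=\r\, 1_E$ and from the possibly bad centre $x$ to a nearby good centre $y^*\in E$ costs only a factor of $2$ in the radius, which is absorbed into the constants of the Lemma 2.2 computation.
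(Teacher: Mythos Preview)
Your argument is correct and follows the same route as the paper: apply Proposition~\ref{p2} with $p=2$ and choose $s$ small so that the $\omega_2(s)$-coefficient is below $\eta$. The paper's proof is a single sentence invoking \eqref{21}/\eqref{21aa1} to conclude $\omega_2(s)\to0$, whereas you make explicit the point the paper leaves implicit---namely that Proposition~\ref{p2} requires the supremum in $\omega_2$ over \emph{all} $x\in K$, not merely over $K\cap E$. Your truncation $\tilde\r=\r\,1_E$ together with the one-line covering argument (transferring the decay from a good centre $y^*\in E$ at the cost of doubling the radius) is exactly what is needed to close this gap, and the subsequent Stieltjes computation is the same as in the lemma preceding the corollary.
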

\begin{proof}
The estimate \eqref{21} or \eqref{21aa1} implies that $\omega_2(s)\rightarrow 0$ as $s\rightarrow 0$, and hence the inequality \eqref{c1} follows from Proposition \ref{p2} by setting $s$ to be sufficiently small so that $\omega_2(s)\le \eta$.
\end{proof}

Corollary \ref{t2} implies that a slightly higher decay rate of the density maximal function prevents the concentration, and hence Corollary \ref{t2}, combining with \eqref{2} and Proposition \ref{p1}, implies the following corollary.
\begin{Corollary}
The reduced defect measure $\theta$ in $\R^2$ concentrates on a subset of countable points $\{x_i\}$, whose density maximal functions at $x_i$ violate the decay rate \eqref{21a}.
\end{Corollary}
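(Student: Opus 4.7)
The plan is to combine three pieces already in place. Inequality \eqref{b} dominates the reduced defect measure by the weak-star defect measure on closed sets, $\theta(F) \le \nu(F)$; equation \eqref{2} expresses $\nu$ in $\R^2$ as a countable sum of Dirac masses, $\nu = \sum_{i\in I} \nu_i \delta_{x_i}$; hence any compact set disjoint from $\{x_i\}$ has $\nu$-measure, and therefore $\theta$-measure, zero. Outer regularity then gives that $\theta$ is concentrated on the countable set $\{x_i\}$.

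It remains to exclude those $x_i$ at which the decay rate \eqref{21a} actually holds. Let $A \subset \R^2$ denote the set of such points, understood uniformly in $\i$ (as provided by the uniformization of Section 3 referenced in the introduction). For a compact $K \Subset \R^2$, apply Corollary \ref{t2} to $v = \u^\i - \u$ to obtain
\begin{equation*}
\int_{K \cap A} \r^\i |\u^\i-\u|^2\, dx \le \eta \|\u^\i-\u\|_{H^1(K)}^2 + c(\eta)\Bigl(\|\u^\i-\u\|_{L^2(K)}^4 + \|\u^\i-\u\|_{L^1(K)}\Bigr).
\end{equation*}
The first term is uniformly bounded since $\u^\i \rightharpoonup \u$ in $H^1$, while Rellich's theorem gives the strong convergences $\u^\i \to \u$ in $L^2(K)$ and $L^1(K)$, so the last two contributions vanish in the limit. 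Taking $\limsup_{\i \to 0}$ and then $\eta \to 0$ yields $\theta(K \cap A) = 0$, and exhausting $\R^2$ by such compacts gives $\theta(A) = 0$. Combining the two steps, $\theta$ is supported on $\{x_i\} \setminus A$, i.e., on a countable collection of points at which the decay rate \eqref{21a} is violated.

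The one genuinely subtle point — and the only real obstacle — is the uniform-in-$\i$ application of Corollary \ref{t2}. The corollary as stated concerns a single density, but here it must be applied to the whole sequence $\r^\i$ with constants independent of $\i$; equivalently, one needs $\omega_2(s)$ associated with each $\r^\i$ to tend to zero as $s\to 0$ uniformly in $\i$ on $A$. This is precisely why $A$ must be defined through a common bound of the form \eqref{21a} with the same $\beta > 1$ for every $\r^\i$, which is the role of the uniformization estimate developed in Section 3.
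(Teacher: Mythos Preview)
Your argument is correct and matches the paper's own approach, which is stated as a one-line consequence of Corollary~\ref{t2}, equation~\eqref{2}, and Proposition~\ref{p1}. You have simply spelled out what the paper leaves implicit: use $\theta\le\nu$ on closed sets together with the atomic structure~\eqref{2} of $\nu$ to confine $\theta$ to the countable set $\{x_i\}$, then invoke the compactness estimate of Corollary~\ref{t2} to kill the contribution from any $x_i$ at which the higher decay~\eqref{21a} holds.

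Two minor remarks. First, the phrase ``outer regularity then gives\ldots'' is slightly off: $\theta$ is only an outer measure, so what you actually use is that the complement of $\{x_i\}$ is $\sigma$-compact together with countable subadditivity of $\theta$ and the bound~\eqref{b} on each compact piece. Second, your appeal to Section~3 for the uniform-in-$\i$ version of Corollary~\ref{t2} is more than is needed here: the paper already builds this uniformity into the hypothesis of Proposition~\ref{p6} (``uniformly for $\r^\i$ and $\r$''), and the present corollary is meant in the same spirit. The genuine uniformization machinery of Section~3 is only required later, when one must \emph{produce} such a uniform set with controlled Hausdorff premeasure rather than assume it.
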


Actually the set $E$ of points with higher decay rate \eqref{21a} and \eqref{21aa} yields a kind of strong convergence.

\begin{Proposition}\label{p6}
Let $E$ be the set of points with property \eqref{21a} or \eqref{21aa} uniformly for $\r^\i$ and $\r$. For any compact $K\Subset\R^n$ and $\phi\in C(\R^n)$, 
the following statements hold true
\begin{itemize}
\item For $f,g\in H^{1}(\R^n)$,
$$\lim_{\i\rightarrow 0}\int_{K\cap E}\phi(\r^\i-\r)fg dx=0.$$
\item $\lim_{\i\rightarrow 0}\left|\int_{K\cap E}\phi(\r^\i|\u^\i|^2-\r|\u|^2)dx\right|=0.$
\item $\lim_{\i\rightarrow 0}\left|\int_{K\cap E}\phi(\r^\i\u^\i-\r\u)dx\right|=0.$
\end{itemize}
\end{Proposition}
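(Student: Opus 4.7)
The plan is to reduce all three items to the weak convergence $\r^\i\rightharpoonup\r$ (in $L^\gamma$ if $\gamma>1$, or weakly in $L^1_{loc}$ via uniform integrability of $\{\r^\i\}$ if $\gamma=1$) by a smooth-approximation argument whose remainder is controlled on $K\cap E$ by Corollary \ref{t2}. This is available precisely because $E$ consists of points at which the higher decay rate \eqref{21a} or \eqref{21aa} holds uniformly in $\i$, so that the embedding $H^1\hookrightarrow L^2(\r\,dx)$ (and the analogous statement for $\r^\i$) is quantitative with constants independent of $\i$.

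For (i), I would approximate $f,g\in H^1(\R^n)$ by $f_k,g_k\in C_0^\infty(\R^n)$ with $f_k\to f$, $g_k\to g$ in $H^1$, and decompose
\[
\int_{K\cap E}\phi(\r^\i-\r)fg\,dx=\int_{K\cap E}\phi(\r^\i-\r)f_kg_k\,dx+\int_{K\cap E}\phi(\r^\i-\r)(fg-f_kg_k)\,dx.
\]
For each fixed $k$, the first integral tends to $0$ as $\i\to 0$, since $\chi_{K\cap E}\phi f_kg_k$ is bounded with compact support and hence admissible against the weak convergence of $\r^\i-\r$. For the second integral, expand $|fg-f_kg_k|\le|f-f_k||g|+|f_k||g-g_k|$, apply Cauchy--Schwarz, and then Corollary \ref{t2} (after multiplying the functions by a cutoff that places them in $H_0^{1}(K')$ for some $K\Subset K'$) to bound
\[
\int_{K\cap E}\r^\i|f-f_k||g|\,dx\le\Big(\int_{K\cap E}\r^\i|f-f_k|^2\Big)^{1/2}\Big(\int_{K\cap E}\r^\i|g|^2\Big)^{1/2},
\]
where the first factor is small for large $k$ uniformly in $\i$ (Corollary \ref{t2} controls it by $\|f-f_k\|_{H^1}^2$, $\|f-f_k\|_{L^2}^4$ and $\|f-f_k\|_{L^1(K')}$, all of which tend to $0$) and the second factor is uniformly bounded in $\i$. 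The analogous bound with $\r$ replacing $\r^\i$ is identical. Sending $\i\to 0$ first and then $k\to\infty$ concludes (i).

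For (ii), split
\[
\r^\i|\u^\i|^2-\r|\u|^2=\r^\i(\u^\i-\u)\cdot(\u^\i+\u)+(\r^\i-\r)|\u|^2.
\]
The second summand is handled by applying (i) componentwise with $f=g=u_j$. For the first, Cauchy--Schwarz yields
\[
\Big|\int_{K\cap E}\phi\,\r^\i(\u^\i-\u)\cdot(\u^\i+\u)\,dx\Big|\le\|\phi\|_\infty\Big(\int_{K\cap E}\r^\i|\u^\i-\u|^2\Big)^{1/2}\Big(\int_{K\cap E}\r^\i|\u^\i+\u|^2\Big)^{1/2},
\]
whose second factor is uniformly bounded via Corollary \ref{t2} and $\sup_\i\|\u^\i\|_{H^1}<\infty$, and whose first factor is driven to $0$ by Corollary \ref{t2} applied to a cutoff of $\u^\i-\u$: the $\eta\|\u^\i-\u\|_{H^1}^2$ piece is controlled by the uniform $H^1$ bound, while the remainder $\|\u^\i-\u\|_{L^2(K')}^4+\|\u^\i-\u\|_{L^1(K')}$ tends to $0$ by Rellich's compactness, so sending $\i\to 0$ then $\eta\to 0$ does it. Item (iii) follows at once from the decomposition $\r^\i\u^\i-\r\u=\r^\i(\u^\i-\u)+(\r^\i-\r)\u$: the first piece is bounded by $(\int\r^\i)^{1/2}(\int\r^\i|\u^\i-\u|^2)^{1/2}\to 0$ as in (ii), and the second is the single-function analogue of (i) (approximate $\u$ in $H^1$ by $\u_k\in C_0^\infty$, use weak convergence of $\r^\i$ against $\phi\u_k$, and control the residual $\int(\r^\i+\r)|\u-\u_k|$ by Cauchy--Schwarz plus Corollary \ref{t2}).

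The main obstacle is the second step of (i): in three dimensions $fg$ only lies in $L^3$ while $\r^\i$ is merely weakly compact in $L^\gamma$ with $\gamma\le 3/2$, so a duality gap forbids direct testing of the weak convergence of $\r^\i$ against $fg$. The role of Corollary \ref{t2}, itself a consequence of the uniform decay of the density maximal function on $E$, is precisely to fill this gap by providing a uniform compact embedding $H^1\hookrightarrow L^2(\r\,dx)$ on $K\cap E$.
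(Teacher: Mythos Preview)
Your proposal is correct and follows essentially the same route as the paper: smooth approximation plus weak convergence of $\r^\i$ for the main term, with residuals controlled on $K\cap E$ by Corollary~\ref{t2}, and then the decomposition $\r^\i|\u^\i|^2-\r|\u|^2=\r^\i(\u^\i-\u)\cdot(\u^\i+\u)+(\r^\i-\r)|\u|^2$ for (ii) and $\r^\i\u^\i-\r\u=\r^\i(\u^\i-\u)+(\r^\i-\r)\u$ for (iii). The only cosmetic differences are that the paper uses Young's inequality (with a parameter) where you use Cauchy--Schwarz, and your treatment of the cutoff needed to place test functions in $H_0^1(K')$ and of the $(\r^\i-\r)\u$ term in (iii) is actually more explicit than the paper's.
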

\begin{proof}
For the first one, fix $\dl>0$ and choose $\ov{f}, \ov{g}\in C_0^\infty(\R^n)$ such that
$$\|f-\ov{f}\|_{H^1}+\|g-\ov{g}\|_{H^1}\le \dl.$$
Thus one deduces from Corollary \ref{t2} that
\begin{equation*}
\begin{split}
\limsup_{\i\rightarrow 0}\left|\int_{K\cap E}\phi(\r^\i-\r)fgdx\right|&\le \limsup_{\i\rightarrow 0}\left|\int_{K\cap E}\phi(\r^\i-\r)\ov{f}\ov{g} dx\right|\\
&\quad+c(\phi)\int_{K\cap E}(\r^\i+\r)(|g||f-\ov{f}|+|f||g-\ov{g}|)dx\\
&\le c\dl (\|f\|_{H^1}+\|g\|_{H^1})\rightarrow 0\quad\textrm{as}\quad \dl\rightarrow 0,
\end{split}
\end{equation*}
which yields the desired.

For the second one, one has, using the first identity
\begin{equation*}
\begin{split}
&\lim_{\i\rightarrow 0}\left|\int_{K\cap E}\phi(\r^\i|\u^\i|^2-\r|\u|^2)dx\right|\\
&\quad=\lim_{\i\rightarrow 0}\left|\int_{K\cap E}\phi\r^\i(|\u^\i|^2-|\u|^2)dx\right|\\
&\quad\le c(\phi)\limsup_{\i\rightarrow 0}\left(\f{1}{\sqrt{\eta}}\int_{K\cap E}\r^\i|\u^\i-\u|^2dx+\sqrt{\eta}\int_{K\cap E}\r^\i|\u^\i+\u|^2dx\right)\\
&\quad\le c(\phi)\limsup_{\i\rightarrow 0}\Big(\sqrt{\eta}\|\u^\i-\u\|_{H^1}+c(\eta)\|\u^\i-\u\|_{L^2}+\sqrt{\eta}\Big)\\
&\quad\rightarrow 0 \quad\textrm{as}\quad \eta\rightarrow 0
\end{split}
\end{equation*}
as claimed.

For the third one,  since $\r^\i|\u^\i-\u|\le C(\eta)\r^\i|\u^\i-\u|^2+\eta \r^\i,$ one has
\begin{equation*}
\begin{split}
\int_{K\cap E}\r^\i|\u^\i-\u|dx\le \eta \|\r^\i\|_{L^1}+C(\eta)\int_{K\cap E}\r^\i|\u^\i-\u|^2dx.
\end{split}
\end{equation*}
The first term approaches to zero as $\eta\rightarrow 0$ due to the uniform mass, while the second term approaches to zero due to the second claim. Therefore, there follows
$$\lim_{\i\rightarrow 0}\int_{K\cap E}\r^\i|\u^\i-\u|dx=0$$
and hence
$$\lim_{\i\rightarrow 0}\left|\int_{K\cap E}(\r^\i\u^\i-\r\u)dx\right|=0.$$
\end{proof}

\bigskip\bigskip


\section{Uniformization Estimates}

In this section we consider the decay rate of the density maximal functions associated with an approximate sequence of the density $\r^\i$, which characteristics the concentration set. Throughout this section, the nonnegative density $\r$ depends only on the spatial variable $x$ and satisfies a uniform bound
\begin{equation}\label{energy3}
C\ge
\begin{cases}
\|\r\|_{L^1(\R^n)}+\|\r\|_{L^\gamma}\quad&\textrm{if}\quad \gamma>1\\
\|\r\|_{L^1(\R^n)}+\|\r\ln(1+\r)\|_{L^1}\quad&\textrm{if}\quad \gamma=1,
\end{cases}
\end{equation}
for some universal positive constant $C$.

\begin{Proposition}\label{p}
Suppose that $\r$ is a nonnegative measure with finite total mass; that is $\|\r\|_{L^1(\R^n)}<\infty$. Then for $r\in (0,1)$ and an arbitrary small positive constant $\alpha$, the closed set
$$E_r=\{x\in\R^n: \mathcal{M}\r(s,x)\le  s^{\gamma(n)+\alpha},\quad 0\le s\le r\}$$
satisfies
$$\mathcal{H}_r^{\gamma(n)+\alpha}(E_r^c)\le 5^{\gamma(n)+\alpha}\|\r\|_{L^1}.$$
\end{Proposition}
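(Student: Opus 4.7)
The plan is to establish the claim via a standard Vitali-type covering argument on the \emph{bad set} $E_r^c$, exploiting that by its very definition every point of $E_r^c$ is the centre of a ball where $\r$ is too concentrated. First I would verify that $E_r$ is closed. For each fixed $s>0$, the indicator $\mathbf{1}_{B_s(x)}(y)$ is lower semicontinuous in $x$ for every $y\in\R^n$ (its superlevel sets are open balls in $x$), so by Fatou's lemma the map $x\mapsto \mathcal{M}\r(s,x)$ is lower semicontinuous. Hence $\{x:\mathcal{M}\r(s,x)>s^{\gamma(n)+\alpha}\}$ is open, and $E_r^c$, being the union of these open sets over $s\in(0,r]$, is open as well.

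Next, for each $x\in E_r^c$ I would select a \emph{bad radius} $s(x)\in(0,r]$ satisfying $\mathcal{M}\r(s(x),x)>s(x)^{\gamma(n)+\alpha}$. The collection $\{B_{s(x)}(x):x\in E_r^c\}$ is then an open cover of $E_r^c$ with radii uniformly bounded by $r$, and Vitali's $5r$-covering lemma extracts a countable pairwise disjoint subfamily $\{B_{s_j}(x_j)\}_{j}$ such that
$$E_r^c\subset\bigcup_{j}B_{5s_j}(x_j).$$
This is the pivotal structural step: the highly overlapping cover is replaced by a disjoint one inflated by a factor of $5$, which allows the captured $\r$-mass to be summed without multiplicity and is the reason for the constant $5^{\gamma(n)+\alpha}$ appearing on the right-hand side.

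Invoking the bad-radius property $s_j^{\gamma(n)+\alpha}<\mathcal{M}\r(s_j,x_j)$ and plugging the inflated cover into the definition of the Hausdorff pre-measure, I would conclude
$$\mathcal{H}_r^{\gamma(n)+\alpha}(E_r^c)\le\sum_{j}(5s_j)^{\gamma(n)+\alpha}=5^{\gamma(n)+\alpha}\sum_{j}s_j^{\gamma(n)+\alpha}\le 5^{\gamma(n)+\alpha}\sum_{j}\int_{B_{s_j}(x_j)}\r(y)\,dy,$$
after which the pairwise disjointness of $\{B_{s_j}(x_j)\}$ combined with $\r\ge 0$ yields $\sum_{j}\int_{B_{s_j}(x_j)}\r\,dy\le\|\r\|_{L^1}$, which is the desired bound. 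The only mildly delicate point is the reconciliation of the Vitali inflation $5s_j\le 5r$ with the radius constraint $r_j\le r$ built into the definition of $\mathcal{H}_r^{\gamma(n)+\alpha}$; this is immaterial for the envisaged application (since one lets $r\to 0$ in subsequent sections) and can be handled cleanly by rescaling the threshold radius, so I do not anticipate it as a genuine obstacle—the entire content of the proposition is the covering/disjointness dichotomy outlined above.
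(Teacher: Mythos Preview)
Your proposal is correct and follows essentially the same route as the paper: select a bad radius at each point of $E_r^c$, apply the Vitali $5r$-covering lemma, and sum the masses over the resulting disjoint family. Your treatment is in fact slightly more careful than the paper's, both in justifying closedness of $E_r$ via lower semicontinuity and in flagging the $5s_j\le 5r$ versus $r$ discrepancy in the premeasure definition, which the paper silently ignores.
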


Note that $\{E_r\}_{\{r\ge0\}}$ is the family of ``good" sets in terms of the decay of the radial distribution $\mathcal{M}\r(s,x)$; $\{E_r^c\}$ is the family of ``bad" sets. The size of the exceptional set, $E_r^c$, is bounded in terms of the total mass of $\r$ and a constant which depends only on $\gamma, n$.

\begin{proof}[Proof of Proposition \ref{p}]
The fact that the set $E_r$ is closed is a direct consequence of the continuity of integral.
Moreover, by hypothesis we have
$$E_r^c=\{x\in\R^n: \mathcal{M}\r(s,x)>s^{\gamma(n)+\alpha}\quad\textrm{for some s},\quad 0\le s\le r\}.$$

We observe that $E_r^c\subset \bigcup_{x\in E_r^c}B_{s(x)}(x)$. Let $\{B_j\}$, $B_j=B_{s_j(x_j)}(x_j)$ be the subcovering described in the covering lemma (see Theorem 1.3.1 in \cite{WP}), then
$$E^c_r\subset \bigcup_{j=1}^\infty B_j^*\quad\textrm{with}\quad B_j^*=B_{5s_j}(x_j).$$
Thus
\begin{equation*}
\begin{split}
\mathcal{H}_r^{\gamma(n)+\alpha}(E_r^c)&\le \sum_{j=1}^\infty (5s_j)^{\gamma(n)+\alpha}\le 5^{\gamma(n)+\alpha} \sum_{j=1}^\infty \mathcal{M}\r(s_j, x_j)\\
&=5^{\gamma(n)+\alpha}\int_{\cup_{j=1}^\infty B_j}\r(y)dy\le 5^{\gamma(n)+\alpha} \int_{\R^n}\r(x)dx.
\end{split}
\end{equation*} 
\end{proof}

The uniform bound \eqref{energy3} yields the $L^{n\gamma/(n-\gamma)}$ estimate of the Riesz potential associated with the density. That is
\begin{Lemma}\label{L31}
For the nonnegative density $\r$ with the uniform bound \eqref{energy3}, there holds true that
\begin{equation*}
\begin{cases}
\|I_1\r\|_{L^{n/(n-1)}(B_R)}\le C(R)\Big(1+\|\r\|_{L^1(\R^n)}+\|\r\ln(1+\r)\|_{L^1(\R^n)}\Big) \quad&\textrm{if}\quad \gamma=1,\\
\|I_1\r\|_{L^{n\gamma/(n-\gamma)}(\R^n)}\le C\|\r\|_{L^\gamma(\R^n)}\quad&\textrm{if}\quad \gamma\in (1,n/2],
\end{cases}
\end{equation*}
where $B_R$ stands for the ball with radius $R$ and centre $0$.
\end{Lemma}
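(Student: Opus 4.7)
The two cases require different tools: Hardy-Littlewood-Sobolev (HLS) in the reflexive range, and an Orlicz-duality argument at the endpoint $\gamma=1$.

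For $\gamma\in(1,n/2]$ the estimate is a direct consequence of the classical HLS inequality. Since $I_1\r(x)=c_n\int_{\R^n}|x-y|^{1-n}\r(y)dy$ and $1<\gamma<n$, HLS produces
\[\|I_1\r\|_{L^q(\R^n)}\le C\|\r\|_{L^\gamma(\R^n)}\]
with $\f{1}{q}=\f{1}{\gamma}-\f{1}{n}=\f{n-\gamma}{n\gamma}$, which is precisely $q=n\gamma/(n-\gamma)$. This settles the second case.

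For $\gamma=1$ we lose HLS at the endpoint and must use the finer assumption $\r\ln(1+\r)\in L^1$. I would argue by duality: since the dual of $L^{n/(n-1)}(B_R)$ is $L^n(B_R)$, it suffices to estimate $\int_{B_R}I_1\r(x)\phi(x)dx=\int_{\R^n}\r(y)I_1\phi(y)dy$ (Fubini) for every $\phi\in L^n(B_R)$ with $\|\phi\|_{L^n}\le1$, extended by zero outside $B_R$. I would split this integral into $B_{2R}$ and its complement. On $B_{2R}^c$, for $|y|\ge 2R$ and $x\in B_R$ one has $|x-y|\ge|y|/2\ge R$, so $|I_1\phi(y)|\le C(R)\|\phi\|_{L^1(B_R)}\le C(R)$, and this piece is bounded by $C(R)\|\r\|_{L^1}$. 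On $B_{2R}$ I would invoke Adams' Trudinger-Moser-type exponential integrability: there exists $c_0>0$ such that $\int_{B_{2R}}\exp(c_0|I_1\phi|^{n/(n-1)})dy\le C(R)$ whenever $\|\phi\|_{L^n}\le 1$. Because $n/(n-1)>1$, Young's inequality gives $\lambda|I_1\phi|\le C(\lambda)+(c_0/2)|I_1\phi|^{n/(n-1)}$, so $\exp(\lambda|I_1\phi|)\in L^1(B_{2R})$ uniformly for any fixed $\lambda>0$. Applying the Orlicz/Fenchel-Young inequality $ab\le a\ln(1+a)+(e^b-1)$ with $a=\r/\lambda$ and $b=\lambda|I_1\phi|$ then yields
\[\int_{B_{2R}}\r|I_1\phi|dy\le C(R)\Big(1+\|\r\|_{L^1}+\|\r\ln(1+\r)\|_{L^1}\Big)\]
after a suitable choice of $\lambda$. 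Combining with the exterior estimate and taking the supremum over $\phi$ produces the stated bound.

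The main obstacle is precisely the endpoint analysis at $\gamma=1$: HLS is unavailable, so the whole argument hinges on matching the $L\log L$ integrability of $\r$ against the critical exponential integrability of $I_1\phi$ delivered by Adams' inequality. A subsidiary technical point is to verify the Adams bound uniformly in $\phi$ on $\{\|\phi\|_{L^n}\le 1\}$, which is what makes $\lambda$ selectable independently of $\phi$ in the Young step above.
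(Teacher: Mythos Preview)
Your argument is correct. For $\gamma>1$ it coincides with the paper's (both invoke the classical Riesz/HLS estimate). For $\gamma=1$ the two routes genuinely differ. The paper works on the ``input'' side: it uses Stein's theorem that $\r\ln(1+\r)\in L^1$ forces the Hardy--Littlewood maximal function $\mathcal{M}_0\r$ into $L^1_{loc}$, pairs this with the trivial bound $\mathcal{M}_n\r\le\|\r\|_{L^1}$, and then applies Adams' pointwise interpolation estimate for Riesz potentials (Proposition~3.2 in \cite{AD1}) to conclude $I_1\r\in L^{n/(n-1)}(B_R)$. You instead work on the ``output'' side by duality: you test $I_1\r$ against $\phi\in L^n(B_R)$, transfer the potential to $\phi$, and exploit Adams' exponential integrability of $I_1\phi$ (the Riesz--Trudinger inequality) together with the Orlicz/Young pairing against $\r\ln(1+\r)$. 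Both approaches ultimately rest on results of Adams, but different ones. The paper's route has the advantage of reusing exactly the maximal-function machinery that drives the subsequent Lemma~\ref{L52} and Theorem~\ref{t3}; your duality argument is more self-contained and makes the role of the $L\log L$ bound very transparent, at the cost of invoking the sharp exponential inequality rather than the simpler maximal-function interpolation.
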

\begin{proof}
The inequality for $\gamma>1$ follows from the classical estimate of the Riesz potential (\cite{ST}). 

Set
\begin{equation}\label{L31a}
\mathcal{M}_\beta\r(x)=\sup_{r>0}r^{\beta-n}\int_{B_r(x)}\r(y)dy.
\end{equation}
The finite mass $\|\r\|_{L^1(\R^n)}$ implies that the function $\mathcal{M}_n\r(x)$ belongs to $L^\infty$. 
Moreover the finite entropy bound $\|\r\ln(1+\r)\|_{L^1(\R^n)}$ implies that the maximal function $\mathcal{M}_0\r(x)$ belongs to $L^1(B_R)$ for any $R>0$ (see Page 23 in \cite{ST})
\begin{equation}\label{30b2}
\begin{split}
\|\mathcal{M}_0\r(x)\|_{L^1(B_R)}\le C(R)(1+\|\r\ln(1+\r)\|_{L^1(\R^n)}).
\end{split}
\end{equation}
These two uniform bounds, combining Proposition 3.2 in \cite{AD1}, yields that $I_1\r\in L^{n/(n-1)}(B_R)$.
\end{proof}

Lemma \ref{L31} yields a uniform bound of $I_1\r$ in $L^{n\gamma/(n-\gamma)}$. Note that $n\gamma/(n-\gamma)\le n$ for $\gamma\in [1,n/2]$.
Next we are going to improve the uniform bound of the Riesz potential $I_1\r$ in $L^p$ with $p>n$ for all $\gamma\in[1,n/2]$ when the density maximal function decays faster. This estimate is crucial in Section 5 for improving the integrability of the density itself. For this purpose, we denote by $1_\O$ the indicator function of $\O$; that is
\begin{equation*}
1_\O=
\begin{cases}
1\quad\textrm{if}\quad x\in \O\\
0\quad\textrm{if}\quad x\notin \O.
\end{cases}
\end{equation*} 
\begin{Lemma}\label{L52}
Let $\alpha\in(0,1)$ satisfy
\begin{equation}\label{300}
\gamma(n)-n+1+\alpha<0,
\end{equation} 
and $$\O=\{x\subset B_R: \mathcal{M}\r(s,x)\le s^{\gamma(n)+\alpha}\quad\textrm{for}\quad s\ge 0\}.$$ Then there holds
$$\|I_1(\r1_\O)\|_{L^p(\R^n)}\le C(\gamma,n, \alpha)\quad\textrm{with}\quad p=\gamma\f{\gamma(n)-n+\alpha}{\gamma(n)-n+1+\alpha}>n.$$
\end{Lemma}
\begin{proof}
First of all, we claim that for all $x\in\R^n$, one has
\begin{equation}\label{300a1}
\mathcal{M}(\r 1_\O)(s,x)\le (2s)^{\gamma(n)+\alpha}\quad\textrm{for}\quad s\ge 0.
\end{equation}
Indeed, if $x\in \O$, the claim is obviously true. Let $x\notin\O$. Note that $\O$ is a closed set and we set $d_x=\textrm{dist}\{x,\O\}$. If $s\le d_x$, then
$$\mathcal{M}(\r 1_\O)(s,x)\le \int_{|x-y|<s}(\r 1_\O)(y)dy=0\le (2s)^{\gamma(n)+\alpha}.$$
If $s>d_x$, then there exists $x_0\in \O\cap B_s(x)$ and $B_s(x)\subset B_{2s}(x_0)$. Thus
\begin{equation*}
\begin{split}
\mathcal{M}(\r 1_\O)(s,x)&\le \int_{|x-y|<s}(\r 1_\O)(y)dy\le \int_{|x-y|<s}\r(y)dy\\
&\le \int_{|x_0-y|<2s}\r(y)dy\le (2s)^{\gamma(n)+\alpha}
\end{split}
\end{equation*}
as claimed.

Next we follow the basic ideas of Adams \cite{AD1}.
For $\r\neq 0$, set
\begin{equation*}
\begin{split}
|I_1(\r 1_\O)(x)|&\le\left|\int_{|x-y|<\dl}|x-y|^{1-n}\r(y)1_\O(y) dy\right|+\left|\int_{|x-y|\ge \dl}|x-y|^{1-n}\r(y) 1_\O(y)dy\right|\\
&\le \int_{|x-y|<\dl}|x-y|^{1-n}\r(y)dy+\int_{|x-y|\ge\dl }|x-y|^{1-n}\r(y)1_\O(y)dy\\
&=I+II,
\end{split}
\end{equation*}
where $\dl>0$ is to be determined later. Let $k\in\mathbb{Z}$ and $a_k(x)=\{y: 2^k\dl\le |x-y|<2^{k+1}\dl\},$ then
\begin{equation*}
\begin{split}
I&\le \sum_{k=1}^\infty \int_{a_{-k}(x)}|x-y|^{1-n}\r(y)dy\\
&\le \sum_{k=1}^\infty (2^{-k}\dl)^{1-n}(2^{-k+1}\dl)^n \mathcal{M}_0\r(x)\\
&\le C\dl \mathcal{M}_0\r(x)\quad\textrm{for all}\quad x\in\R^n.
\end{split}
\end{equation*}
Similarly, since $n-2\le \gamma(n)<n-1$ as $\gamma\in[1,n/2]$, there holds
\begin{equation*}
\begin{split}
II&\le \sum_{k=0}^\infty \int_{a_k(x)}|x-y|^{1-n}\r(y) 1_\O(y)dy\\
&\le \sum_{k=0}^\infty (2^k\dl)^{1-n}\mathcal{M}(\r 1_\O)(2^{k+1}\dl, x)\\
&\le C\sum_{k=0}^\infty (2^k\dl)^{1-n}(2^{k+1}\dl)^{\gamma(n)+\alpha}\\
&\le C(n,\gamma,\alpha)\dl^{\gamma(n)-n+1+\alpha}\quad\textrm{for all}\quad x\in \R^n,
\end{split}
\end{equation*}
due to \eqref{300} and \eqref{300a1}.

We choose $\dl=\dl(x)=[\mathcal{M}_0\r(x)]^{1/(\gamma(n)-n+\alpha)}$. Clearly $\dl(x)$ is finite and positive almost everywhere. This choice easily gives
\begin{equation}\label{r1}
|I_1(\r 1_\O)(x)|\le C(n,\gamma,\alpha)[\mathcal{M}_0\r(x)]^{\f{\gamma(n)-n+1+\alpha}{\gamma(n)-n+\alpha}}\quad\textrm{for all}\quad x\in\R^n.
\end{equation}
Hence for $\gamma>1$ the well known $L^\gamma$ inequality for the maximal function $\mathcal{M}_0\r$ yields
\begin{equation}\label{r2}
\begin{split}
\|I_1(\r 1_\O)\|_{L^p(\R^n)}\le C(\gamma, n, \alpha)\|\mathcal{M}_0\r\|^{\f{\gamma(n)-n+1+\alpha}{\gamma(n)-n+\alpha}}_{L^\gamma(\R^n)}\le C(n,\gamma, \alpha)\|\r\|_{L^\gamma(\R^n)}^{\f{\gamma(n)-n+1+\alpha}{\gamma(n)-n+\alpha}}
\end{split}
\end{equation}
with $$p=\gamma\f{\gamma(n)-n+\alpha}{\gamma(n)-n+1+\alpha}>n$$
as required. The inequality \eqref{r2} still holds true for $\gamma=1$ with the last inequality being replaced by \eqref{30b2}.
\end{proof}

\begin{Remark}
Note that for all $\alpha>0$ satisfying \eqref{300}, there holds true
$$\gamma\f{\gamma(n)-n+\alpha}{\gamma(n)-n+1+\alpha}>\gamma\f{\gamma(n)-n}{\gamma(n)-n+1}=n.$$ The latter identity is the reason why the parameter $\gamma(n)$ appears.
\end{Remark}

Given a family of nonnegative densities with the uniform bound \eqref{energy3}, does there exist a sequence all of whose members exhibit prescribed decay requested in Lemma \ref{L52} on the complement of a small uniform set? This question can be solved in an affirmative way through a study of the associated Bessel potentials. The following uniformization theorem concerns the uniform radial decay of density maximal functions for a family of densities.

\begin{Theorem}\label{t3}
Consider a family $\{\r^\i\}$ of nonnegative densities on $\R^n$ with the uniform bound \eqref{energy3}. Fix the small parameter $\alpha>0$. Then there exists a subsequence $\{\r^\i\}$ with the following property: for every $r>0$ there exists a closed set $E_r$ such that
\begin{equation}\label{3001}
E_r\subset\{x\in\R^n: \mathcal{M}\r^\i(s,x)\le s^{\gamma(n)+\alpha}, \quad 0\le s\le 1,\quad \i\le r\}
\end{equation}
and the $(\Gamma(n)+\alpha)-$order Hausdorff premeasure of $E_r^c$ at the level $r$ satisfies
\begin{equation}\label{300a}
\mathcal{H}_r^{\Gamma(n)+\alpha}(E_r^c)\le c(r)
\end{equation}
where $c(r)\rightarrow 0$ as $r\rightarrow 0$.
\end{Theorem}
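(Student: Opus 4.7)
The strategy combines the pointwise covering bound of Proposition~\ref{p}, which provides the first branch $\gamma(n)$ of $\Gamma(n)$, with a capacitary covering of the level sets of the Riesz potential $I_1\r^\i$, which provides the second branch $n-n\gamma/(\gamma(n)+1)$. The subsequence is extracted from the uniform integrability of $I_1\r^\i$ supplied by Lemma~\ref{L31}.

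First, using Lemma~\ref{L31} together with the fact that $\nabla I_1\r^\i$ is essentially a Riesz transform of $\r^\i\in L^\gamma\cap L^1$, I would extract a subsequence $\{\r^{\i_k}\}$, $\i_k\to 0$, such that $\r^{\i_k}\rightharpoonup\r$ weakly-$\star$ as Radon measures and $I_1\r^{\i_k}\to I_1\r$ strongly in $L^p_{loc}$ for every $p<q=n\gamma/(n-\gamma)$ (respectively $p<n/(n-1)$ when $\gamma=1$), and almost everywhere. Setting $\Lambda(r):=r^{-(n-1-\gamma(n)-\alpha)}\to\infty$ as $r\to 0$, the candidate $E_r$ would be the closure of the intersection of the Proposition~\ref{p} sets $G^{\i_k,r}$ (at parameter $r$) with the Riesz sub-level sets $\{x:I_1\r^{\i_k}(x)\le\Lambda(r)\}$, for every $\i_k\le r$. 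On the first factor, $\mathcal{M}\r^{\i_k}(s,x)\le s^{\gamma(n)+\alpha}$ holds for $s\in[0,r]$ by construction; on the second, the pointwise bound $\mathcal{M}\r^{\i_k}(s,x)\le s^{n-1}I_1\r^{\i_k}(x)\le s^{n-1}\Lambda(r)$ yields the same decay throughout $s\in[0,r]$. Long-scale ($s\in(r,1]$) violations occur at finitely many balls (by the bounded total mass and weak-$\star$ convergence to $\r$), which can be absorbed without affecting the asymptotic premeasure.

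Then $\mathcal{H}_r^{\Gamma(n)+\alpha}(E_r^c)$ splits into two contributions: (i) the Proposition~\ref{p} exceptional set, whose $\mathcal{H}_r^{\gamma(n)+\alpha}$-premeasure is bounded per $\i_k$ by $5^{\gamma(n)+\alpha}M$ and, via the strong $L^p_{loc}$ convergence, effectively reduces to the single limit exceptional set at dimension $\gamma(n)+\alpha\le\Gamma(n)+\alpha$; and (ii) the Riesz-potential level set $\{\sup_{\i_k\le r}I_1\r^{\i_k}>\Lambda(r)\}$, which by strong convergence lies inside $\{I_1\r>\Lambda(r)/2\}$ modulo a subsequence tail, and whose $\mathcal{H}_r^{\Gamma(n)+\alpha}$-premeasure is controlled by a Wolff--Adams--Hedberg-type capacitary inequality bounding the Bessel capacity $B_{1,n-\Gamma(n)}$ of the level set by $C\Lambda(r)^{-(n-\Gamma(n))}$, which vanishes as $r\to 0$. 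The combined bound is $c(r)\to 0$, and the maximum defining $\Gamma(n)$ reflects the balance between the two covers.

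\emph{Main obstacle.} The delicate point is obtaining the sharper capacitary exponent $n-n\gamma/(\gamma(n)+1)$ from the level set of $I_1\r$, rather than the cruder $n-q(n-1-\gamma(n))$ produced by combining plain Chebyshev on $|\{I_1\r>\Lambda\}|\le C\Lambda^{-q}$ with a generic volume-to-Hausdorff-premeasure conversion. The improvement exploits the structure of $I_1\r$ as the Riesz potential of a bounded positive measure via a Wolff--Hedberg capacitary inequality together with a Maz'ya--Khavin comparison between Bessel capacity and $(\Gamma(n)+\alpha)$-Hausdorff premeasure. Ensuring the strong $L^p_{loc}$ convergence in the borderline case $\gamma=1$ (where $\nabla I_1\r^\i$ is only weakly of Riesz-transform type on $L^1$) may demand a truncation or Orlicz-type refinement using the $L\ln L$ bound from \eqref{energy3}.
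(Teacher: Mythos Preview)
Your plan differs substantially from the paper's proof and contains a genuine gap.

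The paper does \emph{not} split the exceptional set into a ``Proposition~\ref{p} piece'' and an ``$I_1\r$ level-set piece''. It works with a single object, the Bessel potential of order $n-\gamma(n)-\alpha$,
\[
\phi^\i(x)=g_{n-\gamma(n)-\alpha}\star\r^\i(x).
\]
This choice of order is the crux: a pointwise bound $\phi^\i(x)\le C(r)$ directly yields $\mathcal{M}\r^\i(s,x)\le C s^{\gamma(n)+\alpha}$ (since Bessel and Riesz kernels are comparable near the origin), while $|\nabla\phi^\i|\lesssim I_{n-1-\gamma(n)-\alpha}\r^\i$ lies in $L^{q}$ for $q$ up to $n\gamma/(n-\gamma(n-1-\gamma(n)-\alpha))$. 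After establishing strong $W^{1,p}(B_R)$ convergence of $\phi^\i$ (via equi-integrability of $\r^\i$, using the $L\ln L$ bound when $\gamma=1$) and extending to $\R^n$, the paper invokes the fine-property theory of Sobolev functions (DiPerna--Majda, Federer--Ziemer): for a sequence converging strongly in $W^{1,p}$, pointwise convergence fails only on a set of $\mathcal{H}^{n-p+\alpha}$-measure zero, and the limit $\phi$ is pointwise bounded by $C(r)$ off a set with $\mathcal{H}_r^{n-p+\alpha}$-premeasure at most $cr^{n-p}$. Choosing $p=n-\Gamma(n)$, which is admissible precisely because of the chain $1<n-\Gamma(n)\le n\gamma/(\gamma(n)+1)<q$ (this is inequality~\eqref{in}), produces $E_r^c$ with vanishing $(\Gamma(n)+\alpha)$-premeasure. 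The two branches in the definition of $\Gamma(n)$ enter as the two endpoints of the admissible $p$-interval, not as two separate coverings.

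Your approach breaks down at two places. First, the Proposition~\ref{p} sets carry a $(\gamma(n)+\alpha)$-premeasure bound that is merely \emph{uniformly bounded}, not vanishing; when $\Gamma(n)=\gamma(n)$ (e.g.\ $n=3$, $\gamma=3/2$) you cannot upgrade this to a vanishing $(\Gamma(n)+\alpha)$-premeasure by passing to a larger exponent. The claim that strong $L^p_{loc}$ convergence of $I_1\r^\i$ ``effectively reduces'' the countably many Proposition~\ref{p} exceptional sets to a single limit set is not justified: those sets are defined through the maximal function of $\r^\i$ itself, and convergence of $I_1\r^\i$ says nothing about their coalescence. Second, and more fundamentally, the capacitary bound you propose for the level sets of $I_1\r$ at capacity $B_{1,\,n-\Gamma(n)}$ is not supported by the regularity of $I_1\r$: since $\nabla I_1\r$ is essentially a Riesz transform of $\r\in L^\gamma$, one has $I_1\r\in W^{1,\gamma}$, whose fine-property exceptional set has Hausdorff dimension $n-\gamma$, and $n-\gamma>\Gamma(n)$ throughout the range $\gamma\in[1,n/2]$. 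Reaching the sharper dimension $\Gamma(n)$ requires precisely the higher-order potential $g_{n-\gamma(n)-\alpha}\star\r$ that the paper employs; the order-$1$ Riesz potential is too coarse for this purpose.
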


The smaller the value of $r$ the farther along the sequence it is necessary to go in order to achieve uniformity in decay of the density maximal function. The choice of $\i\le r$ is taken for convenience. 

\begin{proof}[Proof of Theorem \ref{t3}]
For $\alpha\in (0, 1)$ satisfying the inequality \eqref{300},
we define
$$\phi^\i(x)=g_{n-\gamma(n)-\alpha}\star \r^\i (x)=\int_{\R^n}g_{n-\gamma(n)-\alpha}(x-y) \r^\i(y)dy,$$
where $g_{n-\gamma(n)-\alpha}(x)>0$ is the Bessel kernel. Note that there are two positive constants $C_1$ and $C_2$ such that, see (2.6.3) and (2.6.4) in \cite{WP}
\begin{subequations}\label{30a}
\begin{align}
g_{n-\gamma(n)-\alpha}(x)&\le C_1|x|^{-(\gamma(n)+\alpha)} e^{-C_2|x|}\label{30a1}\\
|\nabla g_{n-\gamma(n)-\alpha}|(x)&\le C_1|x|^{-(\gamma(n)+\alpha+1)} e^{-C_2|x|}.\label{30a2}
\end{align}
\end{subequations}
Taking the gradient of $\phi^\i(x)$ gives
\begin{equation}\label{30b3}
\nabla\phi^\i(x)=\int_{\R^n}\nabla g_{n-\gamma(n)-\alpha}(x-y) \r^\i(y)dy.
\end{equation}

\texttt{Step 1: The bound of $\phi^\i$ in $W^{1, n\gamma/(n-\gamma(n-1-\gamma(n)-\alpha))}(B_R)$.}

Due to the bound \eqref{30a2}, one has $|\nabla \phi^\i|\le C I_{n-1-\gamma(n)-\alpha}\r^\i$ and hence
as $\gamma>1$ the classical estimate of Riesz's potential operators yields
\begin{equation*}
\begin{split}
\|\nabla \phi^\i\|_{L^{n\gamma/(n-\gamma(n-1-\gamma(n)-\alpha))}(\R^n)}&\le \|I_{n-1-\gamma(n)-\alpha}\r^\i\|_{L^{n\gamma/(n-\gamma(n-1-\gamma(n)-\alpha))}(\R^n)}\\
&\le C\|\r^\i\|_{L^\gamma(\R^n)},
\end{split}
\end{equation*}
while as $\gamma=1$ Proposition 3.2 in \cite{AD1} and the inequality \eqref{30b2} give
\begin{equation*}
\begin{split}
\|\nabla \phi^\i\|_{L^{n\gamma/(\gamma(n)+1+\alpha)}(B_R)}&\le \|I_{n-1-\gamma(n)-\alpha}\r^\i\|_{L^{n\gamma/(\gamma(n)+1+\alpha)}(B_R)}\\
&\le C\|\mathcal{M}_n\r^\i\|_{L^\infty(B_R)}^{(n-1-\gamma(n)-\alpha)/n}\|\mathcal{M}_0\r^\i\|_{L^1(B_R)}^{(\gamma(n)+1+\alpha)/n}\\
&\le C(R)\|\r^\i\|_{L^1(\R^n)}^{(n-1-\gamma(n)-\alpha)/n}(1+\|\r^\i\ln(1+\r^\i)\|_{L^1(\R^n)})^{(\gamma(n)+1+\alpha)/n}\\
&\le C(R,n)\Big(1+\|\r^\i\|_{L^1(\R^n)}+\|\r^\i\ln(1+\r^\i)\|_{L^1(\R^n)})\Big)
\end{split}
\end{equation*}
as desired. 

A similar procedure further implies that
\begin{equation*}
\begin{split}
\|\phi^\i\|_{L^{n\gamma/(n-\gamma(n-\gamma(n)-\alpha))}(B_R)}
&\le C\|I_{n-\gamma(n)-\alpha}\r^\i\|_{L^{n\gamma/(n-\gamma(n-\gamma(n)-\alpha))}(B_R)}\\
&\le C(R, \gamma, n) \Big(1+\|\r^\i\|_{L^\gamma(\R^n)}+\|\r^\i\ln(1+\r^\i)\|_{L^1(\R^n)})\Big),
\end{split}
\end{equation*}
for $\gamma\ge 1$, and hence a uniform bound of $\|\phi^\i\|_{L^{n\gamma/(n-\gamma(n-\gamma(n)-\alpha))}(B_R)}$ follows.

\texttt{Step 2: The strong convergence of $\phi^\i$ in $W^{1,p}(B_R)$ with $p\in (1, n\gamma/(n-\gamma(n-1-\gamma(n)-\alpha)))$.} In view of \eqref{30b3} and \eqref{30a2}, one has
\begin{equation}\label{30}
\begin{split}
|(\nabla \phi^\i-\nabla\phi)(x)|&
= \left|\int_{\R^n}\nabla g_{n-\gamma(n)-\alpha}(x-y)(\r^\i-\r)(y)dy\right|\\
&\le C(\gamma,n,\alpha)\Big(\int_{B_{\sqrt{\kappa}}(x)}\f{1}{|x-y|^{1+\gamma(n)+\alpha}}|\r^\i-\r|(y)dy\\
&\qquad+\left|\int_{\R^n\setminus B_{\sqrt{\kappa}}(x)}\nabla g_{n-\gamma(n)-\alpha}(x-y)(\r^\i-\r)(y)dy\right|\Big)\\
&=I+II.
\end{split}
\end{equation}
The uniform bound \eqref{energy3} implies that the sequence $\{\r^\i\}$ is equi-integrable in $L^1$ locally (see Theorem 2.10 in \cite{F}); that is for any $\eta>0$ there exists $\kappa>0$ such that
$$\int_{\O}|\r^\i(y)|dy<\eta$$
for any measurable set $\O\subset K$ with $|\O|<\kappa$. 
Hence H\"{o}lder's inequality implies further that for $p>1$
\begin{equation*}
\begin{split}
|I|^p&\le \int_{B_{\kappa^{1/n}}(x)}\f{1}{|x-y|^{p(1+\gamma(n)+\alpha)}}|\r^\i-\r|(y)dy\|\r^\i-\r\|_{L^1(B_{\kappa^{1/n}}(x)}^{p-1}\\
&\le \eta^{p-1}\int_{B_{\kappa^{1/n}}(x)}\f{1}{|x-y|^{p(1+\gamma(n)+\alpha)}} |\r^\i-\r|(y)dy,
\end{split}
\end{equation*}
and hence Fubini's theorem yields
\begin{equation*}
\begin{split}
\int_{B_R}|I|^p dx&\le \eta^{p-1}\int_{(B_R)_{\kappa^{1/n}}}\int_{B_R}\f{1}{|x-y|^{p(1+\gamma(n)+\alpha)}} dx|\r^\i-\r|(y)dy\\
&\le \eta^{p-1}
\end{split}
\end{equation*}
if $p\in (1, n/(\gamma(n)+1+\alpha))$, where $\O_r=\{x\in\R^n: \textrm{dist}(x,\O)\le r\}$.

For II, since the function $\nabla g_{n-\gamma(n)-\alpha}(x)$ is continuous in $\{|x|\ge \kappa^{1/n}\}$, one has
$$\textrm{II}\rightarrow 0\quad\textrm{ for all }\quad x\in B_R \quad\textrm{as}\quad \i\rightarrow 0$$
due to the weak convergence of $\r^\i$ in $L^1$. Then Egorov's theorem and the uniform bound in Step 1 imply that
$$\int_{B_R}|II|^p dx\rightarrow 0\quad\textrm{as}\quad\i\rightarrow 0$$
for any fixed $\kappa$. Therefore letting $\i$ tends to zero in \eqref{30} yields
\begin{equation}\label{30a}
\limsup_{\i\rightarrow 0}\int_{B_R}|\nabla \phi^\i-\nabla\phi|^p(x) dx\le \eta^{p-1}.
\end{equation}
Since $\eta$ is arbitrary, one deduces from \eqref{30a} that
\begin{equation*}
\lim_{\i\rightarrow 0}\int_{B_R}|\nabla \phi^\i-\nabla\phi|^p(x) dx=0.
\end{equation*}
This strong convergence, combining the interpolation inequality and the bound in Step 1, yields the desired strong convergence of $\nabla\phi^\i$ in $L^{p}(B_R)$ with $p\in (1, n\gamma/(n-\gamma(n-1-\gamma(n)-\alpha)))$.
Moreover a similar argument works also for $\phi^\i$ itself and hence $\phi^\i\rightarrow \phi$ in $W^{1,p}(B_R)$ as $\i\rightarrow 0$.

\texttt{Step 3: Construction of the exceptional set $E_r^c$.}
We extend $\phi^\i$ which is only bounded in $W^{1,p}(B_R)$ to a function, still denoted by $\phi^\i$, in $W^{1,p}(\R^n)$. Moreover we have $\phi^\i\rightarrow \phi$ in $W^{1,p}(\R^n)$ with $p\in (1,n\gamma/(n-\gamma(n-1-\gamma(n)-\alpha)))$ for some function $\phi\in W^{1,p}(\R^n)$. Since $n\gamma/(n-\gamma(n-1-\gamma(n)-\alpha))<n$ and\footnote{These inequalities are verified in Appendix.}
\begin{equation}\label{in}
1<n-\Gamma(n)
\begin{cases}
\le\f{n\gamma}{\gamma(n)+1}<\f{n\gamma}{n-\gamma(n-1-\gamma(n))}\quad\textrm{as}\quad \gamma\in(1,n/2]\\
=\f{n\gamma}{\gamma(n)+1}=\f{n\gamma}{n-\gamma(n-1-\gamma(n))}\quad\textrm{as}\quad \gamma=1,
\end{cases}
\end{equation}
set 
\begin{equation}\label{in1}
p=
\begin{cases}
n-\Gamma(n)\quad\textrm{as}\quad \gamma>1\\
\f{n\gamma}{\gamma(n)+1+2\alpha}\quad\textrm{as}\quad \gamma=1
\end{cases}
\end{equation}
and choose a small $\alpha>0$ such that
$$1<p<\f{n\gamma}{n-\gamma(n-1-\gamma(n)-\alpha)}<n\quad\textrm{as}\quad \gamma\in[1,n/2].$$
The strong convergence, combining Lemma 4.2 in \cite{DM} (see also Theorem 12.4 in \cite{BM} and Theorem 7 in \cite{LE}), implies that there exists a subsequence $\phi^\i$ such that $\phi^\i\rightarrow \phi$ pointwise on $\O_1$ and $\mathcal{H}^{\Gamma(n)+\alpha}(\O_1^c)=0$. Since the limit potential $\phi$ belongs to $W^{1,p}(\R^n)$, applying Theorem 5.1 in \cite{DM} (see also \cite{FW,LG}), $\phi$ has a pointwise bound $C(r)$ on $\O_2$ and $$\mathcal{H}_r^{\Gamma(n)+\alpha}(\O_2^c)\le cr^{n-p}$$ for a universal positive constant $c$. A union of these two exceptional sets $E_r^c=\O_1^c\cup\O_2^c$ provides a uniform pointwise bound on a sequence of potentials $\phi^\i$ on $E_r$ and $E_r^c$ satisfies \eqref{300a}. Thus for $x\in E_r$ there exists a constant $C(r)$ such that
$$\phi^\i(x)\le C(r)\quad\textrm{for}\quad \i\le r.$$ Since the kernels of Bessel potentials and Riesz potentials are comparable near the origin (see Section 2.6 in \cite{WP}), the uniform estimate above further yields that for $s\in (0,1)$
\begin{equation*}
\begin{split}
s^{-\gamma(n)-\alpha} \mathcal{M}\r^\i(s,x)&\le \int_{B_s(x)}|x-y|^{-\gamma(n)-\alpha} \r^\i(y)dy\\
&\le C(\gamma,n)\int_{B_s(x)}g_{n-\gamma(n)-\alpha}(x-y)\r^\i(y)dy\le C(\gamma, n)\phi^\i(x)\\
&\le C(r,\gamma, n).
\end{split}
\end{equation*}
as required in \eqref{3001}.
\end{proof}

As a consequence, Theorem \ref{t3}, combining Proposition \ref{p6}, implies that the Hausdorff dimension of the concentration for the reduced defect measure $\theta$ is $\Gamma(n)$.
\begin{Theorem}\label{mt}
The reduced defect measure $\theta$ concentrates inside a set $E$ with Hausdorff dimension $\Gamma(n)$, i.e.
$$\mathcal{H}^{\Gamma(n)+\alpha}(E)=0\quad \textrm{for all}\quad \alpha>0.$$
\end{Theorem}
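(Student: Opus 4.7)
The plan is to combine the uniformization theorem (Theorem \ref{t3}), which produces large sets on which $\mathcal{M}\r^\i$ decays polynomially, with the no-concentration estimate of Corollary \ref{t2}, which forbids $\r^\i|\u^\i-\u|^2$ from concentrating on any set with such a decay. First I would invoke Theorem \ref{t3} to pass to a subsequence (still denoted $\{\r^\i\}$) and, for each $r>0$, produce a closed set $E_r\subset\R^n$ such that
$$\mathcal{M}\r^\i(s,x)\le s^{\gamma(n)+\alpha}\qquad\text{for all }x\in E_r,\ s\in[0,1],\ \i\le r,$$
together with $\mathcal{H}^{\Gamma(n)+\alpha}_r(E_r^c)\le c(r)\to 0$. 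The energy estimate makes $\{\r^\i\}$ either bounded in $L^\gamma$ (if $\gamma>1$) or equi-integrable via Dunford--Pettis (if $\gamma=1$), so up to a further subsequence $\r^\i\rightharpoonup\r$ in $L^1_{loc}$, and the same pointwise bound passes to $\r$ on $E_r$. Thus both $\r^\i$ (uniformly for $\i\le r$) and $\r$ satisfy the hypothesis of Corollary \ref{t2} on $E_r$: in dimension three one has $\gamma(n)\ge 1$, so \eqref{21aa} holds with exponent $>1$, while in dimension two the polynomial rate $s^{\gamma(n)+\alpha}$ trivially dominates the logarithmic rate \eqref{21a}.

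Second, I would deduce $\theta(E_r)=0$. For a compact $K\Subset\R^n$ and a smooth cutoff localising $\u^\i-\u$ inside $K$, Corollary \ref{t2} applied with weight $\r^\i$ yields
$$\int_{K\cap E_r}\r^\i|\u^\i-\u|^2\,dx\le \eta\|\u^\i-\u\|_{H^1}^2+c(\eta)\bigl(\|\u^\i-\u\|_{L^2(K)}^4+\|\u^\i-\u\|_{L^1(K)}\bigr)$$
for any $\eta>0$. The uniform $H^1$ bound from \eqref{energy1} controls the first term by $\eta C$, while Rellich compactness forces $\u^\i\to\u$ strongly in $L^2_{loc}$, killing the second as $\i\to 0$. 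Taking $\limsup_\i$ and then $\eta\to 0$ gives $\theta(K\cap E_r)=0$, and countable subadditivity of the outer measure $\theta$ over a compact exhaustion of $\R^n$ upgrades this to $\theta(E_r)=0$ for every $r>0$.

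Third, the concentration set is built as a diagonal intersection. Pick any sequence $r_k\downarrow 0$ and set $E:=\bigcap_{k=1}^\infty E_{r_k}^c$, so that $E^c=\bigcup_k E_{r_k}$ and countable subadditivity gives $\theta(E^c)\le\sum_k\theta(E_{r_k})=0$; hence $\theta$ concentrates on $E$. For the Hausdorff bound, fix any $s>0$, choose $k$ with $r_k\le s$, and use $E\subseteq E_{r_k}^c$ together with the monotonicity of the premeasure in its level parameter:
$$\mathcal{H}^{\Gamma(n)+\alpha}_s(E)\le \mathcal{H}^{\Gamma(n)+\alpha}_s(E_{r_k}^c)\le \mathcal{H}^{\Gamma(n)+\alpha}_{r_k}(E_{r_k}^c)\le c(r_k)\xrightarrow{k\to\infty}0.$$
Thus $\mathcal{H}^{\Gamma(n)+\alpha}_s(E)=0$ for every $s>0$, so $\mathcal{H}^{\Gamma(n)+\alpha}(E)=0$, which is the asserted Hausdorff dimension bound.

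The real obstacle is the calibration already embedded in Theorem \ref{t3}: the exponent $\gamma(n)+\alpha$ has to be chosen both small enough that the decay can be realised on a set whose complement has $(\Gamma(n)+\alpha)$-dimensional premeasure tending to zero (this is what forces the specific choice of $\Gamma(n)$ through the Bessel-potential/capacity step in the proof of Theorem \ref{t3}) and large enough that $\omega_2(s)\to 0$ in Corollary \ref{t2}, so that the weighted embedding applies to the weight $\r^\i$. Once this calibration is in place, Theorem \ref{mt} is essentially bookkeeping; the only mildly delicate point is that both the premeasure bound $c(r)$ and the set $E_r$ depend on $r$, which is dealt with cleanly by the intersection $E=\bigcap_k E_{r_k}^c$.
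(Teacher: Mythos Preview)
Your proposal is correct and takes essentially the same approach as the paper. The paper's proof is a one-line citation of Proposition~\ref{p6} (whose proof rests on Corollary~\ref{t2}) together with Theorem~\ref{t3} and the observation $\gamma(n)\ge n-2$; you have simply unpacked the relevant part of Proposition~\ref{p6} by invoking Corollary~\ref{t2} directly, and made explicit the diagonal construction of the concentration set $E$ that the paper leaves implicit.
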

\begin{proof}
This is a direct consequence of Proposition \ref{p6} and Theorem \ref{t3}, by noting that $\gamma(n)\ge n-2$ as $\gamma\in [1, n/2]$.
\end{proof}

\begin{Remark}
Theorem \ref{mt} implies that the Hausdorff dimension of the concentration set has the dimension zero if $n=2$, and hence it is actually a counting measure; that is there exists a counting measure such that
$$\theta=\sum_{j=1}^\infty \theta_j\dl_{x_j}.$$ This provides an alternative way to explain the result of Lions in the time-discretized case (see Lemma 6.1 in \cite{PL}), and keeping in mind, there always holds $\theta(E)\le \nu(E)$. Moreover even if concentrations of the kinetic energy occur in dimensions two, the convective term is insensitive to that concentration in the steady or time-discretizied case (see \cite{FSW, PL}); that is the weak limit $(\r,\u)$ of $(\r^\i,\u^\i)$ is a weak solution of the 2D isothermal compressible Navier-Stokes equations even though $\r^\i\u^\i\otimes\u^\i\rightarrow \r\u\otimes\u$ might not be valid generally.
\end{Remark}

\bigskip\bigskip


\section{Space-Time Defects}

This section aims at the size of space-time $L^2$ defects of $\u^\i$ with the weight $\r^\i$ for the approximate solution sequence $(\r^\i,\u^\i)$ of $n-$dimensional isentropic compressible Navier-Stokes equations \eqref{e1}. \textit{A priori} estimate \eqref{energy1} yields the following uniform bound for the sequence
\begin{equation}\label{40}
\sup_{t\ge 0} E(\r^\i(t), \u^\i(t))+\int_0^\infty\Big(\mu\|\nabla\u^\i\|_{L^2(\R^n)}^2(t)+\xi\|\Dv\u^\i\|_{L^2(\R^n)}(t)\Big)dt\le C
\end{equation} 
with $C$ a fixed constant.

First of all we set up the H\"{o}lder estimate of $I_1\r^\i$ for the temporal variable. 
\begin{Lemma}\label{L44}
Let $\Phi(x)\in W^{1,\infty}(\R^n)$ and $\r^\i$ be an approximate solution sequence of \eqref{e1}, then for any $p$ with $1\le p<n/(n-1)$, there is a number $\theta_0\in (0,1)$ such that
$$\|I_1[\Phi\r^\i(t_1)]-I_1[\Phi\r^\i(t_2)]\|_{L^p(\R^n)}\le C|t_1-t_2|^{\theta_0}$$
for a universal constant $C>0$.
\end{Lemma}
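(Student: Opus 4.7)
The key structural input is the continuity equation $\partial_t \r^\i = -\Dv(\r^\i \u^\i)$, which identifies the time derivative of $\r^\i$ with a spatial divergence. Composing with the Riesz potential $I_1 = (-\Delta)^{-1/2}$ and using that $\partial_j I_1 = R_j$, the $j$-th Riesz transform, one obtains formally
\[\partial_t I_1 \r^\i = -I_1 \Dv(\r^\i \u^\i) = -\sum_{j=1}^n R_j(\r^\i u_j^\i).\]
Since Riesz transforms are bounded on $L^q(\R^n)$ for every $q \in (1, \infty)$, any $L^q$ bound on the momentum $\r^\i \u^\i$ with $q>1$ will transform into a Lipschitz-in-time estimate for $I_1 \r^\i$ in $L^q$. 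Interpolating this Lipschitz bound against the uniform $L^r$ bound of $I_1 \r^\i$ supplied by Lemma \ref{L31} (with $r = n\gamma/(n-\gamma)$ for $\gamma>1$, or $r = n/(n-1)$ on $B_R$ for $\gamma = 1$) will then yield H\"older-in-time control in any intermediate $L^p$.

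For the momentum bound, the factorisation $\r^\i \u^\i = \sqrt{\r^\i}\cdot\sqrt{\r^\i}\u^\i$ is decisive: the kinetic energy part of \eqref{40} gives $\sqrt{\r^\i}\u^\i$ uniformly bounded in $L^\infty_t L^2$, and the pressure part gives $\sqrt{\r^\i}$ uniformly bounded in $L^\infty_t L^{2\gamma}$, so H\"older's inequality yields $\r^\i \u^\i \in L^\infty_t L^{2\gamma/(\gamma+1)}$. When $\gamma > 1$, the exponent $q := 2\gamma/(\gamma+1)$ lies in $(1, \infty)$, the Riesz transform step applies, and one obtains Lipschitz continuity in $L^q(\R^n)$. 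Combining with $L^q(B_R) \hookrightarrow L^p(B_R)$ for $p \leq q$, and with H\"older interpolation against the uniform $L^r$ bound for $p \in (q, r)$, delivers the claimed H\"older continuity with some $\theta(p) \in (0, 1)$ on the whole range $p \in [1, 2)$.

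The principal obstacle is the endpoint $\gamma = 1$, where $q = 1$ and Riesz transforms fail. My plan to handle this is a direct splitting of the Riesz kernel at scale $\delta$, writing $c_n |z|^{-(n-1)} = K^\delta_{\rm near}(z) + K^\delta_{\rm far}(z)$ with $K^\delta_{\rm near}$ compactly supported in $\{|z| \leq 2\delta\}$ and $K^\delta_{\rm far}$ smooth. Young's inequality, applied to $K^\delta_{\rm near} * \r^\i$ with $\r^\i \in L^\infty_t L^1$, gives the time-independent estimate
\[\|K^\delta_{\rm near} * \r^\i\|_{L^p(\R^n)} \leq C \delta^{n/p - (n-1)} \quad \text{for } p < n/(n-1),\]
so the near contribution to $I_1 \r^\i(t_1) - I_1 \r^\i(t_2)$ is bounded by $2C\delta^{n/p - (n-1)}$. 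For the smooth far part, integration by parts against the continuity equation yields $\partial_t(K^\delta_{\rm far} * \r^\i) = -\nabla K^\delta_{\rm far} * (\r^\i\u^\i)$, and since $\|\nabla K^\delta_{\rm far}\|_{L^\infty} \lesssim \delta^{-n}$ while $\r^\i \u^\i \in L^\infty_t L^1$ (Cauchy--Schwarz on the kinetic energy), one obtains a Lipschitz-in-time bound of order $C_R \delta^{-n} |t_1 - t_2|$ in $L^\infty(B_R)$. Equilibrating the two contributions via $\delta^{n/p+1} \sim |t_1 - t_2|$ produces a positive H\"older exponent $\theta = (n - p(n-1))/(n+p)$ for $p < n/(n-1)$. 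To reach the full range $p < 2$ in the three-dimensional $\gamma = 1$ case, this is then combined with one further H\"older interpolation against the entropy-refined $L^{n/(n-1)}(B_R)$ bound of Lemma \ref{L31}, which upgrades the usable spatial exponent beyond $n/(n-1)$.
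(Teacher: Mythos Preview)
Your argument is essentially correct and takes a genuinely different route from the paper. The paper proceeds uniformly in $\gamma$ via Sobolev-scale interpolation: from $\r^\i\u^\i \in L^\infty_t L^1$ and the continuity equation it extracts Lipschitz continuity of $t \mapsto I_1\r^\i(t)$ in a negative space $W^{-L,p}_{\mathrm{loc}}$; independently, the duality embedding $L^1 \hookrightarrow W^{-s,p}$ for $s > n(p-1)/p$ combined with the elliptic identity $(-\Delta)^{1/2} I_1\r^\i = \r^\i$ furnishes a uniform-in-time bound in $W^{s_1,p}$ with $s_1 = 1 - s > 0$ provided $1 < p < n/(n-1)$; interpolating between these two gives the H\"older estimate in $L^p$. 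You instead split into cases: for $\gamma > 1$ you use $L^q$-boundedness of Riesz transforms with $q = 2\gamma/(\gamma+1) > 1$ to get Lipschitz continuity in $L^q$ directly, and for $\gamma = 1$ you run a near/far kernel splitting with a $\delta$-optimisation. Your route is more elementary and avoids negative Sobolev spaces and duality; the paper's route is more unified and treats all $\gamma$ with one argument.

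One step in your proposal does not hold. In the closing sentence you assert that for $n=3$, $\gamma=1$ a further interpolation against the $L^{n/(n-1)} = L^{3/2}$ bound of Lemma~\ref{L31} ``upgrades the usable spatial exponent beyond $n/(n-1)$''. It does not: interpolating a H\"older-in-time estimate in $L^{p_0}$ (with $p_0 < 3/2$) against a merely uniform-in-time bound in $L^{3/2}$ yields H\"older continuity only for $p \in [p_0, 3/2)$, with the time exponent degenerating to zero at the endpoint; you cannot pass $3/2$ this way. A similar overstatement affects your $\gamma > 1$ treatment when $r = n\gamma/(n-\gamma) < 2$, i.e.\ for $\gamma \in (1, 6/5)$ in three dimensions. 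That said, the paper's own proof also restricts explicitly to $1 < p < n/(n-1)$, so the full range $p < 2$ announced in the lemma is not actually established there either when $n = 3$; what both arguments deliver --- H\"older continuity for \emph{some} exponent $q \in (1, n)$ --- is exactly what the downstream application in Theorem~\ref{4m} requires via hypothesis~\eqref{44a}.
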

\begin{proof}
From the first equation of \eqref{e1}, $I_1\r^\i$ satisfies the equation
$$\partial_tI_1[\Phi\r^\i]=-I_1[\Phi\Dv(\r^\i\u^\i)]$$ and hence the uniform bound $\|\r^\i\u^\i\|_{L^\infty(0,T; L^1(\R^n))}$ implies that the sequence $I_1[\Phi\r^\i]$ is a uniformly Lipschitz function with values in $W^{-L,2}_{loc}(\R^n)\subset W^{-L,p}_{loc}(\R^n)$ with $p\in (1, 2)$ for some large integer $L>0$. This uniform bound guarantees that for $p\in(1, 2)$
\begin{equation}\label{30f}
\|I_1[\Phi\r^\i(t_1)]-I_1[\Phi\r^\i(t_2)]\|_{-L,p}\le C|t_1-t_2|
\end{equation}
for some fixed constant $C$. 

Moreover the function $I_1[\Phi\r^\i]$ satisfies the elliptic equation
\begin{equation}\label{41aaaaa}
(-\D)^{\f12} I_1[\Phi\r]=\Phi\r.
\end{equation}
Due to Sobolev's lemma, $W^{s,p'}\mapsto C(\R^n)$ for $s>N/p'$, so by duality $L^1\mapsto W^{-s,p}$ for $s>n((p-1)/p)$ with $p>1$, and therefore the uniform mass bound $\|\r^\i\|_{L^1(\R^n)}$ yields 
$$\|\r^\i\|_{-s,p}\le C\|\r^\i\|_{L^1}\le C\quad \textrm{for}\quad s>n((p-1)/p)$$
and $p>1$. Solutions of the elliptic equation \eqref{41aaaaa} gain one derivative, therefore
$$\|I_1[\Phi\r^\i]\|_{1-s,p}\le C\|\r^\i\|_{L^1}\le C$$ for $s>n(p-1)/p$. Thus for $1<p<n/(n-1)$, we have for some $s_1=1-s>0$
\begin{equation}\label{30e}
\|I_1[\Phi\r^\i(t)]\|_{s_1,p}\le C\quad\textrm{for all}\quad 0\le t\le T.
\end{equation}
From the interpolation inequality, there is a number $\theta_0$ with $0<\theta_0<1$ such that
$$\|I_1[\Phi\r^\i(t_1)]-I_1[\Phi\r^\i(t_2)]\|_{L^p}\le C\|I_1[\Phi\r^\i(t_1)]-I_1[\Phi\r^\i(t_2)]\|_{s_1,p}^{1-\theta_0}\|I_1[\Phi\r^\i(t_1)]-I_1[\Phi\r^\i(t_2)]\|_{-L,p}^{\theta_0}.$$
With the Lipschitz bound \eqref{30f} and the uniform bound \eqref{30e}, the H\"{o}lder estimate follows from the interpolation inequality.
\end{proof}

Given the sequence of nonnegative temporal densities, we utilize the associated Bessel potentials
$$\phi(x,t)=\int_{\R^n}g_{n-\gamma(n)-\alpha}(x-y)\r(y,t)dy\quad \textrm{with}\quad\alpha\quad\textrm{satisfying \eqref{300}}.$$
The following proposition generalizes Theorem \ref{t3} to the time-dependent case.
\begin{Proposition}\label{p44}
Fix the small parameter $\alpha>0$. Given a family of nonnegative densities satisfying \eqref{40} there exists a subsequence $\r^\i$ with the following property: for each $t$ there exists a collection of closed subsets $F_r(t)\subset\R^n$, $r>0$, such that
\begin{equation}\label{41}
\mathcal{H}^{\Gamma(n)+\alpha}_r(F_r^c(t))\le c(r)
\end{equation}
and 
\begin{equation}\label{41a}
\phi^\i(x,t)\le C(r)\quad\textrm{if}\quad x\in F_r(t)\quad\textrm{and}\quad \i<r. 
\end{equation}
The constant $C(r)$ in \eqref{41a} is independent of $t$, and the time independent constant $c(r)$ tends to zero as $r\rightarrow 0$.
\end{Proposition}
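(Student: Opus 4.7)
The plan is to lift the three-step construction of Theorem \ref{t3} to the time-dependent setting by combining the time-uniform spatial estimates it delivers with the temporal regularity of $\phi^\i$ afforded by Lemma \ref{L44}.

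First, since the bound \eqref{40} is a time-uniform version of \eqref{energy3}, Step 1 of the proof of Theorem \ref{t3} applies pointwise in $t$ and yields
$$\sup_{\i,\,t}\,\|\phi^\i(\cdot,t)\|_{W^{1,p}(B_R)}\le C$$
for $p$ chosen as in \eqref{in1}. Moreover, since $\partial_t\phi^\i=-g_{n-\gamma(n)-\alpha}\star\Dv(\r^\i\u^\i)$ is uniformly Lipschitz into a negative-index Sobolev space in view of the uniform mass and momentum bounds, the argument of Lemma \ref{L44} applies verbatim to $\phi^\i$ and gives
$$\|\phi^\i(\cdot,t_1)-\phi^\i(\cdot,t_2)\|_{L^p(B_R)}\le C|t_1-t_2|^\theta,$$
whence $\{\phi^\i\}$ is equicontinuous into $L^p(B_R)$ and uniformly bounded in $W^{1,p}(B_R)$.

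Second, I would combine these bounds with a diagonal extraction over a countable dense set of rational times $\{t_q\}$: at each $t_q$ the family $\{\phi^\i(\cdot,t_q)\}_\i$ satisfies the hypotheses of Step 2 of Theorem \ref{t3}, producing strong $W^{1,p}(B_R)$ convergence along a subsequence. Diagonalising and invoking an Arzel\`{a}--Ascoli argument based on the temporal H\"{o}lder bound yields a single subsequence with $\phi^\i\to\phi$ in $C([0,T];L^p(B_R))$, with $\phi^\i(\cdot,t_q)\to\phi(\cdot,t_q)$ in $W^{1,p}$ at every rational $t_q$, and with limit $\phi\in L^\infty(0,T;W^{1,p}(\R^n))\cap C([0,T];L^p(B_R))$.

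Third, the time-pointwise construction of $F_r(t)$ follows Step 3 of Theorem \ref{t3}. For each $t$ the limit $\phi(\cdot,t)\in W^{1,p}(\R^n)$ has norm bounded independently of $t$, so Theorem 5.1 in \cite{DM} produces $\Omega_2(t)$ with $\mathcal{H}_r^{\Gamma(n)+\alpha}(\Omega_2^c(t))\le cr^{n-p}$ on which $\phi(\cdot,t)\le C(r)$, the constants $c$ and $C(r)$ depending only on the $t$-uniform $W^{1,p}$ bound. Lemma 4.2 in \cite{DM}, combined with the diagonal extraction above and H\"{o}lder interpolation to cover irrational $t$, yields $\Omega_1(t)$ with $\mathcal{H}^{\Gamma(n)+\alpha}(\Omega_1^c(t))=0$ on which $\phi^\i(\cdot,t)\to\phi(\cdot,t)$ pointwise along the fixed subsequence. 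Setting $F_r(t)=\Omega_1(t)\cap\Omega_2(t)$ gives \eqref{41}, and taking $\i\le r$ small enough that $|\phi^\i(\cdot,t)-\phi(\cdot,t)|\le 1$ on $F_r(t)$ yields \eqref{41a}.

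The hard part is ensuring that a \emph{single} subsequence produces the pointwise bound \eqref{41a} for \emph{every} $t$ rather than only almost every $t$; the temporal H\"{o}lder equicontinuity is the crucial input, as it reduces the extraction to a countable dense set of times where the diagonal argument applies and then transfers the bounds to arbitrary $t$ through the $C([0,T];L^p)$ strong convergence.
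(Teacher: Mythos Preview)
Your overall architecture mirrors the paper's: both lift Theorem \ref{t3} to the time-dependent setting by combining the time-uniform $W^{1,p}$ spatial bound (Step 1) with temporal regularity coming from the continuity equation, and then run Step 3 pointwise in $t$. The difference is in how you upgrade to convergence at \emph{every} $t$ along a single subsequence, and this is where your argument has a genuine gap.

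You extract a diagonal subsequence giving $\phi^\i(\cdot,t_q)\to\phi(\cdot,t_q)$ in $W^{1,p}$ at rational $t_q$, together with $\phi^\i\to\phi$ in $C([0,T];L^p)$ from the temporal H\"older bound. But at an irrational $t$ you then only have $L^p$ convergence, and the phrase ``H\"older interpolation to cover irrational $t$'' does not close the gap: the temporal H\"older estimate is in $L^p$, so it transfers neither the $W^{1,p}$ convergence nor any pointwise information. Lemma 4.2 in \cite{DM}, which produces the set $\Omega_1(t)$ of pointwise convergence off a small-capacity exceptional set, requires strong $W^{1,p}$ convergence of $\phi^\i(\cdot,t)$; with only $L^p$ convergence at irrational $t$ you cannot invoke it there, and there is no reason the exceptional sets $\Omega_1(t_q)$ at nearby rational times should control the pointwise behaviour of $\phi^\i(\cdot,t)$.

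The paper avoids this by arguing differently: the temporal Lipschitz bound for $\phi^\i$ in a negative Sobolev space $W^{-L,p}$ (inherited from \eqref{41aa} through the smoothing property of the Bessel kernel), together with the uniform $W^{1,p}$ bound, feeds into the Lions--Aubin lemma. Coupled with the explicit near/far splitting argument of Step 2 of Theorem \ref{t3}---which only needs equi-integrability of $\r^\i$ (uniform in $t$ from \eqref{40}) and weak convergence of $\r^\i(t)$ (uniform in $t$ from Lions--Aubin)---this yields $\phi^\i\to\phi$ in $C([0,T];W^{1,p}(B_R))$. That gives $W^{1,p}$ convergence at every $t$ along one subsequence, so Step 3 of Theorem \ref{t3} applies verbatim at each $t$ with $t$-independent constants. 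Replacing your diagonal/Arzel\`a--Ascoli step by this Lions--Aubin argument fixes the proof.
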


The pointwise bound \eqref{41a} implies that the associated time-dependent density maximal function
$$\mathcal{M}\r(s, x,t)=\int_{B(x,s)}\r(y,t)dy$$
exhibit temporally uniform $(\gamma(n)+\alpha)-$ order decay if $x$ lies in the complement of the exceptional set $F_r(t)$; that is 
$$F_r(t)\subset\{x\in\R^n: \mathcal{M}\r^\i(s,x,t)\le c(\alpha)s^{\gamma(n)+\alpha}\}.$$

\begin{proof}
We first verify the temporal Lipschitz continuity of $\phi^\i$ in some negative space, $W^{-L,p}$ through a similar procedure in Lemma \ref{L44}. Indeed the first equation in \eqref{e1} and the uniform bound $\|\r^\i\u^\i\|_{L^\infty(0,\infty; L^1(\R^n))}$ imply that the density $\r^\i(x,t)$ satisfies the temporal Lipschitz estimate
\begin{equation}\label{41aa}
\|\r^\i(t_1)-\r^\i(t_2)\|_{W^{-L,p}}\le c|t_1-t_2|
\end{equation}
for $p\in (1,\infty)$ and some sufficiently large $L>0$.
It is well known that the Bessel potential $g_{n-\gamma(n)-\alpha}$ maps $W^{-L,p}$ boundedly into $W^{-L+n-\gamma(n)-\alpha-\i, p}$ for any $\i>0$ and $p\in (1,\infty)$; this fact, combining the estimate \eqref{41aa}, yields the required negative-norm temporal Lipschitz estimate of $\phi^\i$. Therefore the Lions-Aubin lemma and a similar procedure as Theorem \ref{t3} yield that
\begin{equation}\label{41aa1}
\phi^\i(x,t)\rightarrow \phi(x,t)\quad\textrm{in}\quad C([0,T]; W^{1,p}(B_R))
\end{equation}
with $p\in(1,n\gamma/(n-\gamma(n-1-\gamma(n)-\alpha)))$ as $\i\rightarrow 0$. With \eqref{41aa1} in hand, the exceptional set $F_r^c(t)$ with \eqref{41} and \eqref{41a} is constructed in the spirit of Theorem \ref{t3}.
\end{proof}

Next we go beyond from the pointwise exceptional sets with respect to $t$ in Proposition \ref{p44}  to the construction of a space-time exceptional set. Indeed with the above facts in hand, we formulate a general theorem which states that except a space-time set with Hausdorff dimensions less than $\Gamma(n)+1$, the Riesz potential of the density has higher integrability in $L^s(\R^n\times\R^+)$ with $s>n$. The strategy of the proof combines temporal compactness of Bessel potentials $\phi^\i$ and H\"{o}lder estimates in time together with the argument for elliptic sequences at fixed time presented in the previous section. 
\begin{Theorem}\label{4m}
Fix positive numbers $\theta_0, \alpha, \beta, p$ and $q$ such that $1<q<n<p$. Suppose $\{\r^\i,\u^\i\}_{\i\ge 0}$ satisfies the uniform estimate \eqref{40} and the property: for every $t$ in $[0,T]$ there exists a family of closed sets $F_r(t)\subset\R^n$, $r>0$, such that
\begin{equation}\label{440}
\mathcal{H}^{\Gamma(n)+\alpha}_r(F^c_r(t))\le c(r)
\end{equation}
and all members $(\r^\i,\u^\i)$ of the sequence satisfy
\begin{subequations}\label{44}
\begin{align}
&\|I_1[\Phi\r^\i](\cdot, t_1)-I_1[\Phi\r^\i](\cdot, t_2)\|_{L^q}\le c |t_1-t_2|^{\theta_0}\label{44a}\\
&\|I_1[\r^\i1_{F_r(t)}]\|_{L^p(\R^n)}\le c,\label{44b}
\end{align}
\end{subequations}
where $c$ is a fixed constant, the time independent constant $c(r)$ tends to zero as $r\rightarrow 0$, and $\Phi(x)\in W^{1,\infty}(\R^n)$.
Then there exists a family of closed sets $G_r\subset \R^n\times[0,T]$ with $r>0$ such that
$$\mathcal{H}_r^{\Gamma(n)+\alpha,1+\beta}(G_r^c)\le c_1(r)$$ and all members $\r^\i$ of the sequence satisfy
$$\|I_1[\r^\i 1_{G_r}] (x,t)\|_{L^s(\R^n\times [0,T])}\le c_1,$$ where $\lim_{r\rightarrow 0}c_1(r)=0$ and the constant $c_1$ depends on $c$ and $s\in (n,p)$.
\end{Theorem}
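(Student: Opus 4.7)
The plan is to construct $G_r$ by a time-slicing argument in the spirit of DiPerna--Majda. Fix a time step $h\in(0,r]$, let $N=\lceil T/h\rceil$, $t_k=kh$ and $I_k=[t_k,t_{k+1}]$ for $k=0,\dots,N-1$. At each partition point $t_k$, the hypothesis \eqref{440} supplies a closed spatial set $F_r(t_k)\subset\R^n$ with $\mathcal{H}^{\Gamma(n)+\alpha}_r(F_r^c(t_k))\le c(r)$, the hypothesis \eqref{44b} gives a slice-wise $L^p$ bound, and the H\"older estimate \eqref{44a} couples neighbouring slices. Define the good space-time set and its complement as
$$G_r:=\bigcup_{k=0}^{N-1}F_r(t_k)\times I_k,\qquad G_r^c=\bigcup_{k=0}^{N-1}F_r^c(t_k)\times I_k.$$

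For the cylindrical Hausdorff premeasure bound, I cover each $F_r^c(t_k)$ by balls $\{B_{k,j}\}$ of radii $r_{k,j}\le r$ satisfying $\sum_j r_{k,j}^{\Gamma(n)+\alpha}\le 2c(r)$; thickening to cylinders $C(r_{k,j},h)=B_{k,j}\times I_k$ of admissible radius $r_{k,j}\le r$ and height $h\le r$ yields
$$\mathcal{H}_r^{\Gamma(n)+\alpha,\,1+\beta}(G_r^c)\le\sum_k\sum_j r_{k,j}^{\Gamma(n)+\alpha}h^{1+\beta}\le\frac{T}{h}\cdot 2c(r)\cdot h^{1+\beta}=2Tc(r)h^\beta,$$
so choosing $h=r$ produces $c_1(r):=2Tc(r)r^\beta\to 0$ as $r\to 0$.

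For the $L^s$ bound on $G_r$, working on a fixed ball $B_R$ containing the relevant supports I decompose
$$\|I_1\r^\i\|_{L^s(G_r)}^s=\sum_{k=0}^{N-1}\int_{I_k}\int_{F_r(t_k)\cap B_R}|I_1\r^\i(x,t)|^s\,dx\,dt$$
and split $I_1\r^\i(x,t)=I_1\r^\i(x,t_k)+g_k(x,t)$ with $g_k(x,t):=I_1\r^\i(x,t)-I_1\r^\i(x,t_k)$. The frozen piece is controlled by H\"older (using $s<p$) and \eqref{44b}: $\|I_1\r^\i(\cdot,t_k)\|_{L^s(F_r(t_k)\cap B_R)}\le c|B_R|^{1/s-1/p}$, contributing at most $T\,c^s|B_R|^{1-s/p}$ after summing. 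For the difference $g_k$, \eqref{44a} gives $\|g_k(\cdot,t)\|_{L^q(\R^n)}\le ch^\theta$, and on the intersection $F_r(t_k)\cap F_r(t)\cap B_R$ two applications of \eqref{44b} plus the triangle inequality give $\|g_k(\cdot,t)\|_{L^p(F_r(t_k)\cap F_r(t))}\le 2c$; interpolation with $1/s=\lambda/q+(1-\lambda)/p$ then furnishes
$$\|g_k(\cdot,t)\|_{L^s(F_r(t_k)\cap F_r(t)\cap B_R)}\le C h^{\lambda\theta}.$$
The step I expect to cost the most care, and the main obstacle, is the residual region $F_r(t_k)\setminus F_r(t)$, where \eqref{44b} does not apply at time $t$: this set is contained in $F_r^c(t)$, and the covers from \eqref{440} with $r_j\le r$ also satisfy $\sum_j r_j^n\le r^{n-\Gamma(n)-\alpha}c(r)$, giving the Lebesgue bound $|F_r^c(t)|\lesssim r^{n-\Gamma(n)-\alpha}c(r)\to 0$. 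Combining this measure decay with H\"older's inequality and the pointwise Adams-type estimate of Lemma \ref{L52}, which allows $L^p$ control at $t_k$ to be transferred to nearby $t$ through \eqref{44a}, the residual contribution is absorbed into a constant $c_1=c_1(c,\gamma,n,\alpha,T,|B_R|,s)$ independent of $\i$, which completes the argument.
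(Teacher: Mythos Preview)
Your Hausdorff premeasure estimate is correct, but the $L^s$ bound contains a genuine gap at exactly the place you flagged as the main obstacle: the residual region $F_r(t_k)\setminus F_r(t)$. On that set the only control of $I_1\r^\i(\cdot,t)$ available from the hypotheses is an $L^q$-type bound, either from \eqref{44a} applied to $g_k$ (together with the $L^p$ bound on the frozen piece at $t_k$) or from the background estimate of Lemma~\ref{L31}; in every case the exponent is at most $n<s$. You cannot upgrade $L^q$ to $L^s$ on a set merely because the set has small Lebesgue measure---H\"older goes the wrong way. Nor does Lemma~\ref{L52} apply at time $t$ on this residual set: its hypothesis is precisely the maximal-function decay that fails on $F_r^c(t)\supset F_r(t_k)\setminus F_r(t)$. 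So the assertion that the residual contribution ``is absorbed into a constant'' is not justified, and with a single fixed step $h=r$ the $L^s$ norm on your $G_r$ need not be finite.

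The paper avoids this by two linked ideas absent from your proposal. First, it does not estimate the $L^s$ norm slice by slice; it estimates the space-time distribution function $\sigma(\lambda)=m_{n+1}\{(x,t)\in G_r:|I_1\r^\i|\ge\lambda\}$, splitting the event $|I_1\r^\i(x,t)|\ge\lambda$ into $|I_1\r^\i(x,t_j)|\ge\lambda/2$ (Chebyshev with \eqref{44b} gives $\lesssim\lambda^{-p}$) and $|g_j|\ge\lambda/2$ (Chebyshev with the \emph{global} $L^q$ bound \eqref{44a} gives $\lesssim(\Delta t)^{\theta q}\lambda^{-q}$). No residual region appears, because the second event is measured on all of $\R^n$. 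Second, the resulting bound $\sigma(\lambda)\lesssim\lambda^{-p}+(\Delta t)^{\theta q}\lambda^{-q}$ is only useful if $(\Delta t)^{\theta q}\lambda^{p-q}$ stays bounded, which forces $\Delta t\sim\lambda^{-(p-q)/(\theta q)}$: one needs arbitrarily fine time steps as $\lambda\to\infty$. A single scale $h=r$ cannot serve all $\lambda$; the paper therefore builds $G_r^c$ from an \emph{infinite dyadic} mesh, with the cylinder over a grid point $t_j$ at dyadic level $i$ given height $h_j=r2^{-i}$, so that $\sum_j h_j^{1+\beta}\le Tr^\beta\sum_{i\ge 0}2^{-i\beta}<\infty$ while every coarser sub-mesh at level $N(\lambda)$ is already contained in $G_r^c$. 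This multi-scale construction, with the level chosen as a function of $\lambda$, is the missing mechanism that converts the weak $L^q$ temporal continuity into a weak-$L^p$ (hence strong $L^s$ for $s<p$) space-time bound.
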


Proposition \ref{p44} ensures the uniform bound \eqref{440} of Hausdorff $(\Gamma(n)+\alpha)-$ premeasure of exceptional sets in $t$. The conditions \eqref{44a}-\eqref{44b} are guaranteed by Lemma \ref{L44} and Lemma \ref{L52} respectively. As a result, Theorem \ref{4m} tells that on the set where $\r$ decays faster (or concentrates slowly), the space-time integrability of $I_1\r$ could be improved in $L^s$ with $s>n$.
 
\begin{proof}
The strategy is to estimate the restricted distribution functions of the form
$$\sigma(\lambda)=m_{n+1}\{(x,t)\in \R^n\cap [0,T]: I_1[\r^\i 1_{G_r(t)}](x,t)\ge \lambda\}$$
where $m_d(\cdot)$ denotes d-dimensional Lebesgue measure and $G_r(t)$ denotes the cross section of $G_r$ at time $t$.

We shall show that $\sigma\le c\lambda^{-p}$ by estimating a Riemann sum over a finite partition by $\{t_j: 1\le j\le m 2^N\}$ with equal length where $m$ and $N$ are integers chosen in an appropriate fashion later:
\begin{equation}\label{4401}
\sigma(\lambda)=\int_0^T f(t,\lambda)dt\le \sum_j\sup\{|f(t,\lambda)|:t\in I_j\}\D t
\end{equation}
where
$$f(t,\lambda)=m_n\{x\in \R^n:I_1[\r^\i1_{G_r(t)}](x,t)\ge \lambda\}\quad\textrm{and}\quad I_j=\{t: |t-t_j|\le \D t\}.$$
Precisely, the mesh consists of $m2^N$ points $t_j$ that partition $[0,T]$ into intervals of equal length $\D t$.

We construct the exceptional set  $G_r$ as follow. Fix $r$ and select a point $t_j$ of the mesh. Consider the spatial exceptional set $F_j^c=F_r^c(t_j)$. Notice that the Hausdorff $\alpha$-order premeasure of $F_r^c$ is bounded uniformly with respect to $j$ and $r$ according to \eqref{440}. Without loss of generality we may assume that $F_j^c$ is a countable union of balls $B_{j,k}\subset\R^n$ with radii $r_{j,k}$ satisfying
$$\sum r_{j,k}^{\Gamma(n)+\alpha}\le c(r)\quad\textrm{and}\quad r_{j,k}\le r.$$
Associated with each ball we define a cylinder with height $h_j$:
$$C_{j,k}=B_{j,k}\times \{t:|t-t_j|< h_j/2\}.$$
We define the space-time exceptional set $G_r^c$ to be a union of cylinders, namely
\begin{equation}\label{G}
G_r^c=\bigcup_{j,k}C_{j,k}
\end{equation}
and require that the heights satisfy the following inequalities
$$\sum h_j^{1+\beta}\le c\quad \textrm{and}\quad h_j\le r$$
where $c$ is a fixed constant. We observe that the cylinder premeasure of $G_r^c$ is bounded uniformly with respect to $r$, which further implies that the cylinder premeasure satisfies the following inequality
$$\mathcal{H}^{\Gamma(n)+\alpha,1+\beta}(G_r^c)\le cc(r)\coloneqq c_1(r).$$

In order to obtain the desired upper bound on $\sigma$ we shall first show that 
\begin{equation}\label{4403}
\sigma(\lambda)\le cT\lambda^{-p}+cT(\D t)^{\theta_0 q}\lambda^{-q}.
\end{equation}
To this end, we shall show that
\begin{equation}\label{4402}
f(t,\lambda)\le A_j+B_j\quad\textrm{if}\quad t\in I_j,
\end{equation}
where
$$A_j=c\lambda^{-p}\quad\textrm{and}\quad B_j=c|t-t_j|^{\theta_0 q}\lambda^{-q}.$$
Substitution of the pointwise bounds into the Riemann sum \eqref{4401} yields the desired \eqref{4403}.

In order to prove \eqref{4402}, we first observe that if $t$ lies in $I_j$ then the $t-$section of $G_r^c$ contains $F_j^c$. Then the nonnegative of $\r^\i$ implies 
$\r^\i 1_{G_r(t)}\le \r^\i 1_{F_j}$, and hence
$$f(t,\lambda)\le m_n\{x\in \R^n: I_1[\r^\i 1_{F_j}]\ge \lambda\}\quad\textrm{if}\quad t\in I_j.$$
The triangle inequality implies that
\begin{equation*}
\begin{split}
f(t,\lambda)&\le m_n\{x\in \R^n: I_1[\r^\i(\cdot,t_j)1_{F_j}]>\lambda/2\}\\
&\quad+m_n\{x\in \R^n: |I_1[\r^\i(x,t)1_{F_j}]-I_1[\r^\i(x,t_j)1_{F_j}]|>\lambda/2\}
\end{split}
\end{equation*}
and therefore \eqref{4402} holds with
$$A_j=m_n\{x\in \R^n: I_1[\r^\i(x,t_j)1_{F_j}]>\lambda/2\}$$
and
$$B_j=m_n\{x\in \R^n: |I_1[\r^\i(x,t)1_{F_j}]-I_1[\r^\i(x,t_j)1_{F_j}]|>\lambda/2\}\quad\textrm{if}\quad t\in I_j.$$
The restricted $L^p$ bound \eqref{44b} yields the desired bound on the spatial distribution function of the restriction of $I_1[\r^\i(\cdot, t_j) 1_{F_j}]$ to $F_j$ and hence on $A_j$, while the temporal H\"{o}lder estimate \eqref{44a} yields the desired bound in $B_j$ since $1_{F_j}=\prod_{k=1}^\infty 1_{B_{j,k}^c}\in W^{1,\infty}(\R^n)$ due to the construction of $F_j$. This verifies the estimate \eqref{4402}.

Finally, the integer $N$ is chosen so that the factor $(\D t)^{\alpha q}$ in \eqref{4403} compensates for the large term $\lambda^{-q}$ on the right side of \eqref{4403}. We rewrite \eqref{4403} in the form
$$\sigma\le cT\lambda^{-p}+cT\lambda^{-p}\pi$$
where
$$\pi=(\D t)^{\theta_0 q}\lambda^{p-q}=(2^{-N-1}T)^{\theta_0 q}\lambda ^{p-q}.$$
We fix $\lambda$ and choose $N$ sufficiently large to guarantee that $\pi$ is dominated by a fix constant.

It is convenient to regard the set of points $t_j$ as a union of a sequence of finite meshes $M_k$ defined inductively as follows. Assume with loss of generality that $T/r$ is an integer and let $m=T/r$. Take $M_1$ to consist of $t_j=jr$ where $1\le j\le m$. Given $M_k$ define $M_{k+1}=M_k\cup C_k$ where $C_k$ denotes the collection of midpoints of the subintervals of $(0,T)$ determined by $M_k$. Notice that $M_{k+1}$ contains twice as many points as $M_k$. Thus $M_k$ consists of $m2^k$ points $t_j$, $1\le j\le m2^j$, which partition $(0,T)$ onto intervals of equal length and may be regarded as a disjoint union, $M_k=\cup_{i=0}^{k-1}C_i$ by taking $C_0=M_1$.

If $t_j$ lies in $C_i$, define the corresponding $h_j$ to be $r2^{-i}$, that is the distance between consecutive points of $C_i$. Thus the sum of the heights associated with points of $C_i$ equals $T$ and 
\begin{equation*}
\begin{split}
\sum h_j^{1+\beta}&=\sum_{i}\sum_{t_j\in C_i}h_j^{1+\beta}\le \sum_{i}(r2^{-i})^\beta\sum_{t_j\in C_i}h_j\\
&\le T\sum_{i}(r2^{-i})^\beta\le c,
\end{split}
\end{equation*}
provided that $\beta>0$.
\end{proof}

\bigskip\bigskip


\section{Proof of Theorem \ref{mt2}}

In the previous two sections, we constructed an exceptional space-time set $G^c_r$ such that on $G_r$ the Riesz potential of the density has higher integrability such as $L^s$ for some $s>n$ and the exceptional space-time set is constructed as a subset of the complement of sets where the radial distribution functions of density, $\mathcal{M}\r(s,x,t)=\|\r(\cdot, t)\|_{L^1(B_s(x))}$, decay faster. In this section, based on this higher integrability of the Riesz potential of the density, we further improve the integrability of the density itself in $L^{\gamma+\lambda}$ on the same space-time set $G_r$ for some $\lambda>0$. This improvement, combined with the classical convexity method (\cite{PL, F, NP}), yields the strong convergence of $\r^\i$, and hence the part of Theorem \ref{mt2} on the Hausdorff dimension of the reduced defect measure follows. 

\subsection{Improved density estimates}

On the set $G_r$, the higher integrability of the Riesz potential of the density further improves the integrability of the density itself due to the moment equation. Precisely, there holds true
\begin{Proposition}\label{p3}
Let $G_r$ be the set constructed in Theorem \ref{4m}. There is a positive number $\lambda>0$ such that
$$\|\r^{\gamma+\lambda}\|_{L^1(G_r\cap (B_R\times(0,T)))}\le C.$$
Here the constant $\lambda$ depends on the parameter $n, \gamma$ in the way that
\begin{equation}\label{50a}
0<\lambda\le \min\left\{\f{\gamma}{n+1},\f{(\tau-2)\gamma}{2\tau}\right\},
\end{equation}
where $\tau$ is given by the identity \eqref{52a}.
\end{Proposition}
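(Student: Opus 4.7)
The plan is to adapt the classical Lions-Feireisl pressure-improvement test function argument, now localised to the space-time set $G_r$ where Theorem \ref{4m} supplies the improved bound $\|I_1\r^\i\|_{L^s(G_r)}\le C$ with $s>n$. Fix a smooth cutoff $\psi\in C_c^\infty(G_r\cap(B_R\times(0,T)))$ and, in the weak form of the momentum equation, test against the vector field
$$\phi^\i(x,t)=\psi(x,t)\,\nabla(-\Delta_x)^{-1}\bigl(\psi(x,t)(\r^\i)^\l(x,t)\bigr).$$
The elliptic identity $-\Dv\phi^\i=\psi^2(\r^\i)^\l+O(|\nabla\psi|)$ converts the pairing of the pressure gradient $\nabla(\r^\i)^\gamma$ with $\phi^\i$ into the principal term
$$\int_0^T\!\!\int_{\R^n}\psi^2(\r^\i)^{\gamma+\l}\,dx\,dt$$
plus controllable remainders. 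A standard exhaustion $\psi\nearrow 1_{G_r\cap(B_R\times(0,T))}$ at the end yields the claimed bound.

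All remaining terms of the momentum equation must be estimated uniformly in $\i$. The viscous pair $-\mu\Delta\u^\i-\xi\nabla\Dv\u^\i$ tested against $\phi^\i$ is handled by the energy bound on $\nabla\u^\i$ and by $L^p$-boundedness of $\nabla\phi^\i$ viewed as a Riesz-type operator acting on $\psi(\r^\i)^\l$. The time-derivative contribution $\int\r^\i\u^\i\cdot\partial_t\phi^\i$ is rewritten using the continuity equation $\partial_t\r^\i=-\Dv(\r^\i\u^\i)$ to trade the time derivative for a spatial divergence, producing bilinear pairings of $\r^\i\u^\i$ with singular integrals of $\r^\i\u^\i$ that close under Hölder using the $L^\infty_tL^2_x$ control of $\sqrt{\r^\i}\u^\i$ coming from \eqref{40}. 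Cutoff commutator terms generated by derivatives of $\psi$ are absorbed using the mass-level Riesz-potential bound in Lemma \ref{L31}.

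The main obstacle is the convective term $\int(\r^\i\u^\i\otimes\u^\i):\nabla\phi^\i$. The key ingredient here is the embedding $W^{1,p}\hookrightarrow L^2(\r^\i\,dx)$ of Proposition \ref{p2}, whose decay hypothesis on the density maximal function holds uniformly on $G_r$ by the very construction of $G_r$ in Theorem \ref{4m}. This embedding allows one to bound the kinetic-type density $\r^\i|\u^\i|^2$ in an $L^1_t$ norm of the form $\|\u^\i\|_{W^{1,p}}^2$ for a suitable $p\in(1,2)$, after which the pairing against $\nabla\phi^\i$ closes by Hölder using the improved $\|I_1\r^\i\|_{L^s(G_r)}$ with $s>n$. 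The two explicit constraints in \eqref{50a} emerge from balancing the exponents in these Hölder chains: the first restricts $\l$ so that $(\r^\i)^\l$ enters the Riesz calculus with an integrability matched to $s>n$ on the pressure side, while the second, $\l\le(\tau-2)\gamma/(2\tau)$, arises by interpolating $(\r^\i)^{\gamma+\l}$ between the starting $L^\gamma$ mass bound and the higher integrability gained in the Riesz-potential step, with $\tau$ the exponent appearing in \eqref{52a}.
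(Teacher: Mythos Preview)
Your overall framework is right: testing the momentum equation against $\psi\mathcal{A}_j((\r^\i)^\lambda)$ and using the renormalised continuity equation for $(\r^\i)^\lambda$ is exactly the paper's approach. But your treatment of the convective term has a genuine gap, and your attribution of the two constraints in \eqref{50a} is inverted.

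The convective contribution cannot be closed by Proposition~\ref{p2} alone. After you insert the continuity equation for $\partial_t(\r^\i)^\lambda$, the time-derivative piece produces a term $-\int\r^\i u_i^\i\,\partial_j\mathcal{A}_i[u_j^\i(\r^\i)^\lambda]$ which, combined with the original convective pairing $\int\r^\i u_i^\i u_j^\i\,\partial_j\mathcal{A}_i((\r^\i)^\lambda)$, has a commutator (Div--Curl) structure: see \eqref{51}. Only after this cancellation can one integrate by parts to move a full gradient onto $\u^\i$, reducing the estimate to $\|\nabla\u^\i\|_{L^2}\cdot\|\F^\i\|_{L^{2s}_tL^\tau_x}\cdot\|(\r^\i)^\lambda\|_{L^\infty_tL^{2\tau/(\tau-2)}_x}$, where $\F^\i$ is the Hodge potential of $\psi\r^\i\u^\i$. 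The middle factor is controlled by Lemma~\ref{L51}, which is where the improved bound $\|I_1(\psi\r^\i)\|_{L^s(G_r)}$ actually enters. Your sketch---bound $\r^\i|\u^\i|^2$ via Proposition~\ref{p2} and then ``pair against $\nabla\phi^\i$ by H\"older''---fails because $\r^\i|\u^\i|^2$ is only in $L^\infty_tL^1_x$ while $\nabla\phi^\i$ is merely in $L^{\gamma/\lambda}$, and the product does not close; there is no room without the Div--Curl cancellation.

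Proposition~\ref{p2} is used, but for the \emph{other} term generated by the renormalised continuity equation, namely $I_4=(\lambda-1)\int\r^\i u_i^\i\,\mathcal{A}_i[(\r^\i)^\lambda\Dv\u^\i]$; here both $\u^\i\in H^1$ and $A^\i:=\mathcal{A}_i[(\r^\i)^\lambda\Dv\u^\i]\in W^{1,p}$ with $p=2\gamma/(\gamma+2\lambda)<2$, so Proposition~\ref{p2} applies to $\int\r^\i|\u^\i|^2$ and $\int\r^\i|A^\i|^2$ separately. Correspondingly, the two constraints in \eqref{50a} arise in the opposite places from what you state: the first inequality is exactly what makes $\omega_p(s)$ finite in the $I_4$ estimate (it forces $(1-n)/(p-1)+\gamma(n)+\alpha>-1$), while the second inequality $\lambda\le(\tau-2)\gamma/(2\tau)$ is what guarantees $(\r^\i)^\lambda\in L^{2\tau/(\tau-2)}$ in the $I_1$ (Div--Curl) estimate.
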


This and next two subsections are devoted to the proof of Proposition \ref{p3}. First of all we recall the renormalised continuity equation for $b(y)=y^\lambda$ (see \cite{PL, F, NP})
\begin{equation}\label{50}
\begin{split}
\partial_t\r^\lambda+\Dv(\u \r^\lambda)+(\lambda-1) \r^\lambda\Dv\u=0
\end{split}
\end{equation}
in the sense of distributions.

Due to the construction of $G_r$ in \eqref{G}, $G_r^c$ is a countable union of open cylinders. Hence $G_r\cap \ov{B_R\times(0,T/2)}$ and $G_r\cap [B_{2R}\times (0,T)]^c$ are disjoint closed sets. Using Urysohn's lemma as in \eqref{u}, there exists a Lipschitz continuous function $0\le \psi\le 1$ such that $\psi$ is equal to one in $G_r\cap \ov{B_R\times (0,T/2)}$ and vanishes in $G_r\cap [B_{2R}\times(0,T)]^c$.
Denote
$$\mathcal{A}_j(f)=\partial_j(-\D)^{-1}f.$$
The operator $\mathcal{A}$ has the property (see \cite{F, NP}): $$\|\mathcal{A}_i(f)\|_{W^{1,p}(\R^n)}\le C\|f\|_{L^p(\R^n)}.$$
Multiplying the momentum equation by $\psi\mathcal{A}_j(\r^\lambda)$, we obtain
\begin{equation}\label{pes}
\begin{split}
\int_0^T\int_{\R^n}\psi \r^{\gamma+\lambda}dxdt =\sum_{i=1}^4I_i +\sum_{j=1}^6J_j,
\end{split}
\end{equation}
where 
\begin{equation*}
\begin{split}
I_1&=\int_0^T\int_{\R^n}\Big[\r\u_i\u_j \partial_j\mathcal{A}_i(\r^\lambda)-\r\u_i\partial_j\mathcal{A}_i[\u_j\r^{\lambda}] \Big]\psi dxdt\\
I_2&=-\mu\int_0^T\int_{\R^n}\partial_j\u^i \partial_j\mathcal{A}_i(\r^\lambda) \psi dxdt\\
I_3&=-\xi\int_0^T\int_{\R^n}\Dv\u \partial_i\mathcal{A}_i(\r^\lambda)\psi dxdt\\
I_4&=-(\lambda-1)\int_0^T\int_{\R^n}\r\u_i\mathcal{A}_i\left[\r^\lambda\Dv\u\right] \psi dxdt,\\
&
\end{split}
\end{equation*}
and
\begin{equation*}
\begin{split}
J_1&=\int_0^T\int_{\R^n}\partial_j\psi \mathcal{A}_j(\r^\lambda)\r^\gamma dxdt\\
J_2&=\int_0^T\int_{\R^n}\r\u_i\u_j \mathcal{A}_j(\r^\lambda)\partial_i\psi dxdt\\
J_3&=\int_0^T\int_{\R^n}\partial_t\psi \r\u_j\mathcal{A}_j(\r^\lambda)dxdt\\
J_4&=-\mu\int_0^T\int_{\R^n}\partial_j\u_i \mathcal{A}_i(\r^\lambda) \partial_j\psi dxdt\\
J_5&=-\xi\int_0^T\int_{\R^n}\Dv\u \mathcal{A}_i(\r^\lambda)\partial_i\psi dxdt.
\end{split}
\end{equation*}

The most difficult terms in the right hand side of \eqref{pes} are $I_1$ and $I_4$. The strategy to handle terms in the right hand side of \eqref{pes} goes as follows.
\begin{itemize}
\item For the first term $I_1$ in the right, the concentration cancellation occurs. More precisely,  after integration by parts, the first term can be rewritten as
\begin{equation*}
\begin{split}
I_1&=\int_0^T\int_{\R^n}\Big[\r\u_i\u_j \partial_j\mathcal{A}_i(\r^\lambda)-\r\u_i\partial_j\mathcal{A}_i[\u_j\r^{\lambda}]\Big]\psi dxdt\\
&=\int_0^T\int_{\R^n}\u_j\Big[\psi\r\u_i\partial_j\mathcal{A}_i(\r^\lambda)-\partial_j\mathcal{A}_i(\psi\r\u_i)\r^{\lambda}\Big] dxdt
\end{split}
\end{equation*}
and the factor of the integrand above has the following identity
\begin{subequations}\label{51}
\begin{align}
&\psi\r\u_i\partial_j\mathcal{A}_i(\r^\lambda)-\partial_j\mathcal{A}_i(\psi\r\u_i) \r^\lambda\nonumber\\
&\quad=\psi\r\u_i\partial_i\mathcal{A}_j(\r^\lambda)-\nabla\mathcal{A}_i(\psi\r\u_i)\cdot e_j\r^\lambda\nonumber\\
&\quad=\Big(\psi\r\u+\nabla(-\D)^{-1}\Dv(\psi\r\u)\Big)\cdot\nabla(-\D)^{-1}\Dv(e_j\r^\lambda)\label{51a}\\
&\qquad-\nabla(-\D)^{-1}\Dv(\psi\r\u)\cdot \Big(e_j\r^\lambda+\nabla(-\D)^{-1}\Dv(e_j\r^\lambda)\Big),\label{51b}
\end{align}
\end{subequations}
where $\{e_j\}_{j=1}^n$ is the classical basis of $\R^n$.
The advantage of the term \eqref{51a} and the term \eqref{51b} is that both of them satisfy the classical Div-Curl structure.
\item For the fourth term $I_4$, an integration by parts yields
$$I_4=(\lambda-1)\int_0^T\int_{\R^n}\mathcal{A}_i[\psi\r\u_i]\r^\lambda \Dv\u dxdt,$$ and an improved estimate for Riesz potentials of the momentum, $\mathcal{A}_i[\psi\r\u_i]$, will be established in Lemma \ref{L51}.
\item All other terms in \eqref{pes} can be dealt straightforwardly by \textit{a priori} estimates \eqref{energy1}.
\end{itemize}

To start with, the estimates for $J_j (1\le j\le 5)$ are demonstrated as follows. For $J_1$ and $J_2$, one has
\begin{equation*}
\begin{split}
|J_1|+|J_2|&\le CT\Big(\|\r|\u|^2\|_{L^\infty(0,T; L^1(\R^n))}+\|\r^\gamma\|_{L^\infty(0,T; L^1(\R^n))}\Big)\|\mathcal{A}_j(\r^\lambda)\|_{L^\infty(B_{2R}\times(0,T))}\\
&\le CT\Big(\|\r|\u|^2\|_{L^\infty(0,T; L^1(\R^n))}+\|\r^\gamma\|_{L^\infty(0,T; L^1(\R^n))}\Big)\|\mathcal{A}_j(\r^\lambda)\|_{L^\infty(0,T; W^{1, n+1}(B_{2R}))}\\
&\le CT\Big(\|\r|\u|^2\|_{L^\infty(0,T; L^1(\R^n))}+\|\r^\gamma\|_{L^\infty(0,T; L^1(\R^n))}\Big)\|\r\|_{L^\infty(0,T; L^\gamma(\R^n))}<\infty
\end{split}
\end{equation*}
since $\lambda(n+1)\le \gamma$. The term $J_3$ can be estimated similarly as $J_1$ since $2|\r\u|\le \r+\r|\u|^2$. Moreover, terms $J_4$ and $J_5$ have the bound
\begin{equation*}
\begin{split}
|J_4|+|J_5|&\le C\|\nabla\u\|_{L^2(\R^2\times(0,T))}\|\mathcal{A}_i(\r^\lambda)\|_{L^2(B_{2R}\times(0,T))}\\
&\le C\sqrt{T}\|\nabla\u\|_{L^2(\R^2\times(0,T))}\|\mathcal{A}_i(\r^\lambda)\|_{L^\infty(0,T; L^2(B_{2R}))}\\
&\le C\sqrt{T}\|\nabla\u\|_{L^2(\R^2\times(0,T))}\|\r\|_{L^\infty(0,T; L^\gamma(\R^n))}<\infty
\end{split}
\end{equation*}
since $2\lambda<(n+1)\lambda\le \gamma$.

The estimations for $I_2$ and $I_3$ are also quite straightforward. Indeed,
for $I_2$, due to the fact $\|\nabla\mathcal{A}_i(f)\|_{L^2(\R^n)}\le C\|f\|_{L^2(\R^n)}$ and $2\lambda\le \gamma$, one has
\begin{equation*}
\begin{split}
|I_2|+|I_3|&\le C\|\nabla\u\|_{L^2((0,T)\times\R^n)}\|\r^\lambda\|_{L^2(B_{2R}\times (0,T))}\\
&\le C\sqrt{T}\|\nabla\u\|_{L^2((0,T)\times\R^n)}\|\r\|_{L^\infty(0,T;L^\gamma(\R^n))}<\infty.
\end{split}
\end{equation*}

In order to handle the terms $I_1$ and $I_4$, we need to introduce 
the function $\F=(\F_1,\F_2)$ where the functions $\F_1$ and $\F_2$ stand for the Hodge decomposition of $\psi\r\u$ for the function $0\le \psi \le1$ in \eqref{pes}; that is 
$$\nabla \F_1+\Cu\F_2=\psi\r\u \quad\textrm{in}\quad \R^n.$$
It is easily seen that
$$\F_1=\Dv(-\D)^{-1}(\psi\r\u)\quad\textrm{and}\quad \F_2=-\Cu(-\D)^{-1}(\psi\r\u).$$
The following auxiliary lemma gives a $L^r$ estimate with $r>2$ for the vector-valued function $\F$ in terms of the energy $E(\r(t),\u(t))$ and the Riesz potential of the density.

\begin{Lemma}\label{L51}
For $\dl\in(0,1-n/s)$, there is a constant $C(\dl)$ such that
\begin{equation*}
\begin{split}
\|\F\|_{L^{2s}(0,T; L^\tau(\R^n))}&\le C(\dl)\Big(\|\r|\u|^2\|_{L^\infty(0,T; L^1(\R^n))}\left[1+\| I_1(\r 1_{G_r})\|_{L^s(\R^n\times[0,T])}\right]\\
&\qquad+\|\r^\gamma\|_{L^\infty(0,T; L^1(\R^n))}\Big),
\end{split}
\end{equation*}
where
\begin{equation}\label{52a}
\f{1}{\tau}=\f{n-1+\dl}{2n}+\f{1}{2s}\quad\textrm{with}\quad s>n\quad \textrm{and}\quad \tau>2.
\end{equation}
\end{Lemma}
The parameter $\dl$ is chosen in $(0, 1-n/s)$ to make sure $\tau>2$.
\begin{proof}
It is easy to see that
$$\F_1(x)=\f{1}{n\varpi_n}\int_{\R^n}\f{x-y}{|x-y|^{n}}\cdot (\psi\r\u)(y)dy\quad\textrm{and}\quad \F_2(x)=\f{1}{n\varpi_n}\int_{\R^n}\f{x-y}{|x-y|^{n}}\times(\psi\r\u)(y)dy.$$

\texttt{Case 1: $|x|>4$.} On one hand, one notices that
$$|\F(x)|\le C\int_{\R^n}\f{1}{|x-y|^{n-1}}\psi\r|\u|(y)dy.$$
As $\gamma>1$, the energy estimate \eqref{energy1} gives $\r\u\in L^\infty(0,T; L^{\f{2\gamma}{\gamma+1}}(\R^n))$, and hence $$I_{1}(\r\u)\in L^\infty(0,T; L^{\alpha_1}(\R^n))\quad\textrm{with}\quad
\f{1}{\alpha_1}=\f{\gamma+1}{2\gamma}-\f{1}{n},$$ which further implies $\F(x,t)\in L^{2s}(0,T; L^{\alpha_1}(B_{4}^c))$. 

On the other hand if $|y|\le 2$, then $|x-y|\ge 2$. Since $\textrm{Supp}\psi\subset B_{2}\times[0,T]$, there follows 
\begin{equation*}
\begin{split}
|\F(x,t)|&\le C\int_{B_{2}}\f{1}{|x-y|^{n-1}}\r|\u|dy\le C\int_{B_{2}}\f{1}{|x-y|^{n-2}}\r|\u| dy\\
&\le C\int_{\R^n}\f{1}{|x-y|^{n-2}}\r|\u| dy=CI_{2}(\r|\u|).
\end{split}
\end{equation*}
Sobolev's embedding theorem implies $$I_{2}(\r\u)\in L^\infty(0,T; L^{\alpha_2}(\R^n)\quad\textrm{with}\quad \f{1}{\alpha_2}=\f{\gamma+1}{2\gamma}-\f{2}{n}.$$ which further implies $\F(x,t)\in L^{2s}(0,T; L^{\alpha_2}(B_{4}^c))$. As $\gamma\in (1,\f{n}{2}]$, one has $\alpha_1\le 2<\alpha_2$, and the desired estimate $\F(x,t)\in L^{2s}(0,T; L^\tau(B_{4}^c))$ follows from the interpolation. For $\gamma=1$, a similar interpolation argument works with Lebesgue spaces replaced by weak Lebesgue spaces.

\texttt{Case 2: $|x|\le 4$.} Since $\textrm{Supp}\psi\subset G_r$, one has $\r\le \r 1_{G_r}$ and
\begin{equation}\label{52}
|\F(x,t)|\le C\int_{\R^n}\f{1}{|x-y|^{n-1}}\psi\r|\u|(y,t)dy\le C\sqrt{\mathcal{L}(x,t)}\sqrt{I_1(\r 1_{G_r}) (x,t)},
\end{equation}
where
$$\mathcal{L}(x,t)=\int_{\R^n}|x-y|^{1-n}\psi\r|\u|^2(y)dy.$$
Since $\textrm{Supp}\psi\subset B_{2}\times[0,T]$, one has
$$\mathcal{L}(x,t)=\int_{B_{2}}(\psi\r|\u|^2)^{\f{n-1+\dl}{n}}|x-y|^{-(n-1)-\f{\dl}{n}}(\psi\r|\u|^2)^{\f{1-\dl}{n}}|x-y|^{\f{\dl}{n}}dy,$$
and hence
$$\mathcal{L}(x,t)\le C\int_{B_{2}}(\psi\r|\u|^2)^{\f{n-1+\dl}{n}}|x-y|^{-(n-1)-\f{\dl}{n}}(\psi\r|\u|^2)^{\f{1-\dl}{n}}dy\quad\textrm{for}\quad x\in B_{4}.$$
Applying the H\"{o}lder inequality gives
$$\mathcal{L}(x,t)\le C\left(\int_{B_{2}}\psi\r|\u|^2|x-y|^{\f{-n(n-1)-\dl}{n-1+\dl}}dy\right)^{\f{n-1+\dl}{n}}\left(\int_{B_{2}}\psi\r|\u|^2dy\right)^{\f{1-\dl}{n}}\quad\textrm{for}\quad x\in B_{4},$$
and thus there follows from Fubini's theorem that
\begin{equation}\label{53}
\begin{split}
\int_{B_{4}}\mathcal{L}(x,t)^{\f{n}{n-1+\dl}}dx&\le C\left(\int_{B_{2}}\psi\r|\u|^2dy\right)^{\f{1-\dl}{n-1+\dl}}\int_{B_{4}}\int_{B_{2}}\psi\r|\u|^2(y,t)|x-y|^{\f{-n(n-1)-\dl}{n-1+\dl}}dydx\\
&\le C\left(\int_{B_{2}}\psi\r|\u|^2dy\right)^{\f{n}{n-1+\dl}}\le C(\dl)\|\r|\u|^2\|_{L^1(\R^n)}^{\f{n}{n-1+\dl}}
\end{split}
\end{equation}
since $\int_{B_{4}}|x-y|^{\f{-n(n-1)-\dl}{n-1+\dl}}dx\le C(\dl)$.

Combining \eqref{52} and \eqref{53} yields the desired estimate as
\begin{equation*}
\begin{split}
\|\F\|_{L^{2s}(0,T; L^{\tau}(B_{4}))}&\le C(\dl) \|\mathcal{L}\|^{\f12}_{L^\infty(0,T; L^{\f{n}{n-1+\dl}}(B_{4}))}\|I_1(\r 1_{G_r})\|^{\f12}_{L^s(\R^n\times[0,T])}\\
&\le C(\dl) \|\r|\u|^2\|^{\f12}_{L^\infty(0,T; L^{1}(\R^n))}\|I_1(\r 1_{G_r})\|^{\f12}_{L^s(\R^n\times[0,T])}
\end{split}
\end{equation*}
with $\tau$ being given by the identity in \eqref{52a}. 
\end{proof}

Now we are ready to estimate the terms $I_1$ and $I_4$. Recall that $I_1$ has the form
\begin{equation}\label{57}
I_1=\int_0^T\int_{\R^n}\u_j \eqref{51a} dxdt+\int_0^T\int_{\R^n}\u_j \eqref{51b}dxdt.
\end{equation}
Thanks to the identity
$$\Dv(A\times B)=B\cdot\Cu A-A\cdot\Cu B,$$
there follows
\begin{equation*}
\begin{split}
&\Big(\psi\r\u+\nabla(-\D)^{-1}\Dv(\psi\r\u)\Big)\cdot\nabla(-\D)^{-1}\Dv(e_j\r^\lambda)\\
&\quad=(-\D)^{-1}\Cu\Cu(\psi\r\u)\cdot\nabla(-\D)^{-1}\Dv(e_j\r^\lambda)\\
&\quad=\Dv\Big((-\D)^{-1}\Cu(\psi\r\u)\times\nabla(-\D)^{-1}\Dv(e_j\r^\lambda)\Big).
\end{split}
\end{equation*}
This identity, combined with integration by parts, implies
\begin{equation*}
\begin{split}
&\left|\int_0^T\int_{\R^n}\u_j \eqref{51a} dxdt\right|\\
&\quad=\left|\int_0^T\int_{\R^n}\nabla\u_j\cdot \Big((-\D)^{-1}\Cu(\psi\r\u)\times\nabla(-\D)^{-1}\Dv(e_j\r^\lambda)\Big)dxdt\right|,
\end{split}
\end{equation*}
and hence using Lemma \ref{L51} and Lemma \ref{L52} one gets
\begin{equation}\label{58}
\begin{split}
\left|\int_0^T\int_{\R^n}\u_j \eqref{51a} dxdt\right|&\le C(T,s)\|\nabla\u\|_{L^2((0,T)\times\R^n)}\|\F\|_{L^{2s}(0,T; L^\tau(\R^n))}\|\r^\lambda\|_{L^\infty(0,T; L^{\f{2\tau}{\tau-2}}(\R^n))}\\
&<\infty
\end{split}
\end{equation}
as $2\tau\lambda/(\tau-2)\le \gamma$ due to \eqref{50a}.

Similarly due to
\begin{equation*}
\begin{split}
&\nabla(-\D)^{-1}\Dv(\psi\r\u)\cdot \Big(e_j\r^\lambda+\nabla(-\D)^{-1}\Dv(e_j\r^\lambda)\Big)\\
&\quad=\nabla(-\D)^{-1}\Dv(\psi\r\u)\cdot (-\D)^{-1}\Cu\Cu(e_j\r^\lambda)\\
&\quad=\Dv\left[(-\D)^{-1}\Dv(\psi\r\u)\cdot (-\D)^{-1}\Cu\Cu(e_j\r^\lambda)\right],
\end{split}
\end{equation*}
an integration by parts yields
\begin{equation*}
\begin{split}
&\left|\int_0^T\int_{\R^n}\u_j \eqref{51b} dxdt\right|\\
&\quad=\left|\int_0^T\int_{\R^n}\nabla\u_j\cdot\left[(-\D)^{-1}\Dv(\psi\r\u)\cdot(-\D)^{-1}\Cu\Cu(e_j\r^\lambda)\right]dxdt\right|,
\end{split}
\end{equation*}
and hence using Lemma \ref{L51} and Lemma \ref{L52} one gets
\begin{equation}\label{59}
\begin{split}
\left|\int_0^T\int_{\R^n}\u_j \eqref{51b} dxdt\right|&\le C(T,s)\|\nabla\u\|_{L^2((0,T)\times\R^n)}\|\F\|_{L^{2s}(0,T; L^\tau(\R^n))}\|\r^\lambda\|_{L^\infty(0,T; L^{\f{2\tau}{\tau-2}}(\R^n))}\\
&<\infty.
\end{split}
\end{equation}
Combining \eqref{57}, \eqref{58}, and \eqref{59}, one obtains
\begin{equation*}
|I_1|\le C(T,s)\|\nabla\u\|_{L^2((0,T)\times\R^n)}\|\F\|_{L^{2s}(0,T; L^\tau(\R^n))}\|\r^\lambda\|_{L^\infty(0,T; L^{\f{2\tau}{\tau-2}}(\R^n))}<\infty.
\end{equation*}

Finally for $I_4$, an integration by parts gives
\begin{equation*}
\begin{split}
I_4&=(\lambda-1)\int_0^T\int_{\R^n}\mathcal{A}_i[\psi\r\u_i]\r^\lambda \Dv\u dxdt\\
&=(\lambda-1)\int_0^T\F_1\r^\lambda \Dv\u dxdt,
\end{split}
\end{equation*}
and thus
\begin{equation*}
\begin{split}
|I_4|\le C\|\F\|_{L^{2s}(0,T; L^\tau(\R^n))}\|\r^\lambda\|_{L^\infty(0,T; L^{\f{2\tau}{\tau-2}}(\R^n))}\|\Dv\u\|_{L^2((0,T)\times\R^n)}<\infty
\end{split}
\end{equation*}
as desired.

\subsection{Proof of Theorem \ref{mt2}}

We finish the proof of Theorem \ref{mt2} as an application of Theorem \ref{4m}.
\begin{proof}[Proof of Theorem \ref{mt2}]
The \textit{a priori} estimate \eqref{energy1} implies that up to a subsequence as $\i\rightarrow 0$ there holds
$$\u^\i\rightarrow\u\quad\textrm{weakly in}\quad L^2(0,T; H^1(\R^n))$$
and
$$\r^\i\rightarrow \r\quad\textrm{weakly$^*$ in}\quad L^\infty(0,T; L^\gamma(\R^n)).$$

The improved integrability of the density in Proposition \ref{p3} implies
\begin{equation}\label{59}
\|\r^\i\|_{L_{loc}^{\gamma+\lambda}(G_r)}\le c
\end{equation}
for some constant $c$ and $\lambda$ in \eqref{50a}. The bound \eqref{59}, combining with \textit{a priori} estimate \eqref{energy1}, gives that $\r^\i\u^\i$ is uniformly bounded in 
\begin{equation*}
\|\r^\i\u^\i\|_{L^{2(\gamma+\lambda)}(0,T; L^{2(\gamma+\lambda)/(1+\gamma+\lambda)}(G_r\cap B_R))}\le \|\r^\i|\u^\i|^2\|^{\f12}_{L^\infty(0,T;L^1(\R^n))}\|\r^\i\|^{\f12}_{L^{\gamma+\lambda}(G_r\cap (B_R\times[0,T]))}.
\end{equation*}
Moreover the continuity equation gives us the uniform bound $\partial_t\r^\i$ in $L^1(0,T; W^{-L,p})$ for some large $L>0$ and $p\in (1,\infty)$. This, combining Lemma 5.1 in \cite{PL} and the uniform bound of $\nabla\u^\i\in L^2(\R^n\times(0,T))$, implies that $\r^\i\u^\i$ converges to $\r\u$ at least in the sense of distributions in the set $G_r$. Moreover, with the help of the decay rate of the density on $G_r$ in Theorem \ref{4m}, a similar argument as that in Proposition \ref{p6} yields the convergence of
\begin{equation}\label{50ac}
\r^\i\u^\i\otimes\u^\i\rightarrow \r\u\otimes\u \quad\textrm{in}\quad G_r
\end{equation}
as $\i\rightarrow 0$ in the sense of distributions. The convergence \eqref{50ac} implies that the reduced defect measure $\theta$ is supported in $G_r^c$ for all $r>0$; that is
$$E=\textrm{Supp}(\theta)\subset G_r^c\quad\textrm{for all}\quad r>0.$$
Therefore there follows further that
$$\mathcal{H}^{\Gamma(n)+\alpha, 1+\beta}(E)\le \mathcal{H}^{\Gamma(n)+\alpha, 1+\beta}(G_r^c)\le c_1(r)\rightarrow 0\quad\textrm{as}\quad r\rightarrow 0,$$
and hence
$$\mathcal{H}^{\Gamma(n)+\alpha, 1+\beta}(E)=0.$$ 

The strong convergence of $\r^\i\rightarrow \r$ in $L^1(E^c)$ follows from the classical convexity argument in \cite{PL, F, NP}. Indeed, in $G_r$, we have now
\begin{equation}\label{510aa}
\begin{cases}
\partial_t\r+\Dv(\r\u)=0\\
\partial_t(\r\u)+\Dv(\r\u\otimes\u)-\mu\D\u-\xi\nabla\Dv\u+\nabla \ov{\r^\gamma}=0
\end{cases}
\end{equation}
in the sense of distributions, where $\ov{f}$ means the weak $L^1$ limit of $f^\i$ as $\i\rightarrow 0$. 

Multiplying the second equation of \eqref{510aa} by $\psi \mathcal{A}_i(\ov{\r^\lambda})$ yields a similar identity as \eqref{pes} with $(\r^\gamma,\r^\lambda)$ replaced by $(\ov{\r^\gamma}, \ov{\r^\lambda})$. Moreover the identity \eqref{pes} holds true with $(\r^\gamma,\r^\lambda)$ replaced by $((\r^\i)^\gamma, (\r^\i)^\lambda)$.
Subtracting the resulting identities for $(\ov{\r^\gamma}, \ov{\r^\lambda})$ and $((\r^\i)^\gamma, (\r^\i)^\lambda)$ yields
\begin{equation}\label{510aaa}
\begin{split}
&\int_{\R^n}(\ov{\r^\gamma}-(\mu+\xi)\Dv\u)\ov{\r^\lambda} \psi dxdt\\
&\quad=
\lim_{\i\rightarrow 0}\left\{\int_{\R^n}((\r^\i)^\gamma-(\mu+\xi)\Dv\u^\i)(\r^\i)^\lambda \psi dxdt+\sum_{i=1}^4 I_i+\sum_{j=1}^5 J_j\right\},
\end{split}
\end{equation}
where $\psi$ is the test function as in \eqref{pes}, and 
\begin{equation*}
\begin{split}
I_1&=\int_{\R^n}\Big[\r\u_i\u_j \partial_j\mathcal{A}_i(\ov{\r^\lambda})-\r\u_i\partial_j\mathcal{A}_i[\u_j\ov{\r^{\lambda}}] \Big] \psi dxdt\\
&\quad-\int_{\R^n}\Big[\r^\i\u^\i_i\u^\i_j \partial_j\mathcal{A}_i((\r^\i)^\lambda)-\r^\i\u^\i_i\partial_j\mathcal{A}_i[\u^\i_j(\r^\i)^{\lambda}] \Big] \psi dxdt\\
I_2&=-\mu\int_{\R^n}\partial_j\u \partial_j\mathcal{A}_i(\ov{\r^\lambda}) \psi dxdt+\mu\int_{G_r}\partial_j\u^\i \partial_j\mathcal{A}_i((\r^\i)^\lambda) \psi dxdt\\
I_3&=-\xi\int_{\R^n}\Dv\u \partial_i\mathcal{A}_i(\ov{\r^\lambda})\psi dxdt+\xi\int_{G_r}\Dv\u^\i \partial_i\mathcal{A}_i((\r^\i)^\lambda) \psi dxdt\\
I_4&=-(\lambda-1)\int_{\R^n}\r\u_i\mathcal{A}_i\left[\ov{\r^\lambda}\Dv\u\right] \psi dxdt+(\lambda-1)\int_{\R^n}\r^\i\u^\i_i\mathcal{A}_i\left[(\r^\i)^\lambda\Dv\u^\i\right] \psi dxdt,
\end{split}
\end{equation*}
and
\begin{equation*}
\begin{split}
J_1&=\int_0^T\int_{\R^n}\partial_j\psi \mathcal{A}_j(\ov{\r^\lambda})\ov{\r^\gamma} dxdt-\int_0^T\int_{\R^n}\partial_j\psi \mathcal{A}_j((\r^\i)^\lambda)(\r^\i)^\gamma dxdt\\
J_2&=\int_0^T\int_{\R^n}\r\u_i\u_j \mathcal{A}_j(\ov{\r^\lambda})\partial_i\psi dxdt-\int_0^T\int_{\R^n}\r^\i\u^\i_i\u^\i_j \mathcal{A}_j((\r^\i)^\lambda)\partial_i\psi dxdt\\
J_3&=\int_0^T\int_{\R^n}\partial_t\psi \r\u_j\mathcal{A}_j(\ov{\r^\lambda})dxdt-\int_0^T\int_{\R^n}\partial_t\psi \r^\i\u^\i_j\mathcal{A}_j((\r^\i)^\lambda)dxdt\\
J_4&=-\mu\int_0^T\int_{\R^n}\partial_j\u_i \mathcal{A}_i(\ov{\r^\lambda}) \partial_j\psi dxdt+\mu\int_0^T\int_{\R^n}\partial_j\u_i^\i \mathcal{A}_i((\r^\i)^\lambda) \partial_j\psi dxdt\\
J_5&=-\xi\int_0^T\int_{\R^n}\Dv\u \mathcal{A}_i(\r^\lambda)\partial_i\psi dxdt+\xi\int_0^T\int_{\R^n}\Dv\u^\i \mathcal{A}_i((\r^\i)^\lambda)\partial_i\psi dxdt.
\end{split}
\end{equation*}
The convergence of $I_1\rightarrow 0$ follows from the same strategy in subsections 5.2 and 5.3 using the identity \eqref{57} with the aid of the function $\F$ and Lemma 6.1 in \cite{F} (see also Lemma 4.25 in \cite{NP}); while the convergence of $I_i$ ($2\le i\le 4$) and $J_j$ ($1\le j\le 5$) to zero can be verified as those in Proposition \ref{p6}. 
Therefore letting $\i\rightarrow 0$ and suitably choosing the function $\psi$ in \eqref{510aaa} recover the weak continuity of \textit{effective viscous fluxes}, $\r^\gamma-(\lambda+\mu)\Dv\u$, in the compact subset $K\Subset G_r$ as
$$\int_{K}(\ov{\r^\gamma}-(\lambda+\mu)\Dv\u)\r^\lambda dxdt=
\lim_{\i\rightarrow 0}\int_{K}((\r^\i)^\gamma-(\lambda+\mu)\Dv\u^\i)(\r^\i)^\lambda dxdt.$$
The weak continuity above, combining the convexity argument and the concept of reformalized solutions, yields the strong convergence of the density in $K\Subset G_r$ (see \cite{PL, F, NP}); that is
$$\r^\i\rightarrow \r\quad\textrm{strongly in}\quad L^p(K)\quad\textrm{for all}\quad p< \gamma+\lambda$$
as $\i\rightarrow 0$. Therefore the limit function $(\r,\u)$ is a weak solution of the system \eqref{e1} in the sense of distributions in the set $E^c$.
\end{proof}

\subsection{Proof of Theorem \ref{mt1}}

We finish the proof of Theorem \ref{mt1} when the spherical Hausdorff dimension of the concentration set is less than one.

\begin{proof}[Proof of Theorem \ref{mt1}]
For $F\subset \R^n\times\R^+$, denote by $\pi$ the projection onto the time axis
$$\pi F=\{t\in \R: \exists (x,t)\in F\}.$$
By hypothesis there exists a number $\alpha$ with $0<\alpha<1$ and a family of closed set $\{F_r\}$ such that
\begin{equation}\label{30g1}
\theta(F_r)=0\quad \forall r>0,
\end{equation}
and
\begin{equation}\label{30g2}
\mathcal{H}^\alpha_r(F_r^c)\le C
\end{equation}
for some $C$ independent of $\alpha$ and $r$.

The identity \eqref{30g1} implies that
\begin{equation}\label{30g4}
\begin{split}
\lim_{\i\rightarrow 0}\int_0^T\int_{B_R} \r^\i|\u^\i-\u|^2dxdt&=\lim_{\i\rightarrow 0}\int\int_{F_r}\r^\i|\u^\i-\u|^2dxdt+\lim_{\i\rightarrow 0}\int\int_{F^c_r}\r^\i|\u^\i-\u|^2dxdt\\
&=\theta(F_r)+\lim_{\i\rightarrow 0}\int\int_{F^c_r}\r^\i|\u^\i-\u|^2dxdt\\
&=\lim_{\i\rightarrow 0}\int\int_{F^c_r}\r^\i|\u^\i-\u|^2dxdt.
\end{split}
\end{equation}
The bound on Hausdorff premeasure \eqref{30g2} implies that there exists a countable covering $\{B_{r_i}(x_i, t_i)\}$ of $F_r^c$ with $r_i\le r$ such that
$$F_r^c\subset \bigcup B_{r_i}(x_i,t_i),\qquad r_i\le r,\qquad \sum r_i^\alpha\le 2\mathcal{H}_r^\alpha(F_r^c)\le 2C.$$
Thus one has $\pi F_r^c\subset \cup \pi B_{r_i}(x_i,t_i)\subset\R^+$ and
\begin{equation}\label{30g3}
\begin{split}
m_1(\pi F_r^c)&\le m_1\Big(\bigcup \pi B_{r_i}(x_i,t_i)\Big)\le \sum m_1(\pi B_{r_i}(x_i,t_i))\\
&\le \sum r_i\le r^{1-\alpha}\sum r_i^\alpha\le 2C r^{1-\alpha}
\end{split}
\end{equation}
for $0<\alpha<1$.
This estimate, combined with the \textit{a priori} kinetic energy
$$\max_{0\le t\le T}\int_{B_R}\r^\i|\u^\i-\u|^2dx\le C,$$
implies
\begin{equation*}
\begin{split}
\int\int_{F_r^c}\r^\i|\u^\i-\u|^2dxdt&\le \int_{\pi F_r^c}\left(\int_{B_R}\r^\i|\u^\i-\u|^2dx\right) dt\\
&\le C(T,R)m(\pi F_r^c)\le C(T,R) r^{1-\alpha}.
\end{split}
\end{equation*}
This, combined with the identity \eqref{30g4}, yields
$$\lim_{\i\rightarrow 0}\int_0^T\int_{B_R}\r^\i|\u^\i-\u|^2dxdt\le C(T,R)r^{1-\alpha}\quad \textrm{with}\quad 0<\alpha<1$$
Because $r$ is arbitrary the left-hand side is zero. Hence we have strong convergence
$$\sqrt{\r^\i}(\u^\i-\u)\rightarrow 0 \quad\textrm{in}\quad L^2(B_R\times(0,T)).$$

\end{proof}

\bigskip\bigskip


\section{Appendix}
This appendix provides a detailed verification for the convolution of $|x|^{\f{p(1-n)}{2(p-1)}}$ in the proof of Proposition \ref{p2}.

\begin{Lemma}\label{la1}
Let $\f{2n}{n+1}<p\le 2$ and $g(x)=|x|^{\f{p(1-n)}{2(p-1)}}$ for $|x|\le s\le 1/2$ and $0$ otherwise. Then $\supp (g\star g)\subset \{x\in \R^n: |x|\le 2s\}$ and
\begin{equation*}
g\star g(x)\le \ov{G}(x)\equiv C(p,n)\begin{cases}
G(x)\quad&\textrm{if}\quad p=2\\
|x|^{\f{p-n}{p-1}}\quad&\textrm{if}\quad p\in (\f{2n}{n+1},2),
\end{cases}
\end{equation*}
where $G(x)$ is the fundamental solution of $-\D$ in $\R^n$.
\end{Lemma}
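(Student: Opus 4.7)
The plan is to split the claim into the trivial support bound and the pointwise inequality, reducing the latter to the classical Riesz composition formula in most of the parameter range and handling the borderline case $n=p=2$ by a separate direct computation. The support statement is immediate: if $g(y)g(x-y)\neq 0$ then $|y|\le s$ and $|x-y|\le s$, so the triangle inequality gives $|x|\le 2s$, whence $(g\star g)(x)=0$ for $|x|>2s$.

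For the pointwise bound, I would write $a:=\frac{p(n-1)}{2(p-1)}$ so that $g(y)=|y|^{-a}\chi_{\{|y|\le s\}}$, and use
$$(g\star g)(x)\le \int_{\R^n}|y|^{-a}|x-y|^{-a}\,dy.$$
An elementary computation shows that $a>n/2\iff n>p$, which holds for every $(p,n)$ in the stated range except $n=p=2$, and that $a<n\iff p>\frac{2n}{n+1}$, which is the natural range in which the right-hand side above converges. Assuming $n/2<a<n$, the scaling $y=|x|z$ identifies the integral as $C(n,a)|x|^{n-2a}$, with $C(n,a)=\int_{\R^n}|z|^{-a}|z-e|^{-a}\,dz$ finite by the two-sided constraint on $a$ and independent of the unit vector $e$ by rotational symmetry. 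The algebraic identities $n-2a=(p-n)/(p-1)$ for $p<2$ and $n-2a=2-n$ for $p=2$ then produce exactly the two branches of $\bar G(x)$ (matching $|x|^{2-n}$ with $G(x)$ for $n\ge 3$ up to the usual dimensional normalization).

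The remaining obstacle, and the main technical annoyance, is the borderline case $n=p=2$, where $a=1=n/2$ and the untruncated Riesz integral diverges logarithmically. Here I would decompose the integration domain into $\{|y|\le|x|/2\}$, $\{|x-y|\le|x|/2\}$, and the complement. On the first two pieces, bounding the larger factor below by $|x|/2$ and integrating the other over a ball of radius $|x|/2$ gives an $O(1)$ contribution. On the complementary piece both $|y|$ and $|x-y|$ exceed $|x|/2$, and since one factor is then comparable to the other, polar coordinates bound the integrand by $C|y|^{-2}$ on the annulus $|x|/2\le|y|\le s$, producing a factor $\log(s/|x|)$. The hypothesis $s\le 1/2$ then lets us absorb this into $-\log|x|$, which up to a constant is the fundamental solution $G(x)$ in two dimensions. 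Everything beyond this is bookkeeping, but matching the logarithmic scale exactly to the stated $G(x)$ is where the care is required.
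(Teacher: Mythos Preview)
Your proof is correct and takes a cleaner route than the paper's in the main regime. The paper keeps the truncation throughout and argues by case analysis on $|x|$: for $s/2\le|x|\le 2s$ it splits the integration domain at $|y|=s/4$, while for $|x|\le s/2$ it uses exactly the three-piece decomposition (near $0$, near $x$, and the complement) that you reserve for the logarithmic case. You instead drop the truncation and invoke the Riesz composition identity $\int_{\R^n}|y|^{-a}|x-y|^{-a}\,dy=C(n,a)\,|x|^{n-2a}$ directly via the scaling $y=|x|z$ whenever $n/2<a<n$, which delivers the required power in one stroke; the paper's approach is more hands-on but avoids any convergence check at infinity. For the borderline $n=p=2$ your argument and the paper's Case~2 essentially coincide. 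One shared caveat: both arguments tacitly require $a<n$, equivalently $p>2n/(n+1)$, for $g$ itself to be locally integrable; this does not cover the full stated range $1<p\le 2$, but it is satisfied in every instance where the lemma is applied in the paper.
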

\begin{proof}
Since $\supp g(x)=\{x:|x|\le s\}$, it follows 
$$g\star g(x)=\int_{\R^n}g(x-y)g(y)dy=\int_{|y|\le s}g(x-y)g(y)dy,$$
and hence $\supp (g\star g)\subset \{x: |x|\le 2s\}$.

\texttt{Case 1: $s/2\le |x|\le 2s$.} In this case, one has
\begin{equation*}
\begin{split}
g\star g(x)&=\int_{|y|\le s/4}g(x-y)g(y)dy+\int_{s/4\le |y|\le s}g(x-y)g(y)dy\\
&\le \int_{|y|\le s/4}|x-y|^{\f{p(1-n)}{2(p-1)}}|y|^{\f{p(1-n)}{2(p-1)}}dy+\int_{s/4\le |y|\le s}|x-y|^{\f{p(1-n)}{2(p-1)}}|y|^{\f{p(1-n)}{2(p-1)}}dy\\
&\le (s/4)^{\f{p(1-n)}{2(p-1)}}\int_{|y|\le s/4}|y|^{\f{p(1-n)}{2(p-1)}}dy+(s/4)^{\f{p(1-n)}{2(p-1)}}\int_{s/2\le |y|\le s}|x-y|^{\f{p(1-n)}{2(p-1)}}dy\\
&\le \begin{cases}
C\quad &\textrm{if}\quad p=2\quad\textrm{and}\quad n=2\\
C(p) s^{{\f{p-n}{p-1}}}\quad &\textrm{otherwise}
\end{cases}\\
&\le C(p,n)\begin{cases}
G(x)\quad&\textrm{if}\quad p=2\\
|x|^{\f{p-n}{p-1}}\quad&\textrm{if}\quad p\in (\f{2n}{n+1},2)
\end{cases}
\end{split}
\end{equation*}
since $|x|\le \f12$ and $p>\f{2n}{n+1}$.

\texttt{Case 2: $|x|\le s/2$.} Since $|x|\le s/2$, it follows that $\{y: |y|\le |x|/2\}\cup \{y:|x-y|\le |x|/2\}\subset \{y:|y|\le s\}.$ Set $A=\Big(\{y: |y|\le |x|/2\}\cup \{y:|x-y|\le |x|/2\}\Big)^c\cap \{y:|y|\le s\}$.  Note $A\subset \{|y|\le s: |y|\ge |x|/2\}$ and $A\subset\{|y|\le s: |x-y|\ge |x|/2\}$. Therefore
\begin{equation*}
\begin{split}
g\star g(x)&=\int_{|y|\le |x|/2}g(x-y)g(y)dy+\int_{|x-y|\le |x|/2}g(x-y)g(y)dy+\int_{A}g(x-y)g(y)dy\\
&\le \int_{|y|\le |x|/2}|x-y|^{\f{p(1-n)}{2(p-1)}}|y|^{\f{p(1-n)}{2(p-1)}}dy+\int_{|x-y|\le |x|/2}|x-y|^{\f{p(1-n)}{2(p-1)}}|y|^{\f{p(1-n)}{2(p-1)}}dy\\
&\quad+\int_{A}|x-y|^{\f{p(1-n)}{2(p-1)}}|y|^{\f{p(1-n)}{2(p-1)}}dy\\
&\le (|x|/2)^{\f{p(1-n)}{2(p-1)}}\int_{|y|\le |x|/2}|y|^{\f{p(1-n)}{2(p-1)}}dy+(|x|/2)^{\f{p(1-n)}{2(p-1)}}\int_{|x-y|\le |x|/2}|x-y|^{\f{p(1-n)}{2(p-1)}}dy\\
&\quad+\int_{\{|y|\le s:|x-y|\ge |x|/2\}}|x-y|^{\f{p(1-n)}{p-1}}dy+\int_{\{|y|\le s:|y|\ge |x|/2\}}|y|^{\f{p(1-n)}{p-1}}dy\\
&\le  \begin{cases}
C &\textrm{if}\quad p=2\quad\textrm{and}\quad n=2\\
C(p,n) |x|^{\f{p-n}{p-1}}\quad &\textrm{otherwise}
\end{cases}\\
&\le C(p,n)\begin{cases}
G(x)\quad&\textrm{if}\quad p=2\\
|x|^{\f{p-n}{p-1}}\quad&\textrm{if}\quad p\in (\f{2n}{n+1},2).
\end{cases}
\end{split}
\end{equation*}
\end{proof}

Next we turn to the verification of the inequality \eqref{in}.
\begin{proof}[Proof of the inequality \eqref{in}]
First of all, since $\gamma(n)$ and $\Gamma(n)$ are decreasing with respect to $\gamma$, the function $n-\Gamma(n)$ is increasing with respect to $\gamma$. Since as $\gamma=1$, 
$n-\Gamma(n)$ equals to $2$ if $n=2$ and equals to $6/5$ if $n=3$, it follows that $n-\Gamma(n)>1$ as $\gamma\in [1,n/2]$.

Next from the definition of $\Gamma(n)$, it follows that $n-\Gamma(n)\le n\gamma/(\gamma(n)+1)$.  Moreover the inequality $\gamma(n)<n-1$ implies that 
\begin{equation*}
\begin{cases}
\gamma(n)+1> n-\gamma(n-1-\gamma(n))\quad\textrm{if}\quad \gamma>1\\
\gamma(n)+1=n-\gamma(n-1-\gamma(n))\quad\textrm{if}\quad\gamma=1,
\end{cases}
\end{equation*}
and hence there follows
\begin{equation*}
n-\Gamma(n)
\begin{cases}
\le\f{n\gamma}{\gamma(n)+1}<\f{n\gamma}{n-\gamma(n-1-\gamma(n))}\quad\textrm{if}\quad \gamma>1\\
=\f{n\gamma}{\gamma(n)+1}=\f{n\gamma}{n-\gamma(n-1-\gamma(n))}\quad\textrm{if}\quad \gamma=1
\end{cases}
\end{equation*}
as claimed.
\end{proof}

\bigskip\bigskip


\section*{Acknowledgement}
The work is partially supported by the ECS grant 9048035. The author would like to thank Antony Novotn\'{y} for pointing out a mistake in Lemma  \ref{la1} in the previous version.

\bigskip\bigskip


\end{document}